   \newtheorem{theorem}{Theorem}
\newtheorem{lemma}[theorem]{Lemma}
\newtheorem{proposition}[theorem]{Proposition}
\newtheorem{corollary}[theorem]{Corollary}
\newtheorem{defn}[theorem]{Definition}
\def\beq{\begin{equation}}
\def\eeq{\end{equation}}
\def\dist{{\rm dist}}
\def\supp{{\rm supp}}
\def\bB{\mathbf{B}}
\def\bH{\mathbf{H}}
\def\bI{\mathbf{I}}
\def\bR{\mathbf{R}}
\def\bg{\mathbf{g}}
\def\bh{\mathbf{h}}
\def\bp{\mathbf{p}}
\def\bs{\mathbf{s}}
\def\by{\mathbf{y}}
\def\cA{\mathcal{A}}
\def\cB{\mathcal{B}}
\def\cC{\mathcal{C}}
\def\cF{\mathcal{F}}
\def\cH{\mathcal{H}}
\def\cK{\mathcal{K}}
\def\cM{\mathcal{M}}
\def\cO{\mathcal{O}}
\def\cR{\mathcal{R}}
\def\cS{\mathcal{S}}
\def\cT{\mathcal{T}}
\def\cV{\mathcal{V}}
\def\cW{\mathcal{W}}
\def\IR{{\mathbb R}}
\def\IT{{\mathbb T}}
\def\tM{\tilde{M}}
\def\tQ{\tilde{Q}}
\def\tF{\tilde{F}}
\def\hmu{\hat\mu}
\def\dist{\text{\rm dist}}
\newcommand{\eps}{\varepsilon}
\def\pQ{\partial Q}
\numberwithin{equation}{section}
\begin{document}

\title{Decay of correlations for billiards\\
 with flat points I: channel effect}
\author{ Hong-Kun Zhang$^{1}$ }

\date{\today}

\maketitle
\footnotetext[1]{ Department of Mathematics \& Statistics, University of Massachusetts Amherst, MA 01003; Email: hongkun@math.umass.edu}

\begin{abstract}
 In this paper we constructed a special	 family of semidispersing billiards bounded on a rectangle with a few dispersing scatterers. We assume there exists a pair of flat   points (with zero curvature) on the boundary of these scatterers, whose tangent lines form a channel in the billiard table that is perpendicular to the vertical sides of the rectangle.  The billiard can be induced to a Lorenz gas with  infinite horizon when replacing the rectangle by a torus.
We study the mixing rates of the one-parameter family of the semi-dispersing
billiards and  the Lorenz gas on a torus; and show that the
correlation functions of both  maps decay polynomially.

\end{abstract}
\section{Background and the main results.}

A billiard is a mechanical system in which a point particle moves
in a compact container $Q$ and bounces off its boundary $\pQ$. It
preserves a uniform measure on its phase space, and the
corresponding collision map (generated by the collisions of the
particle with $\pQ$, see below) preserves a natural (and often
unique) absolutely continuous measure on the collision space. The
dynamical behavior of a billiard is determined by the shape of the
boundary $\pQ$, and it may vary greatly from completely regular
(integrable) to strongly chaotic.

In this paper we  consider a planar semidispersing billiard table $Q$ on a rectangle $\bR$. There are a few convex obstacles $\bB=\cup_{i}\bB_i$ inside $\bR$, such that the billiard table is defined as $Q=\bR\setminus \bB$. We assume that the boundaries of these obstacles have positive curvatures except on two opposite points in the table, $\bp_1$ and $\bp_2$, which we call the flat points, as they have zero curvature; we also assume that the  tangent  lines of these two flat points are parallel and perpendicular to a pair of straight boundaries of $\bR$.  When replacing the rectangle $\bR$ by the torus, this becomes a classical Lorenz gas with infinite horizon, and we can say that there is an infinite channel in the unfolding table that bounded by tangent lines of these two flat points.

	  The billiard	 flow $\Phi^t$ is defined on the unit sphere bundle $Q\times	 \mathbf{S}^1$ and preserves the
Liouville measure.
Both Lorentz gas  and the semi-dispersing billiards  have been proven to	 enjoy strong ergodic
properties: their continuous time dynamics and the billiard ball maps
are both completely hyperbolic, ergodic, K-mixing and Bernoulli, see
\cite{CH96, GO, OW98, Sin70,SC87} and the references therein. However, these systems	
have quite different statistical properties depending on the geometric properties of the billiard table.

When all scatterers have positive curvature,	 the Lorentz gases	were proven to have fast mixing rates.
  Exponential mixing rates were obtained by Young \cite{Y98} for finite horizon case and by Chernov \cite{C99} under the condition of infinite horizon.	 On the other hand the semi-dispersing billiards have much	 weaker statistical properties.	
Based upon the methods of Young \cite{Y98}, it was proven that the mixing rates	 are of order $\cO(1/n)$, as $n\to\infty$, see \cite{CZ, CZ3}.  See also the recent papers	by  Chernov, Dolgopyat, Szasz and Varju \cite{CD09,DSzV08a}	 and the references therein for related studies on other statistical properties of these systems.

In the current paper we relax the assumption  of strictly positive curvature on the boundary of dispersing scatterers, by adding finitely many flat points (with zero curvature) on the boundary of dispersing scatterers.
 A family of dispersing billiards with flat points	(as a perturbation of Sinai billiards with finite horizon) were constructed  by Chernov and Zhang \cite{CZ2}. It was proved that the mixing rate varies between $\cO(1/n)$ and exponentially fast depending on the parameter.

Here  we consider  modifications of the billiards considered in \cite{CZ2}, by consider a perturbation of semidispersing billiards on a rectangle, whose induced map can be viewed as  a perturbation for Sinai billiards with infinite horizon.  If we replace the rectangle by the torus,   and view the billiard system in the unbound table with periodic scatterers, thus the associated Lorentz gas has an infinite channel bounded by the tangent lines of these two flat points.

To simplify our analysis,  we assume there is only one convex scatterer $\bB$ located at the center of the table, and 	  assume	there are	 $4$ flat points on the boundary $\partial\bB$. In addition, $\partial\bB$ is symmetric about the vertical and horizontal lines passing through the center of $\partial \bB$. Thus 	 the geometric feature of these special points are essentially identical.	 We fix any $\beta\in (2,\infty)$. Denote $\bp$ as one of the flat point and let $\{(s,z)\}$ be the Cartesian coordinate system	 originated at  $\bp$, then	the part of the boundary $\partial\bB$ that containing $\bp$ can be locally viewed as the graph of	 $z=z(s)$, such that for some	 small $\eps>0$:
\begin{itemize}
\item[(\textbf{h1})] $z(s)=-|s|^{\beta} +\cO(|s|^{\beta+1})$, for any $|s|<\eps$;
\item[(\textbf{h2})] The tangent line at $\bp$ is parallel to one straight side of the rectangle $\bR$.
\end{itemize}
The first assumption (\textbf{h1}) implies that	 these special points are indeed {\it flat points} (with zero curvature) for $\beta>2$. In addition (\textbf{h2}) implies that  any	 trajectory with infinite horizon is only tangent to the scatterer at	these special	 points.
  Fig.\ref{fig1} (a)
describes  a billiard table with flat points for $\beta>2$, and $x_p$ is a vector whose trajectory  has infinite horizon.
\begin{figure}[h]
\centering \psfrag{Q}{\scriptsize$\bR$} \psfrag{p1}{\scriptsize$\bp$}
 \psfrag{x}{\scriptsize$x_p$}\psfrag{1}{\scriptsize$\bB$}
\includegraphics[width=4in]{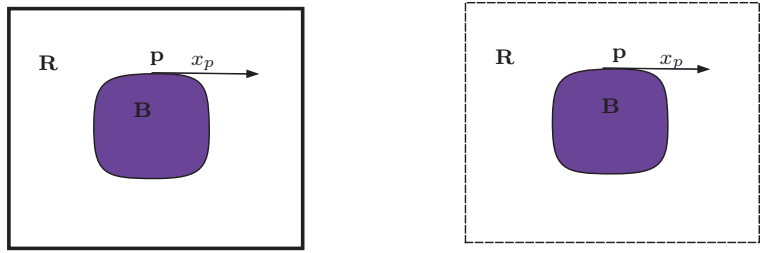}
\renewcommand{\figurename}{Fig.}
\caption{\small\small\small\small\small{(a). Billiards on a rectangle with $4$ flat points; (b). Billiards on a torus with $4$ flat points.\label{fig1}}}
\end{figure}

There is a natural cross section $\cM$ in	 $Q\times	 \mathbf{S}^1$ that contains all postcollision vectors based at the boundary of the table $\partial Q$. The set $\cM=\partial Q \times [-\pi/2, \pi/2]$  is called the collision space.  Any postcollision vector $x\in \cM$ 	 can be represented by $x=(r, \varphi)$, where $r$ is the signed
arclength parameter along $\partial Q$,	 and $\varphi\in [-\pi/2, \pi/2]$ is the angle that $x$	 makes with the inward unit normal vector to the boundary.
The corresponding Poincar\'{e} map (or the billiard map) $\cF: \cM\to \cM$ generated by the
collisions of the particle with $\pQ$ preserves a natural
absolutely continuous measure $\mu$ on the collision space $\cM$.

    For any square-integrable observable  $f,g\in L^2_{\mu}(\cM)$,  {\em
correlations} of $f$ and $g$ are defined
by \beq
   \cC_n(f,g,\cF,\mu) = \int_{\cM} (f\circ \cF^n)\, g\, d\mu -
	\int_{\cM} f\, d\mu    \int_{\cM} g\, d\mu
	   \label{Cn}
\eeq
It is well known that a system $(\cF,\mu)$ is {\em mixing} if and only
if
\beq \label{Cto0}
   \lim_{n\to\infty}
   \cC_n(f,g,\cF,\mu) = 0,
   \qquad \forall  f,g\in L^2_{\mu}(\cM).
\eeq As a result, the rate of mixing of $(\cF,\mu)$ is characterized by the speed of
convergence in (\ref{Cto0}) for smooth enough functions $f$ and $g$.
We will always assume that $f$ and $g$ are bounded, piecewise H\"older continuous or bounded
piecewise H\"older continuous with singularities  coincide with
those of the map $\cF$. We denote $\cW^{u/s}$ as the collection of all unstable/stable manifolds for the billiard map.

For any $\gamma>0$, let $\cH^-(\gamma)$	be the set of all  bounded real-valued  functions $f\in L_{\infty}(\cM,\mu)$ such that  for any $x$ and $y$ lying on one stable manifold $W^s\in \cW^s$,
\beq \label{DHC-} 	|f(x) - f(y)| \leq \|f\|^-_{\gamma} \dist(x,y)^{\gamma},\eeq
with
$$\|f\|^-_{\gamma}\colon = \sup_{W^s\in \cW^s}\sup_{ x, y\in W^s}\frac{|f(x)-f(y)|}{\dist(x,y)^{\gamma}}<\infty.$$
Here  $\dist(x,y)$ denotes the Euclidian distance of $x$ and $y$ in $\cM$,
Similarly, we define $\cH^{+}(\gamma)$ as the set of all bounded, real-valued  functions $g\in L_{\infty}(\cM,\mu)$  such that  for any $x$ and $y$ lying on one unstable manifold $W^u \in \cW^{u}$,
\beq \label{DHC+} 	
   |g(x) - g(y)| \leq \|g\|^+_{\gamma} \dist(x,y)^{\gamma},
\eeq
with
$$
   \|g\|^+_{\gamma}\colon = \sup_{W^u\in \cW^u}\sup_{ x, y\in W^u}\frac{|g(x)-g(y)|}{\dist(x,y)^{\gamma}}<\infty.
$$
For every $f\in \cH^{\pm}(\gamma)$ we define
\beq \label{defCgamma}
   \|f\|^{\pm}_{C^{\gamma}}\colon=\|f\|_{\infty}+\|f\|^{\pm}_{\gamma}.
\eeq

In this paper we  obtain the following  results.
\begin{theorem} \label{TmMain}
For the family of semidispersing  billiards on a rectangle defined as in Fig. \ref{fig1} (a), if $\beta>2$, then the correlations (\ref{Cn}) for the billiard
map $\cF:\cM\to\cM$ and piecewise H\"older continuous functions
$f\in \cH^-(\gamma), g\in\cH^+(\gamma)$,  decay polynomially: \beq \label{main1}
   |\cC_n(f,g,\cF,\mu)|\leq\, C \|f\|^-_{C^{\gamma}}\|g\|^+_{C^{\gamma}}\cdot
   n^{-\eta}
\eeq
where $\eta=1$.
\end{theorem}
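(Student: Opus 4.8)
Fix a reference set $M_0\subset\cM$ supported on the dispersing part $\partial\bB$ of the boundary and bounded away both from the four flat points and from the grazing set $\{|\varphi|=\pi/2\}$. On $M_0$ the scatterer disperses with curvature bounded below, so the first-return map $\cF^R\colon M_0\to M_0$, with return time $R\colon M_0\to\IN$, is uniformly hyperbolic, and the weak hyperbolicity created by the flat points is pushed entirely into the excursions measured by $R$. I would then verify the structural hypotheses of Young's scheme for $(M_0,\cF^R)$: a hyperbolic product structure built from the families $\cW^u,\cW^s$; uniform expansion and contraction together with bounded distortion for $\cF^R$ and its inverse; absolute continuity of the stable/unstable holonomies; and a one-step expansion (growth) estimate controlling how the singularity set of $\cF^R$ fragments unstable curves. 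These are of Chernov--Markarian type and hold on $M_0$ precisely because $M_0$ avoids the degenerate region. Granting them, the coupling/tower argument of \cite{Y98,CZ,CZ3} produces, for $f\in\cH^-(\gamma)$ and $g\in\cH^+(\gamma)$, the bound
\beq
  |\cC_n(f,g,\cF,\mu)|\le C\,\|f\|^-_{C^{\gamma}}\,\|g\|^+_{C^{\gamma}}\sum_{k>n}\mu(R>k),
\eeq
so that (\ref{main1}) reduces to showing $\sum_{k>n}\mu(R>k)\le C\,n^{-1}$, i.e.\ the return-time tail bound $\mu(R>n)\le C\,n^{-2}$.

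The heart of the proof is this tail estimate, and it is where the channel enters. A long return time occurs exactly when an orbit threads the channel cut out by \textbf{(h2)}, nearly parallel to the two parallel tangent lines, and undergoes a long near-grazing excursion in which it repeatedly grazes a flat point where, by \textbf{(h1)}, the curvature $\kappa(s)\asymp|s|^{\beta-2}$ is too weak to eject it. Working in the coordinates $(s,z)$ of \textbf{(h1)} and writing $\psi=\pi/2-|\varphi|$ for the grazing angle, I would partition a neighborhood of the grazing set into cells $\{H_m\}$ indexed so that an orbit in $H_m$ performs an excursion of combinatorial length $R\asymp m$ before returning to $M_0$, and estimate $\mu(H_m)$ from the local collision map generated by \textbf{(h1)}. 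The expected mechanism is the two-dimensional aperture-times-position count familiar from the infinite-horizon estimate: to remain in the channel for $\asymp n$ collisions an orbit must enter within angular aperture $\asymp n^{-1}$ of the exactly tangent channel direction and within a transverse position window $\asymp n^{-1}$ of the flat point, each constraint contributing one factor $n^{-1}$, whence $\mu(R>n)\asymp n^{-1}\cdot n^{-1}=n^{-2}$.

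It is important for the stated result that this exponent is independent of $\beta\in(2,\infty)$: the flat-point curvature governs the \emph{internal} dynamics of each excursion, and hence the verification of the distortion and growth conditions along it, but it does not alter the leading measure of the \emph{entry} set, which is fixed by the channel geometry. Substituting $\mu(R>n)\le C\,n^{-2}$ into the displayed bound yields $|\cC_n|\le C\,\|f\|^-_{C^{\gamma}}\|g\|^+_{C^{\gamma}}\,n^{-1}$, which is exactly (\ref{main1}) with $\eta=1$. I expect the main obstacle to be twofold. First, the tail estimate must be made rigorous through the local collision map at the flat point, showing that the channel effect pins the exponent at $2$ uniformly in $\beta$ and that the $\beta$-dependence of the flat point cancels out of the leading order. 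Second, one must check that the growth, distortion and one-step-expansion conditions for $\cF^R$ survive along the weakly hyperbolic excursions---in particular that unstable curves are not fragmented faster than they are expanded while grazing the flat point---since a failure there would either obstruct the tower construction or degrade the tail exponent. The passage from the tail bound to (\ref{main1}) is then routine.
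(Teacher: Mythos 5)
Your overall architecture --- induce on a subset of the scatterer collisions away from the flat points, prove exponential mixing for the induced map, establish the tail bound $\mu(R>n)\le Cn^{-2}$, and feed this into a Young tower --- is the same as the paper's, and your claim that the tail exponent equals $2$ independently of $\beta$ agrees with Lemma~\ref{prop2}(5). The first genuine gap is in the tail estimate itself: your ``aperture times position'' count covers only one of the two mechanisms producing long returns, namely a single long free flight down the channel, which is the paper's $\mu(x\in\tM : \tau(x)\ge n)\sim n^{-2}$ coming from $\mu(M_n)\sim n^{-3}$. The second mechanism is trapping near the parabolic periodic orbits $y_m$ based at the flat points (these have zero Lyapunov exponent, Lemma~\ref{A0stable}): an orbit can enter a narrow window around a flat point and bounce $k$ times, with a flight of length $\approx m$ between consecutive bounces, for a total return time $\approx mk$. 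These orbits form the cells $C_{m,k}$, and one must prove that $\sum_{m<n}\sum_{k\ge n/m}\mu(C_{m,k})$ is $O(n^{-2})$ or smaller; the paper obtains $\sim n^{-2-\frac{1}{\beta-1}}$ (Lemma~\ref{muC}), but only via the expansion estimate through a trapped excursion (Proposition~\ref{PrAux1}), whose proof occupies the entire final section. This cannot be dismissed by asserting that the flat-point dynamics ``does not alter the leading measure of the entry set'': for the torus system the trapping mechanism \emph{is} the dominant one and yields the $\beta$-dependent exponent of Theorem~\ref{TmMain1}, so deciding which mechanism dominates is exactly the issue, and it is marginal as $\beta\to\infty$ since $\frac{1}{\beta-1}\to 0$.

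The second gap is your displayed reduction $|\cC_n(f,g,\cF,\mu)|\le C\|f\|^-_{C^{\gamma}}\|g\|^+_{C^{\gamma}}\sum_{k>n}\mu(R>k)$, which you attribute to the coupling/tower machinery as a black box. From (\textbf{F1}) and (\textbf{F2}) alone, what the machinery gives off the shelf is Lemma~\ref{LmMain2}, i.e.\ the rate $n^{-1}(\ln n)^{2}$, with a logarithmic loss; Theorem~\ref{TmMain} asserts the rate \emph{without} the logarithm. Removing it requires a genuinely additional dynamical input about consecutive excursions: Proposition~\ref{barMm} (outside a set of relative measure $m^{-\frac{1}{2\beta}}$ in $(R=m)$, all of the next $b\ln m$ returns land in cells of index at most $m^{1-\frac{1}{2\beta}}$), which itself relies on the growth lemma and the conditional return-time distribution of Lemma~\ref{Dnaa}, and which is then combined with the longest-interval/Young-tower argument in the paper's proof of Theorem~\ref{TmMain} to show that the return time to the tower base $\Delta_0$ under $\cF$ has integrated tail $O(m^{-1})$. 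You flag the verification of growth and one-step expansion conditions as an obstacle, but you call the passage from the tail bound to (\ref{main1}) ``routine''; in fact that passage is precisely where the paper, following \cite{CZ3,CVZ}, has to do real work, and without it your argument establishes only the log-corrected bound.
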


\begin{theorem} \label{TmMain1}
For the family of Lorenz gas on a torus defined as in Fig. \ref{fig1} (b), if $\beta>2$, then the correlations (\ref{Cn}) for the billiard
map $\tF:\tM\to\tM$ and
$f\in \cH^-(\gamma), g\in\cH^+(\gamma)$,  decay polynomially: \beq \label{main}
   |\cC_n(f,g,\tF,\tilde\mu)|\leq\, C \|f\|^-_{C^{\gamma}}\|g\|^+_{C^{\gamma}}\cdot
   n^{-\eta}
\eeq
where $\eta=\frac{\beta+2}{\beta-2}$, and $\tilde\mu$ is the conditional of $\mu$ on $\tM$.
\end{theorem}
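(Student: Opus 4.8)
The plan is to realize $(\tF,\tM,\tilde\mu)$ as the base of a hyperbolic Young tower and to reduce the polynomial rate to a single tail estimate for the return time to a uniformly hyperbolic reference set. First I would record the hyperbolic structure of $\tF$ away from the flat points: the invariant stable/unstable cone families, the singularity set (the preimages of the tangential collisions $\varphi=\pm\tfrac{\pi}{2}$ together with the countable family of secondary singularity curves produced by the long free flights in the infinite channel), and the standard homogeneity strips accumulating at $\varphi=\pm\tfrac{\pi}{2}$. To these I would adjoin extra homogeneity layers adapted to the flat points, where the curvature $\cK(s)\asymp|s|^{\beta-2}$ degenerates, so that on each layer the distortion and the expansion/contraction factors stay bounded.

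Next I would isolate the weakly hyperbolic part of the phase space. Let $\cN\subset\tM$ be a small neighborhood of the grazing orbits that run along the channel and skim the flat points; on $\cN$ a single collision expands unstable vectors only by a factor close to $1$, which is the true source of the subexponential rate. Set $M^{\ast}=\tM\setminus\cN$ and let $\hat F=\tF^{R}\colon M^{\ast}\to M^{\ast}$ be the first-return map, with $R$ the first-return time. The aim of this step is to verify that $\hat F$ satisfies the conditions of Young's tower construction \cite{Y98} --- uniform hyperbolicity, bounded distortion on homogeneous components, the one-step expansion estimate dominating the growth of complexity, and absolute continuity of the holonomy --- so that $\hat F$ carries a Young tower with exponentially small return tails; this is where the scheme of \cite{CZ2,CZ} is adapted to the channel geometry.

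The crux is the tail of $R$. Using the local graph $z=-|s|^{\beta}+\cO(|s|^{\beta+1})$ at a flat point, I would analyze an orbit that stays in $\cN$ for $n$ consecutive collisions, i.e.\ grazes the flat points of successive scatterers while threading the channel. The clearance condition at a flat point couples the grazing angle $\psi=\tfrac{\pi}{2}-|\varphi|$ to the impact parameter $s$ through the degenerate curvature, and the number $n$ of successive grazing collisions an orbit can sustain before the accumulated expansion ejects it from $\cN$ is a power of the inverse grazing scale, dictated by the exponent $\beta-2$. Integrating the invariant density $c\cos\varphi\,dr\,d\varphi$ (with $\cos\varphi\asymp\psi$) over the set of such orbits, I expect to obtain
\[
   \tilde\mu(R>n)\ \le\ C\,n^{-\frac{2\beta}{\beta-2}}.
\]
I anticipate this estimate to be the main obstacle: it demands simultaneous control of the \emph{weak} expansion at the zero-curvature points and of the \emph{long} free flights in the infinite-horizon channel, together with a proof that the number of singularity curves met by a short unstable curve grows slowly enough not to overwhelm the one-step expansion. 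A matching lower bound for $\tilde\mu(R>n)$, hence the sharpness of $\eta$, would be extracted from the same local model.

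Finally I would feed this tail into the general polynomial-tower estimate: for a Young tower whose return time satisfies $\tilde\mu(R>n)=\cO(n^{-(\eta+1)})$ with $\eta+1=\frac{2\beta}{\beta-2}>1$, the correlations of piecewise H\"older functions decay at rate $n^{-\eta}$, where $\eta=\frac{2\beta}{\beta-2}-1=\frac{\beta+2}{\beta-2}$. Because $f\in\cH^-(\gamma)$ is H\"older along stable manifolds and $g\in\cH^+(\gamma)$ along unstable manifolds, the observables are admissible for the coupling argument on the tower, which yields (\ref{main}) with the stated constant $C\|f\|^-_{C^{\gamma}}\|g\|^+_{C^{\gamma}}$. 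The identification of $\tilde\mu$ as the normalized conditional of $\mu$ on $\tM$ is routine once the reference-set construction on the torus is in place.
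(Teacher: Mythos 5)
Your proposal follows essentially the same route as the paper: induce on a subset of $\tM$ that removes a neighborhood of the flat-point/grazing region (the paper's windows $U_m$ and stable sets $w^s$ in Section 3), prove exponential mixing for the induced map via the one-step expansion and Young-tower machinery (condition (\textbf{F1}), Section 8), establish the return-time tail $\tilde\mu(\tilde R>n)\sim n^{-2\beta/(\beta-2)}$ — which is exactly the paper's estimate (\ref{leveltR}), since $2+\tfrac{4}{\beta-2}=\tfrac{2\beta}{\beta-2}$ — and convert this tail into the decay rate $n^{-\eta}$, $\eta=\tfrac{\beta+2}{\beta-2}$, via the Chernov--Zhang scheme (Lemma \ref{LmMain2} plus the log-removal argument of Section 9). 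The exponents, the identification of successive grazing collisions threading the channel as the source of the tail, and the reduction to a single tail estimate all coincide with the paper's proof.
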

Moreover, we also have the optimal bound, as obtained by \cite{CVZ}.

\begin{theorem} \label{TmMaino}
For both billiard systems as described above,  consider any
 function $g$ with $ g\in \cH^+$ and $\supp (g)\subset M$.  Then there exists $N=N(g)>0$, such that for any $n>N$,  and any $f\in \cH^-$ supported on  $ M$, the
correlations (\ref{Cn})  decay as:
\beq\label{main3}
\cC_n(f,g) = \mu(R>n)\mu(f)\mu(g)+\|f\|^-_{C^{\gamma}}\|g\|^+_{C^{\gamma}} o(n^{-\eta}).
\eeq
\end{theorem}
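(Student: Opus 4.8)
The plan is to follow the sharp-asymptotics scheme of \cite{CVZ} and to upgrade the upper bounds of Theorems \ref{TmMain} and \ref{TmMain1} to an exact first-order expansion. I keep the induced (first-return) structure used to prove those theorems: a reference set $M$ on which the return map $\cF^R$ is uniformly hyperbolic with exponential loss of memory, together with the tail estimate $\mu(R>n)\asymp n^{-\eta}$ that drives the polynomial rate, where $R$ measures the length of a channel passage. Fixing $g\in\cH^+$ with $\supp(g)\subset M$, I would split the phase space according to the combinatorics of the orbit arc $x,\cF x,\dots,\cF^n x$. Let $E_n$ be the set of $x$ for which this whole arc is contained in a single long channel passage that is not completed by time $n$ (a single passage bouncing between the flat points and the vertical walls in the rectangle case, or a single long free flight in the unfolded torus case), and write $\cC_n(f,g)=\big(\int_{E_n}(f\circ\cF^n)\,g\,d\mu\big)+\big(\int_{E_n^c}(f\circ\cF^n)\,g\,d\mu-\mu(f)\mu(g)\big)$. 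Since the arc can fit in one passage only if that passage has length at least $n$, the set $E_n$ has measure $\mu(R>n)(1+o(1))$ and is exactly the part carrying order $n^{-\eta}$.

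For the first bracket I would prove $\int_{E_n}(f\circ\cF^n)\,g\,d\mu=\mu(R>n)\,\mu(f)\,\mu(g)\,(1+o(1))$. The mechanism is that along a long channel passage the billiard map is, in the slow nearly-tangential direction, close to a translation, while it remains exponentially contracting and expanding transversally; hence the entry data controlling $g(x)$ and the exit data controlling $f(\cF^n x)$ decouple, and the conditional distributions of entry and exit points equidistribute toward $\mu$ restricted to $M$. This is where the one-sided H\"older hypotheses enter: since $g\in\cH^+$ is H\"older along unstable curves and $f\in\cH^-$ along stable curves, both are transported with vanishing oscillation along the relevant fibres of the passage, so the two one-dimensional averages converge to $\mu(g)$ and $\mu(f)$ respectively, and the surviving measure factor is $\mu(E_n)=\mu(R>n)(1+o(1))$.

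The second bracket must be shown to be $o(n^{-\eta})$, which is the crux. On $E_n^c$ the arc $[0,n]$ crosses at least one genuine return to $M$, so it is covered by (at most two) incomplete end-passages together with a purely hyperbolic middle part. If no long passage is present the orbit is essentially hyperbolic throughout $[0,n]$ and the exponential memory loss of $\cF^R$ makes the piece decay exponentially in $n$, hence $o(n^{-\eta})$. Otherwise I would couple at the first and last returns inside $[0,n]$; the coupling factorizes the contribution, and what remains is governed by the lengths of the two end-passages, which sum to at most $n$. Estimating this by a convolution of passage-length tails $\sum_{a+b\le n}(\cdot)(\cdot)$ gives a quantity of order $n^{-\eta-1}=o(n^{-\eta})$, the dominant terms being those with one short and one near-$n$ factor; the near-critical passages, of length comparable to but not filling $[0,n]$, do not produce the decoupling product (a hyperbolic return intervenes before time $n$) and are absorbed into the same error by dominated convergence against the tail $\mu(R>n)$.

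The main obstacle, exactly as in \cite{CVZ}, is this improvement from $O(n^{-\eta})$ to $o(n^{-\eta})$: the coupling and growth estimates behind Theorems \ref{TmMain} and \ref{TmMain1} deliver only the correct order, and squeezing out a genuine $o(1)$ demands uniform control, over all passage lengths near $n$, of the distortion of the stable/unstable holonomies and of the oscillation of $f$ and $g$ along the slow direction of the channel; this control also makes every error term scale linearly with $\|f\|^-_{C^{\gamma}}$ and $\|g\|^+_{C^{\gamma}}$, as the statement requires. I expect the whole argument to be uniform across the two models, the only differences being the value of $\eta$ ($\eta=1$ for the semidispersing billiard and $\eta=\tfrac{\beta+2}{\beta-2}$ for the Lorenz gas) and the concrete description of a long passage; I would therefore carry out the estimates once for a generic long channel passage and then specialize to each geometry.
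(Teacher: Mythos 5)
Your identification of the main term fails at the outset. Both $f$ and $g$ are supported on $M$, so the integrand $(f\circ\cF^n)\,g$ vanishes unless $x\in M$ \emph{and} $\cF^n x\in M$. If $x$ lies in your set $E_n$ (the whole arc $x,\cF x,\dots,\cF^n x$ inside a single passage not completed by time $n$), then $x\in\supp(g)\subset M$ forces $R(x)>n$, hence $\cF^n x\notin M$ and $f(\cF^n x)=0$; so $\int_{E_n}(f\circ\cF^n)\,g\,d\mu=0$ exactly (and the borderline orbits with $R(x)=n$ have measure $\sim n^{-(2+\eta)}$, which is negligible). Orbits trapped in one long passage therefore contribute nothing, not the leading term. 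The quantity $\mu(R>n)\sim n^{-\eta}$ in Theorem \ref{TmMaino} is the measure of points of the \emph{ambient} space whose entrance time to $M$ exceeds $n$, i.e. $\sum_{k>n}\mu(x\in M: R(x)>k)$, and the term $\mu(R>n)\mu(f)\mu(g)$ is a renewal-theoretic correction produced by orbits that make many returns to $M$ and whose itinerary contains one long but \emph{completed} excursion. In your decomposition this all sits inside the "second bracket," which you are trying to bound by $o(n^{-\eta})$; so the proposed split cannot work as stated.

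A second gap: your error estimate treats successive passage lengths as independent ("convolution of passage-length tails"), but in these models they are strongly correlated — by Lemma \ref{prop2}(6), an $n$-cell can only map into $m$-cells with $c_1 n^{(\beta-1)/\beta}\le m\le c_2 n^{\beta/(\beta-1)}$, so a long excursion is typically followed by another long one. Controlling exactly this is the extra hypothesis, beyond (\textbf{F1})--(\textbf{F2}), that the sharp asymptotics of \cite{CVZ} require (condition \textbf{(H2)(b)} there), and it is the content of Proposition \ref{barMm}: after a return with $R=m$, outside a set of relative measure $m^{-e}$, the next $b\ln m$ returns land in cells of index at most $m^{1-e}$. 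The paper's own proof of Theorem \ref{TmMaino} consists precisely of verifying (\textbf{F1}) (Section 8), (\textbf{F2}) (Section 5) and Proposition \ref{barMm} (Section 9), and then quoting the general theorem of \cite{CVZ}; it does not re-derive the asymptotics. If you want a self-contained argument instead of that citation, you would have to reproduce the renewal/coupling analysis of \cite{CVZ}, and both issues above — the true origin of the main term and the dependence between consecutive returns — would have to be confronted head-on.
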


\noindent Convention. We use the following notation: $A\sim B$ means that $C^{-1}\leq A/B\leq C$ for some constant $C>1$ . Also, $A=\cO(B)$  means that
$|A|/B<C$  for some constant $C>0$. From now on, we will denote by $C>0$ various constants (depending only on the table) whose exact
values are not important.

\section{General scheme}\label{Sec:3}

 Based upon the methods by Young \cite{Y98} and Markarian \cite{M04},   a general scheme was developed in
\cite{CZ, CZ2, CZ3, CZ09}  on obtaining slow rates of hyperbolic systems with singularities and applied the method on different models.
In the general scheme, first one needs to `localize' spots in the phase space where expansion (contraction) of
tangent vectors slows down. Let $\bar M$ denote the union of all
such spots and $M = \cM\backslash \bar M$. One needs to verify
that the return map $F : M\rightarrow M $ (that avoids all the `bad'
spots) is strongly (uniformly) hyperbolic. One can easily check that it preserves the measure
$\mu_M$ obtained by conditioning $\mu$ on $M$. For any $x\in M$
we call $R (x) = \min\{n\geq  1: \cF^n(x)\in M\}$ the return time function and the  return map $F\colon M \to M$ is defined by
\beq \label{Fdef}
   F(x) = \cF^{R (x)}(x),\,\,\,\,\,\forall x\in M
\eeq

In order to prove the main Theorems,  the
the strategy developed by Chernov and Zhang consists of three steps; they
are fully described in \cite{CZ,CZ3} (as well as applied to several
classes of billiards with slow mixing rates), so we will not bring
up unnecessary details here.\\

\noindent \textbf{(F1)} First, the map $F\colon M\to M$ enjoys
exponential decay of correlations.

\noindent{\small{ More precisely, for any piecewise Holder observables $f,g$ on $M$,  with
$f\in \cH^-(\gamma), g\in\cH^+(\gamma)$, for $\gamma\in (0,1)$, then
$$\int_{M} (f\circ F^n)\, g\, d\mu_M -
    \int_{M} f\, d\mu_M    \int_{M} g\, d\mu_M\leq C \|f\|^-_{C^{\gamma}}\|g\|^+_{C^{\gamma}} \vartheta^n$$
for some uniform constant $\vartheta=\vartheta(\gamma)\in (0,1)$ and $C>0$. }}\medskip

\noindent \textbf{(F2)} Second, the distribution of the return time function $R : M\to [1, \infty)$ satisfies:
  $$\mu( (x\in M\,:\, R(x)\geq n))\sim \frac{1}{n^{1+\eta}},$$
where  $\eta>0$ and $n \geq 1$.

 \medskip

\noindent {\small{It was shown in \cite{CZ} that the same order of upper bound as in Theorem~\ref{TmMain}
follows from (\textbf{F1})-(\textbf{F2}). Also, the proof of (\textbf{F1}) is reduced in
\cite{CZ} to the verification of a one-step expansion condition.}}

The following lemma was proved in \cite{CZ}.
\begin{lemma} \label{LmMain2}
For systems under assumption ({\textbf{F1-F2}}), for the billiard map $\cF:\cM\to\cM$ and any piecewise
H\"older continuous functions $f\in \cH^-(\gamma),g\in\cH^+(\gamma)$ on $\cM$,  the correlations
(\ref{Cn})  decay as
\beq \label{main2}
   |\cC_n(f,g,\cF,\mu)|\leq\,C\|f\|^-_{C^{\gamma}}\|g\|^+_{C^{\gamma}}n^{-\eta}(\ln n)^{1+\eta}\eeq
   for some constant $C>0$.
\end{lemma}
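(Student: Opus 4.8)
The plan is to realize $(\cF,\cM,\mu)$ as a factor of a Young tower built over the base $(F,M,\mu_M)$ with roof function $R$, and to transport the exponential mixing of $F$ to polynomial mixing of $\cF$ by the coupling method of \cite{Y98}, in the hyperbolic-billiard form of \cite{M04,CZ,CZ3}. Each $y\in\cM$ is coded by its most recent visit to $M$, so $y=\cF^{\ell}x$ with $x\in M$ and $0\le \ell<R(x)$, and $\mu$ disintegrates accordingly over the tower levels. Since $\int (f\circ\cF^n)\,g\,d\mu-\mu(f)\mu(g)=\int f\,d\!\big(\cF^n_\ast(g\,\mu)-\mu(g)\,\mu\big)$, everything reduces to estimating the rate at which the pushed-forward measure $\cF^n_\ast(g\,\mu)$ equidistributes toward $\mu(g)\,\mu$; the H\"older assumptions $g\in\cH^+(\gamma)$, $f\in\cH^-(\gamma)$ are what make $g\,\mu$ a legitimate family of standard pairs and keep the coupling error H\"older-controlled.

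The engine is a coupling carried out on the base. By (F1) the return map $F$ mixes exponentially, so each time two trajectories simultaneously return to $M$ a definite fraction of their mass can be matched; after $k$ such returns the still-uncoupled mass is at most $C\vartheta^{k}$. To convert ``number of returns'' into ``elapsed time $n$'' I would use the iterated return times $R_k=\sum_{j=0}^{k-1}R\circ F^{\,j}$, for which $\cF^{R_k}=F^{k}$, and fix a coupling window $m=\lceil K\ln n\rceil$ with $K$ so large that $\vartheta^{m}\le n^{-\eta}$.

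The residual (uncoupled) mass at time $n$ then has two sources. The rate-determining one is the mass currently sitting in a single long excursion above the base: a point lies at tower height larger than $t$ with $\mu$-measure $\sim\sum_{k>t}k\,\mu_M(R=k)/\bar R$, and since (F2) gives $\mu_M(R=k)\sim k^{-(2+\eta)}$ (and $\bar R=\int_M R\,d\mu_M<\infty$ because $1+\eta>1$), this equals $\sim\sum_{k>t}k^{-(1+\eta)}\sim t^{-\eta}$; taking $t\sim n$ produces the leading factor $n^{-\eta}$. The second source is the mass that has already completed the $m$ returns but is not yet coupled, bounded by $\vartheta^{m}\le n^{-\eta}$ through (F1). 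Because coupling requires $m\sim\ln n$ \emph{good} returns within time $n$, controlling the event $\{R_m>n\}$ and summing the single-big-jump tail estimate $\mu_M(R_m>n)\lesssim m\,\mu_M(R>n/2)\sim m\,n^{-(1+\eta)}$ over the return index and over the time of the last return contributes the correction $(\ln n)^{1+\eta}$, the logarithmic exponent matching the tail exponent $1+\eta$. This logarithmic loss is intrinsic to the coupling method and is removed only by the sharper operator-renewal analysis.

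The main obstacle is the bookkeeping in the second regime: one must choose the window $m\sim\ln n$ large enough to kill the exponential term $\vartheta^{m}$ yet small enough ($m\ll n/\bar R$) that $\{R_m>n\}$ still lies in the single-big-jump regime where $\mu_M(R_m>n)\lesssim m\,\mu_M(R>n/2)$, and then sum these heavy-tailed estimates over both the return index and the last-return time while keeping the distortion of $\cF$ --- and hence the H\"older norms $\|f\|^-_{C^{\gamma}}$, $\|g\|^+_{C^{\gamma}}$ --- under uniform control along arbitrarily long excursions. Verifying the big-jump bound uniformly and checking that the coupled H\"older error does not accumulate across the $m$ returns are the delicate points; the remaining steps are the standard tower/coupling machinery of \cite{Y98,CZ,CZ3}, which is why the statement is quoted directly from \cite{CZ}.
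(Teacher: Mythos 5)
Your overall architecture---a tower over $(F,M,\mu_M)$ with roof $R$, exponential coupling supplied by \textbf{(F1)}, the heavy tail from \textbf{(F2)}, and the leading $n^{-\eta}$ coming from mass sitting in long excursions---is the same in spirit as the argument of \cite{CZ} that the paper invokes (there the tower sits over the horseshoe $\Delta_0$ of Lemma \ref{lem:1}, and the coupling is packaged inside Young's abstract theorem rather than run by hand over $M$). But your rate-determining estimate is wrong. For $m\geq 3$ the inclusion $\{R_m>n\}\subset\bigcup_{j<m}\{R\circ F^j>n/2\}$ fails: $m\sim\ln n$ summands each of size $2n/m\ll n/2$ already give $R_m=2n>n$. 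So $\mu_M(R_m>n)\lesssim m\,\mu_M(R>n/2)$ is \emph{not} a union bound; the elementary bound has threshold $n/m$, namely $\mu_M(R_m>n)\leq m\,\mu_M(R>n/m)$. As a genuine ``single big jump'' claim for the strongly dependent sequence $R, R\circ F, R\circ F^2,\dots$ it requires proof, and it is essentially the content of the refinement this paper develops precisely in order to \emph{remove} the logarithm (Proposition \ref{barMm}, following \cite{CZ3}: after a long excursion the subsequent excursion lengths drop fast). Were that bound available for free at this stage, your own accounting would produce a bound strictly stronger than (\ref{main2})---essentially the log-free rate of Theorem \ref{TmMain}---which is a signal that it cannot be assumed here.

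Relatedly, your bookkeeping does not actually produce the exponent $1+\eta$ on the logarithm. Summing your claimed bound $m\,n^{-(1+\eta)}$ over the return index ($\lesssim m\sim\ln n$ values) and over the time of the last return ($\lesssim n$ values) yields $n\cdot m^{2}\,n^{-(1+\eta)}\sim n^{-\eta}(\ln n)^{2}$, which coincides with $(\ln n)^{1+\eta}$ only in the special case $\eta=1$; for the torus map of Theorem \ref{TmMain1} one has $\eta=\frac{\beta+2}{\beta-2}\neq 1$, so the claimed matching of exponents is an accident of the case $\eta=1$. In the argument of \cite{CZ} (reproduced in refined form in Section 9 of this paper), the log exponent has a different and very specific origin: if a point makes at most $b\ln n$ returns to $M$ in time $n$, then by pigeonhole its longest excursion has length at least $n/(b\ln n)$, and a union bound over the starting time $i\in[0,n]$ of that excursion, together with $\cF$-invariance of $\mu$, gives $(n+1)\,\mu_M\bigl(R\geq n/(b\ln n)\bigr)\sim b^{1+\eta}\,n^{-\eta}(\ln n)^{1+\eta}$ by \textbf{(F2)}; the exponent $1+\eta$ comes from evaluating the tail at scale $n/\ln n$, not from counting returns. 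The many-return part is killed by choosing $b$ large using the exponential tail (\ref{Yexp1}) of returns to $\Delta_0$---and note that this horseshoe structure, obtained from the one-step expansion (Lemma \ref{onestep}) via Young's construction, is also what legitimizes your coupling step: ``a definite fraction couples at each simultaneous return'' is not a formal consequence of the correlation-decay statement \textbf{(F1)} alone. Replacing your single-big-jump step by this pigeonhole-plus-invariance estimate repairs the proof and recovers exactly (\ref{main2}).
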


 Now we see that  Theorem~\ref{TmMain} (or Theorem 2) should
follow from (\textbf{F1}) - (\textbf{F2}) for $\eta
=1$ (or $\eta=1+\frac{1}{\beta-1}$, respectively), except the extra logarithmic factor.
To improve the upper bound for the decay rates, one needs to analyze the statistical properties of the return time function.  In \cite{CZ3,CVZ}, the upper bound of decay rates of correlations was improved by dropping the logarithmic factor.

The paper is organized as following.  In  Section 3, we derive properties for the intermediate map -- the Lorenz gas on a torus. We also construct an induced system $(F,M)$ for both maps, by removing collisions on rectangular boundary as well as a small neighborhood of the flat points. The hyperbolicity of the reduced map is proved in Section 4. The assumption \textbf{(F2)} is verified in Section 5, by analyzing the distribution of the return time function. Details of the singular sets is given in Section 6. The regularity of unstable curves is studied in Section 7, including the property of the bounded curvature, distortion bounds, as well as the absolute continuity of the stable holonomy maps. The exponential decay of correlations for the reduced map and \textbf{(F1)} is proved in Section 8, by verifying the One-step expansion estimates, see Lemma \ref{onestep}.    In Section 9, we prove the improved upper bound using method as in \cite{CZ3}.

\section{  Construction of the induced map $(F, M)$}\label{reduced}

\subsection{Properties  of the intermediate Lorentz gas on torus $(\tF,\tM)$.}

In order to construct a good set in the phase space $\cM$, we first consider an intermediate system -- the associated Lorentz gas on torus. More precisely, we replace the rectangle $\IR^2$ by a torus $\IT^2$, then
we have an unbounded table $\tQ$, which is obtained by unfolding  $\mathbb{ T}^2\setminus \bB$.	We denote the corresponding system as $(\tF, \tM)$, with $$\tM=\{x=(r,\varphi)\in \cM\,|\, r\in \partial \bB\},\,\,\,\,\,\,\tF=\cF|_{\tM}$$
Note that the system $(\tF,\tM)$ counts only collisions on the convex scatterer $\bB$. $\tilde\cS_1:=\partial \tM\cup \tF^{-1}\partial \tM$ is the {\it{singular}}  set of $\tF$.
Since the boundary $\partial Q$ is $C^3$,  the map $$\tF: \tM\setminus	 \tilde\cS_1\to \tM\setminus \tF\tilde\cS_1$$ is a local $C^{2}$ diffeomorphism. For any $x\in \tM$, let $\tau(x)$ be the length of the free path between the base points of $x$ and that of the next non-tangential postcollision vector in the unbounded region $\tQ$. A point $x\in \tM$ is said to be an \textit{IH singular point} if  its	 free path is unbounded in $\tQ$, i.e. its forward trajectory never experiences any non--tangential collisions with the boundary of the scatterer (as if  the flat sides of $\IR^2$ are transparent).  In particular we denote by $x_p$ an  IH points based at the flat point $\bp$. Then there exists a	 channel or corridor in $\tQ$ that contains the trajectory of $x_p$.  We assume the $r$-coordinate of $\bp$ is $0$, then $x_p=(0, \pi/2)\in\tM$. By the symmetric   property of the billiard table, it is enough to consider those components  in the vicinity of the IH point $x_p$.

Let $x=(r,\varphi)$ and $\tF x=(r_1,\varphi_1)$.  According to \cite{CM}, for any $x\in \tM \setminus \tF^{-1}\partial \tM$ the differential of $\tF$  is
\begin{align}\label{DTdiff}
D_x \tF=\frac{-1}{\cos\varphi_1}\left(\begin{array}{cc}\tau\cK(r)+\cos\varphi & \tau \\ \tau\cK(r)\cK(r_1)+\cK(r)\cos\varphi_1+\cK(r_1)\cos\varphi & \tau\cK(r_1)+\cos\varphi_1\end{array}\right)
\end{align}
where   $\tau=\tau(x)$ is the distance between the base points of $x$ and $\tF(x)$ in the unbounded table $\tQ$. In addition, for $x\in \tM \cap \tF^{-1}\partial \tM$, $\tF x$ is tangential, we define $\tau(x)$ as the distance between the base of $x$ and  the first non-tangential collision along the forward trajectory of $x$.  By assumption the boundary of $\bB$ is	 $C^3$ smooth, so the map $\tF$ is  $C^2$ smooth	 on each smooth component of $\tM\setminus \tilde \cS_1$.
    The dynamical properties of the intermediate system $(\tF,\tM)$ was studied in \cite{Z12}.  It was shown in \cite{Z12} that  the singularity of the free path $\tau$ divides the region $\tM$ near $x_0$  into countably many connected components, labeled as $$M_n:=(x\in \tM\,:\, \tau(x)=n)$$ for $n\geq 1$.  It was proved  for $n\geq 1$,  an $n$-cell $M_{n}$
is the domain bounded by some smooth curves $\bs_{n}$, $\bs_{n+1}$, $\bs'$
and $\varphi=\pi/2$, such that $\tau$ is smooth on $M_n$.  Then for any $x\in M_n$, the free path $\tau(x)$ is approximately of length $n$ units. To investigate statistical properties of the billiard map, it is crucial to characterize the  distribution of the free path $\tau$. The following lemma was proved in \cite{Z12}.
\begin{lemma}\label{prop2} For $\beta\in [2,\infty)$. Let $x_p=(0,\pi/2)$ be the IH point based at $\bp$.
\begin{itemize}
\item [(1)] Let $\bs\subset \tF^{-1}\partial \tM$ be any smooth curve with equation $\varphi=\varphi(r)$ in the vicinity of $x_p$. Then
 $\bs$	 is a $C^2$ decreasing curve with slope:
$
d\varphi/dr =-\frac{\cos\varphi}{\tau}-\cK$.
\item[(2)] The curve $\bs'$	satisfies
\beq\label{eqs'}
\frac{\pi}{2}-\varphi=\beta r^{\beta-1}+r^{\beta}+\cO(r^{\beta+1}),\,\,\,\forall r\in[0,\eps]
\eeq
\item[(3)] For $n$ large, the curve $\bs_n$ is stretched between $\varphi=\pi/2$ and $\bs'$, with equation satisfying
\beq\label{eqsn}\frac{\pi}{2}-\varphi=\beta r|r|^{\beta-2}+\frac{1}{n}+\cO(n^{-\frac{\beta+1}{\beta-1}}),\,\,\,\,\,\,\,\,\,\forall r\in [-\eps, \eps]
\eeq
 \item[(4)] $M_n$ has length (or $r$-dimension) of order $\cO(n^{-\frac{1}{\beta}})$,	 width (or $\varphi$-dimension ) of order $\cO(n^{-2})$ and $\mu$-density of order $\cO(n^{\frac{1-\beta}{\beta}})$. In addition all boundary components of $M_n$ have uniformly bounded curvature.
\item[(5)]
There exist $c_2>c_1>0$ that do not depend on $\beta$ such that \beq\label{muMn}c_1n^{-3}+\cO( n^{-3-\frac{1}{\beta}})\leq	 \mu(M_n)\leq c_2 n^{-3}+\cO( n^{-3-\frac{1}{\beta}})\eeq
\item[(6)] There exist positive constants $c_1< c_2$, such that for any $n\geq 1$, if
  $ M_{m}\cap \tF	 M_{n}\neq\emptyset$  then
\beq\label{tran}c_1\sqrt[\beta] {n^{\beta-1}}\leq m\leq c_2
\sqrt[\beta-1] {n^{\beta}}\eeq
\end{itemize}
\end{lemma}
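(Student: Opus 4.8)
The plan is to reduce all six assertions to explicit local computations in a neighborhood of the flat point $\bp$, driven by the normal form (\textbf{h1}) and the differential (\ref{DTdiff}). As preliminaries I would first record the local data. Since $z(s)=-|s|^\beta+\cO(|s|^{\beta+1})$ and the arclength satisfies $r=s+\cO(|s|^{2\beta-1})$, the curvature expands as $\cK(r)=\beta(\beta-1)|r|^{\beta-2}(1+\cO(|r|))$ and vanishes at $\bp$; this is the source of every $\beta$-dependence below. Part~(1) is then immediate from (\ref{DTdiff}): a curve $\bs\subset\tF^{-1}\partial\tM$ is mapped by $\tF$ into $\{\varphi=\pm\pi/2\}$, so along it $d\varphi_1=0$. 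Imposing this on the second row of (\ref{DTdiff}) and letting $\cos\varphi_1\to0$, the terms carrying $\cos\varphi_1$ drop and the common factor $\cK(r_1)$ cancels, leaving $(\tau\cK(r)+\cos\varphi)\,dr+\tau\,d\varphi=0$, i.e. $d\varphi/dr=-\cos\varphi/\tau-\cK$; the $C^2$ regularity comes from $\partial Q\in C^3$.

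The heart of the argument is parts~(2)--(3), which I would obtain by tracing trajectories through the channel in the unfolded table $\tQ$. A postcollision vector $x=(r,\varphi)$ is based at $(r,-|r|^\beta)$ and leaves at tilt $\psi=\tfrac\pi2-\varphi-\beta r|r|^{\beta-2}+\cO(|r|^\beta)$ relative to the channel axis, the subtracted term $\beta r|r|^{\beta-2}$ being the tilt of the inward normal at arclength $r$. The infinite-horizon vectors are exactly those whose forward orbit grazes the downstream scatterers; imposing this tangency and Taylor-expanding in $r$, while retaining the height correction $-|r|^\beta$ of the base point, produces $\bs'$ in the form (\ref{eqs'}). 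For $\bs_n$ I would track the lateral drift: a tilt $\psi$ accumulates a transverse displacement $\sim\psi$ per channel period, so the first non-tangential return occurs after $\tau\sim\psi^{-1}$ periods, and $\bs_n$ is the locus where this count equals $n$. Solving the resulting tangency condition gives $\psi\sim1/n$; carrying the expansion one order further, where the grazing curvature $\sim|r|^{\beta-2}$ at the return collision enters, yields (\ref{eqsn}) together with the error $\cO(n^{-(\beta+1)/(\beta-1)})$.

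Parts~(4)--(5) then reduce to elementary calculus on these curves. Intersecting $\bs_n$ with $\bs'$ forces $|r|^\beta\sim1/n$, hence the $r$-extent $\cO(n^{-1/\beta})$; the vertical gap $\tfrac1n-\tfrac1{n+1}\sim n^{-2}$ between $\bs_n$ and $\bs_{n+1}$ gives the $\varphi$-extent; and on $M_n$ the density $\cos\varphi\sim\tfrac\pi2-\varphi$ is dominated by $\beta|r|^{\beta-1}\sim n^{-(\beta-1)/\beta}$, which is the claimed $\mu$-density. Integrating $d\mu=c\cos\varphi\,dr\,d\varphi$ over $M_n$ then gives $\mu(M_n)\sim n^{-3}$ as in (\ref{muMn}); the $\beta$-independence of $c_1,c_2$ I would extract by checking that the dominant contribution comes from $|r|\lesssim n^{-1/\beta}$, where the $\beta$-dependent factors recombine into a pure power of $n$. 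Bounded curvature of the boundary curves follows by differentiating their equations twice.

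Finally, for part~(6) I would analyze a single near-tangential reflection near a flat point through (\ref{DTdiff}): the strong expansion encoded by the factor $\cos^{-1}\varphi_1$, the curvature scale $\cK\sim|\rho|^{\beta-2}$ at the grazing arclength $\rho$, and the relations $\tau\sim\psi^{-1}$ and $|\rho|\sim\tau^{-1/\beta}$ from parts~(3)--(4) together bound the ratio of two consecutive free paths. The two extreme grazing configurations---maximal outgoing tilt versus near-cancellation of the outgoing tilt---produce respectively the lower and upper bounds of (\ref{tran}). I expect the main obstacle to be part~(3): pinning down the error $\cO(n^{-(\beta+1)/(\beta-1)})$ requires controlling the grazing geometry (the interplay of the base-point height, the transverse drift, and the curvature at the return collision) uniformly in $n$, disentangling the competing scales $|r|\sim n^{-1/\beta}$ and $\psi\sim1/n$. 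Part~(6) is a close second, resting on the same delicate near-tangential reflection estimate.
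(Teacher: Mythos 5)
First, a point of reference: this paper never proves Lemma~\ref{prop2} --- it is imported wholesale from \cite{Z12} (``The following lemma was proved in \cite{Z12}''), so there is no in-paper proof to compare you against; the honest comparison is with the channel analysis of that reference, which your outline does resemble in spirit. Within your sketch, part~(1) is essentially complete and correct: imposing $d\varphi_1=0$ on the second row of (\ref{DTdiff}), letting $\cos\varphi_1\to 0$ and cancelling the common factor $\cK(r_1)$ gives exactly the stated slope (with the one caveat that $\cK(r_1)=0$ when the image tangency occurs precisely at a flat point, so that degenerate case needs a separate limiting argument). The qualitative picture you set up for (2)--(6) --- the tilt $\psi=\tfrac{\pi}{2}-\varphi-\beta r|r|^{\beta-2}+\dots$ measured from the channel axis, $\bs'$ as the locus of tangency to the adjacent downstream bump, $\bs_n$ as the locus of tilt $\approx 1/n$, free path $\sim(\text{channel height})/\psi$, and the cell dimensions obtained by intersecting these curves --- is the right geometric mechanism, and it does reproduce the exponents $n^{-1/\beta}$, $n^{-2}$, $n^{(1-\beta)/\beta}$ and $n^{-3}$ of parts (4)--(5).

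As a proof, however, the proposal has gaps exactly where the content of the lemma lies. (i) Your tilt expansion is stated with error $\cO(|r|^{\beta})$, which is the \emph{same order} as the coefficient-one term $r^{\beta}$ in (\ref{eqs'}) and as the corrections you must control in (\ref{eqsn}); at that precision neither formula can be derived. You would need the expansion to order $o(|r|^{\beta})$, which forces you to track the $\cO(|s|^{\beta+1})$ term in (\textbf{h1}), the arclength-versus-abscissa correction, and the normalization of the channel period and height (the clean coefficients in (\ref{eqs'})--(\ref{eqsn}) are normalization-dependent). You explicitly defer the error exponent $(\beta+1)/(\beta-1)$ in part (3) as ``the main obstacle,'' but deferring it is not proving it --- that exponent is the claim. (ii) Part (6) cannot be obtained from ``two extreme grazing configurations'' alone. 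One must classify the return collision by whether it lands on the near or far branch of the target bump, compute the reflected tilt $\psi'=2\beta\rho|\rho|^{\beta-2}-\psi$, and then invoke the clearing condition $|\psi'|\gtrsim|\rho|^{\beta}$ (failing which the outgoing orbit strikes the adjacent copy and the next free path collapses to $\cO(1)$ rather than growing). The upper bound $m\lesssim n^{\beta/(\beta-1)}$ comes precisely from near-cancellation $\psi'\approx 0$ at $|\rho|\sim n^{-1/(\beta-1)}$ capped by that clearing condition, and the lower bound from hits at $|\rho|\sim n^{-1/\beta}$; without this case analysis your argument gives no two-sided control of $m$ at all, and the borderline configurations (outgoing orbit trapped in the adjacent scallop) are exactly what must be ruled in or out. (iii) Minor but real: the $\beta$-independence of $c_1,c_2$ in (5) is not ``factors recombining''; it comes from the identity that the cell's $r$-extent is fixed by $\bs_n\cap\bs'$, i.e.\ $r_{\max}^{\beta}=(\text{const})/n$ with a constant depending only on the channel geometry, so the leading term of $\mu(M_n)$ carries no $\beta$.
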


Although the intermediate system omit all collisions on the boundary of the rectangle $\IR^2$, it still has points with zero Lyapunov exponents.   It was shown in \cite{Z12} that if $\beta>2$, the intermediate system $(\tF, \tM)$ is nonuniformly hyperbolic. Let
 \beq\label{A0}A_0=\{x=(r,\varphi)\in \cM\,:\, \cK(x)=0, \cK(\tF x)=0\}.\eeq  The following facts were also proved in \cite{Z12}:
\begin{lemma}\label{A0stable}
For any $\beta\in (2,\infty)$, $A_0$ is a null set on which the Lyapunov exponent is zero. Moreover  $A_0$ is not empty and  any $x\in \tM\setminus A_0$ has nonzero  Lyapunov exponent. \end{lemma}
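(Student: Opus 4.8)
First I would characterize $A_0$ geometrically. By hypothesis (\textbf{h1}) the curvature $\cK$ vanishes only at the four flat points, so the condition $\cK(x)=\cK(\tF x)=0$ forces the base point of $x$ to be one of the flat points and the next collision to occur at a flat point as well. In particular the $r$-coordinate of any $x\in A_0$ belongs to a finite set, so $A_0$ is contained in finitely many vertical segments $\{r=\mathrm{const}\}$; since $\mu$ is absolutely continuous with density proportional to $\cos\varphi$ on the two-dimensional space $\tM$, this already gives $\mu(A_0)=0$. To see that $A_0\neq\emptyset$ I would exhibit an explicit orbit: by the symmetry of $\partial\bB$ about the horizontal and vertical axes there is a perpendicular period-two orbit bouncing, through the torus, between the two opposite flat points whose common tangent is horizontal; writing it as $x_\ast=(\bp,0)\mapsto \tF x_\ast=(\bp',0)$ one has $\cK(x_\ast)=\cK(\tF x_\ast)=0$, hence $x_\ast\in A_0$.

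Next, the zero-exponent claim on $A_0$ follows from a direct computation. Substituting $\cK(r)=\cK(r_1)=0$ into (\ref{DTdiff}) yields the upper-triangular matrix
$$D_x\tF=\frac{-1}{\cos\varphi_1}\begin{pmatrix}\cos\varphi & \tau\\ 0 & \cos\varphi_1\end{pmatrix},$$
whose eigenvalues have moduli $\cos\varphi/\cos\varphi_1$ and $1$. Along the perpendicular orbit $\varphi=\varphi_1=0$, so over one period the derivative is unipotent up to sign, $D_{x_\ast}\tF^2=\begin{pmatrix}1 & \tau_1+\tau_2\\ 0 & 1\end{pmatrix}$, whose iterates grow only linearly in $n$; hence $\tfrac1n\ln\|D_{x_\ast}\tF^n\|\to 0$ and the Lyapunov exponent vanishes on $A_0$.

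For the converse I would track the forward curvature $\cB^+$ of the unstable wavefront, which under $\tF$ evolves by $\cB^+=\cB^-+2\cK/\cos\varphi$ at a collision and by $1/\cB^-_{\mathrm{next}}=1/\cB^++\tau$ along the free flight, the associated one-step expansion being $1+\tau\cB^+$ (see \cite{CM}). In a dispersing configuration $\cB^+\geq 0$, and the instant the orbit meets a genuinely convex collision one gets $\cB^+>0$; this positivity is preserved forever, and as long as positive-curvature collisions recur the products $\prod_k(1+\tau_k\cB^+_k)$ diverge exponentially, giving a strictly positive exponent, which is the nonuniform hyperbolicity established in \cite{Z12}. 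Conversely, the exponent can vanish only for an orbit that is eventually entirely flat, in which case $\cB^+$ decays like $1/k$ and the growth is merely polynomial.

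The main obstacle is to reconcile the one-step definition of $A_0$ with the orbit-wise nature of the exponent: I must show that a flat-to-flat connection can occur only along the fully flat, symmetric periodic orbits, so that $A_0$ is in fact forward invariant and coincides with the eventually-all-flat set. This is where the symmetry hypotheses and the channel geometry enter. Using (\textbf{h2}) and Lemma \ref{prop2}, a trajectory leaving a flat point can return to a flat point only along the finitely many symmetric (perpendicular or diagonal) configurations, since any other near-tangential trajectory either escapes along the infinite-horizon channel or lands on a strictly convex arc. Granting this rigidity, any orbit that is eventually flat must contain two consecutive flat collisions, hence meets $A_0$, hence lies on one of these periodic orbits; therefore every $x\in \tM\setminus A_0$ avoids two consecutive flat collisions, is not eventually flat, and by the wavefront argument above has a nonzero Lyapunov exponent.
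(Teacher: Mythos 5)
The paper does not actually prove this lemma: it is quoted as a result of \cite{Z12} (``The following facts were also proved in \cite{Z12}''), so there is no internal proof to compare against, and your argument has to be judged on its own terms. The parts you compute are correct: the base point of any $x\in A_0$ and of $\tF x$ must be flat points, so $A_0$ lies on finitely many vertical lines $\{r=\mathrm{const}\}$ and is $\mu$-null; the perpendicular period-two orbit between the vertically aligned flat points shows $A_0\neq\emptyset$; and on flat-to-flat collisions the differential (\ref{DTdiff}) is indeed triangular, so the period map of such a periodic orbit is unipotent (this works not only for $\varphi=0$ but for every $y_n$, since the diagonal entries $\cos\varphi/\cos\varphi_1$ and $\cos\varphi_1/\cos\varphi$ cancel over a period), giving linear growth and zero exponent. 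This matches the paper's remark that $A_0$ is made of periodic points based at the flat points. Two caveats: the flat-to-flat connections are not ``finitely many'' --- there is one periodic point $y_n$ in each cell $M_n$, $n\geq 1$, hence countably many; and the rigidity step (every point of $A_0$ lies on such a periodic orbit, so that the one-step defining condition controls the entire orbit) is only ``granted'' in your write-up, not proved, although it does follow from the channel geometry and lattice symmetry.

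The genuine failure is in the final part, positivity of the exponent for every $x\in\tM\setminus A_0$. The principle you invoke --- that as long as positive-curvature collisions recur, the products $\prod_k\bigl(1+\tau_k\cB_k\bigr)$ diverge exponentially --- is false in this setting. An orbit can be trapped for arbitrarily long episodes near the parabolic orbits $y_m$ (this is exactly what the windows $U_m$ and the strips $C_{m,k}$ encode), and during an episode of $k$ collisions the accumulated expansion is only polynomial in $k$ (Proposition \ref{PrAux1} gives $\Lambda\sim m^{1+\frac{1}{\beta-1}}k^{3+\frac{4}{\beta-2}}$), so $\frac1n\log\prod_k\bigl(1+\tau_k\cB_k\bigr)$ can tend to $0$ along orbits that are not eventually flat and that meet strictly convex arcs infinitely often. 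Your dichotomy ``zero exponent only for eventually-entirely-flat orbits'' therefore does not follow from the wavefront argument; indeed the paper itself removes from $M$ not only $A_0$ but also the stable sets $w^s_n(y_n)$ --- whose points are generally \emph{not} in $A_0$ --- precisely because expansion degenerates along them. Quantifying the expansion gained during and between trapping episodes, and showing that it forces a positive exponent off the exceptional set when $\beta>2$, is the substantive content of the nonuniform hyperbolicity theorem of \cite{Z12}, and it is the step missing from your proof.
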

One can check that $A_0$ is made of   periodic points 	based at those flat points. Indeed for any $n\geq 1$, there exists a unique periodic point  $y_n\in A_0\cap M_n$. We define its {\it{ stable set}}  as
$$w_n^s(y_n):=\{ x\in M_n\,|\, \lim_{m\to\infty} d(\tF^m x, y_n)=0\}.$$
Since the Lyapunov exponents are zero at points in $A_0$, we need to eliminate points in $A_0$ and the union of their stable sets, denoted as $w^s$.

\subsection{Construction of the induced map for both $(\cF,\cM)$ and $(\tF,\tM)$.}
In this subsection, we will construct an induced map $(F,M)$ that works for both systems $(\cF,\cM)$ and $\tF,\tM)$.  Later on, we will investigate the induced map $(F,M)$, and to investigate both maps $(\cF,\tM)$ and $(\cF,\cM)$ simultaneously.

Note that by Lemma \ref{prop2} and \cite{Z12}, the zero curvature line $r=0$ cut a cell $M_m$ in to two parts, one of which has very small measure. We fix a small constant $\varepsilon_0 >0$, and for any $m\geq 1$, we define a
 set  $$U_m=([-\eps_0 m^{-\frac{1}{\beta-1}} , \eps_0 m^{-\frac{1}{\beta-1}}]\times [-\pi/2,
\pi/2] )\cap M_{m}.$$
Here $\varepsilon$ can be chosen small enough such that $M_m\setminus U_m$ still contains two regions with $r$-dimension much larger than that of $U_m$.
 Let \beq\label{MU}
   M = \{ x=(r,\varphi)\in \tM \,:\, r\in \partial \bB, x\in M_m\setminus (U_m \cup w^s),\,\, m\in \mathbb{N}\}),
\eeq Note that $M$ only contains points that based on the convex scatterer $\bB$, and we also remove from each cell $M_m$ the stable set $w^s_m$ and a narrow window $U_m$  that contains $y_m$, see Figure.~\ref{FigSing1}. Let $q_1^m$ denote one of the two points on $\pQ$
that border the base of $U_m$, and by $q_2^m$
the other base point.

 First of all we  define a return time function related to the special set $M$,  such that for any $x\in M$, \beq
	 R(x)=\min\{n>0:\ \cF^{n}(x)\in M\}
	   \label{Rx1}
\eeq
We define  the  map $F: M\to M$ as the first return map to $M$, such that for any $x\in M$,
$$F x=\cF^{R(x)} x$$
We call $$C_m:=(x\in M\,:\, \tau(x)=m)=M_m\setminus U_m=C_m'\cup C_m''$$ as the \emph{$m$-cell} of $F$ in $M$ (note this is not the level sets of $R$). $C_m$ contains two disconnected regions from $M_m$, denoted as $C_m'$ and $C_m''$, where $C_m'$ contains almost tangential collisions with angle smaller than $1/m$, while $C_m''$ contains collisions with angle larger than $1/m$.   Moreover, the set  $U_m$ has length ($r$-dimension) $\cO(m^{-\frac{1}{\beta-1}})$, height ($\varphi$-dimension) $\cO(m^{-1})$ and density $\cO(m^{-1})$, thus the measure $\mu(U_m)\sim m^{-3-\frac{1}{\beta-1}}$. By Proposition \ref{prop2}, we know that $\mu(M_m)\sim m^{-3}$. Thus
\beq\label{muCm}\mu(C_m)=\mu(M_m)-\mu(U_m) \sim \mu(M_m)\sim m^{-3}\eeq

Notice for any point $x\in U_m$, and $x_1=\cF x$, we have
\beq\label{cosphi}|r|^{\beta-1}\leq \frac{c}{m},\,\,\,\,\,\,\text{ and }\,\,\,\,\,\,\,\tau(x)\sim m.\eeq
On the other hand let $\cK_m$ be the curvature at the boundary of $U_m$, then for any $x=(r,\varphi)\in C_m$, the curvature at $r$ satisfies
\beq\label{cKCm}\frac{C}{m^{1-\frac{2}{\beta}}}\geq \cK(r)=\beta (\beta-1) r^{\beta-2}\geq \cK_m\geq \frac{c}{m^{1-\frac{1}{\beta-1}}}\eeq
 Clearly $F$ preserves the conditional measure $\hmu$ obtained by restricting $\mu$ on $M$. Next we will check the reduced system for the three conditions (\textbf{F1}) -(\textbf{F3}) proposed in the general scheme.

\section{Hyperbolicity of $(F,M)$}

The key to understanding chaotic
 billiards lies in the study of	 infinitesimal families of trajectories. The basic notion is that of a wave front
along a billiard trajectory. More precisely,  for any $x\in M$, let $V\in \cT_xM$ be a tangent vector. For $\eps>0$ small, let us consider  an infinitesimal curve
$\gamma=\gamma(s)\subset M$, where $s\in (-\eps,\eps)$ is a parameter, such that $\gamma(0)=x$ and $\frac{d}{ds}\gamma(0)=V$
 The trajectories of the points $y\in \gamma$, after leaving $M$, make a  bundle of directed lines in $Q$.
To measure the expanding or contracting  of the wave front, let $\cB=\cB(x)$, which represents the curvature of the orthogonal cross-section of that  wave front at the point $x$ with respect to the  vector $V$.	We say the wave front $\gamma$ is {\it{dispersing}} if $\cB>0$. Similarly, the past trajectories of the points $y\in \gamma$ (before arriving at $M$) make a bundle of directed lines in $Q$ whose curvature
right before the collision with $\partial Q$ at $x$ is denoted by $\cB^-=\cB^-(x)$.
We define $x_1=\tF x=(r_1,\varphi_1)$, then we have \cite{CM}:
\beq\label{cBt}
\cB^-(x_1)=\frac{\cB(x)}{1+\tau(x)\cB(x)}=\frac{1}{\tau(x)+\frac{1}{\cB(x)}}
\,\,\,\text{ and }\,\,\,\,\,{\cB}(x)=\cB^-(x)+\cR(x)\eeq
where $\cR(x)=\tfrac{2\cK}{\cos\varphi}$ is called the {\it{collision parameter}} at $x$.
(\ref{cBt})  implies that	a wavefront that is initially dispersing will stay dispersing. By our construction  on the included map and (\ref{cKCm}), for any $x\in C_m$, with $m\geq 1$, there exist $\tau_{\min}>0$ and $0<\cK_m<\cK_{\max}$ such that for any $x\in M$,
\beq\label{takck}
 \tau(x)\geq \tau_{\min}\,\,\,\,\,\,\text{ and }\,\,\,\,\, \cK_m\leq \cK(x)\leq \cK_{\max}.
\eeq
where $\cK_m\sim m^{-1+\frac{1}{\beta-1}}$ is the minimal curvature at the end point of $U_m$.
Denote by $\cV=d\varphi/dr$ the slope of the tangent line of $W$ at $x$. Then  $\cV$ satisfies
\beq\label{cVx}\cV= \cB^{-}\cos\varphi+\cK(r) = \cB
\cos\varphi-\cK(r)\eeq

 We use the cone method developed by Wojtkowski \cite{Wo85} for establishing hyperbolicity for phase points  $x=(r,\varphi)\in M$. In particular we study stable and unstable wave front. The relations in (\ref{cBt}) and (\ref{cBt}) implies that a dispersing wave front remains bounded away from zero.
  \begin{defn}\label{defn:1}
  The unstable cone $\cC_{x}^u$ contains all tangent vectors based at ${x}$ whose images  generate dispersing wave fronts:
$$\cC^u_x=\{(dr, d\varphi)\in \cT_{x} M\,:\, \cK(r)\leq d\varphi/dr\leq	 \cK(r)+\tau_{\min}^{-1}\}$$
Similarly the stable cones are defined as
 $$\cC^s_x=\{(dr, d\varphi)\in \cT_{x} M\,:\, -\cK(r)\geq d\varphi/dr\geq	 -\cK(r)-\tau_{\min}^{-1}\}$$
 We  say that a smooth curve $W\subset M$ is an unstable
(stable) curve for the reduced system $(F,M)$ if at every point $x \in W$ the tangent line
$\cT_x W$ belongs in the unstable (stable) cone $C^u_x$
($C^s_x$). Furthermore, a curve $W\subset  M$ is an unstable
(resp. stable) manifold for the reduced system $(F,M)$ if $F^{-n}(W)$ is an unstable (resp.
stable) curve for all $n \geq 0$ (resp. $\leq 0$).
  \end{defn}

For any $m\geq 1$, consider a short unstable curve
$W\subset C_m$ with equation $\varphi=\varphi(r)$. Let $x=(q,v)=(r,\varphi)\in W$ and $V=(dr,d\varphi)$ be
a tangent vector at $x$ of $W$.
It
follows from (\ref{cBt})  that
 \beq\label{Bbd}
  \cB^-(x)\leq \tau_{\min}^{-1}, \,\,\,\,\,\,\cK_m\leq  \cos\varphi \cB(x)<\tau_{\min}^{-1}+2\cK_{\max}
 \eeq
Combining with (\ref{DTdiff}), if $\tF x$ stays away from the flat points, then the tangent vector of $FW$ at $x_1=(r_1,\varphi_1)=\tF x$ satisfies
\beq\label{cV1}\cK(r_1)+\frac{\cos\varphi_1}{\tau(x)+\frac{\cos\varphi}{2\cK(r)}}\leq\cV(x_1)\leq \cK(r_1)+\frac{\cos\varphi_1}{\tau(x)}\eeq
This implies that the unstable cone $\cC^u_x$ is strictly invariant under $\tF$, if the base of $\tF x$ stays away from the flat points. Similarly one can check that $D \tF(\cC^s_x)\supset \cC^s_{ \tF x}$.

\begin{lemma}\label{KBbd} Let $m_0$ be a large number. For any $m>m_0$, any unstable curve $W\subset C_m$  such that $\tF^{-1}W$ is also unstable, let  $x=(r,\varphi)\in W$. \\
    (a)  $$\cB^-(x)\sim \tau(x)^{-1},\,\,\,\,\, \cB(x)\sim \frac{\cK}{\cos\varphi}\geq c m^{\frac{1}{\beta}}$$where $c=c(Q)$ is a constant.\\
     \noindent(b) The slope $\cV(x)=d\varphi/dr$ satisfies
     $\cV \sim\cK\sim \cos\varphi^{\frac{\beta-2}{\beta-1}}\geq \cK_m$.
 \end{lemma}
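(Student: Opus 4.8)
The plan is to reduce both parts to the one–step mirror equation (\ref{cBt}) and the slope identity (\ref{cVx}), fed by the local geometry of the cell $C_m$ recorded in Lemma~\ref{prop2}. First I would assemble the geometric comparabilities on $C_m$. From the graph $z=-|s|^{\beta}+\cO(|s|^{\beta+1})$ one has $\cK(r)=\beta(\beta-1)|r|^{\beta-2}\bigl(1+\cO(|r|)\bigr)$, and from (\ref{eqsn}), using $\cos\varphi=\sin(\tfrac{\pi}{2}-\varphi)\sim\tfrac{\pi}{2}-\varphi$ near a grazing collision, $\tfrac{\pi}{2}-\varphi\sim\beta|r|^{\beta-1}\,\mathrm{sgn}(r)+m^{-1}$; since $U_m$ (together with the stable set $w^s$) has been deleted, $r$ is confined to $\eps_0 m^{-1/(\beta-1)}\le|r|\le C m^{-1/\beta}$. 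I would then split $C_m=C_m'\cup C_m''$: on the non–tangential part $C_m''$ the term $|r|^{\beta-1}$ dominates $m^{-1}$, so $\cos\varphi\sim|r|^{\beta-1}$, whereas on the almost–tangential part $C_m'$ one has $|r|\sim m^{-1/(\beta-1)}$, $\cK\sim\cK_m$ and $\cos\varphi\sim m^{-1}$. In either case a direct computation (this is the only place the split is needed) gives the three facts used below:
\[
\frac{\cK}{\cos\varphi}\ \ge\ c\,m^{1/\beta},\qquad \frac{\cos\varphi}{\cK}\xrightarrow[m\to\infty]{}0,\qquad \cK\sim\cos\varphi^{\frac{\beta-2}{\beta-1}}.
\]

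With these in hand the estimate on $\cB$ is immediate. Writing $\cB(x)=\cB^-(x)+\cR(x)$ with $\cR(x)=2\cK/\cos\varphi$, the hypothesis that $\tF^{-1}W$ is unstable forces the incoming front to be dispersing, so by (\ref{Bbd}) (equivalently $\cB^-(x)=(\tau(\tF^{-1}x)+\cB(\tF^{-1}x)^{-1})^{-1}\le\tau_{\min}^{-1}$) the reflected part $\cB^-(x)$ is bounded by a constant. Since $\cR(x)\ge 2c\,m^{1/\beta}\to\infty$, for $m>m_0$ the collision term dominates and $\cB(x)\sim\cK/\cos\varphi\ge c\,m^{1/\beta}$, which is the second half of (a). Part (b) follows at once from the first form of (\ref{cVx}): $\cV=\cB^-\cos\varphi+\cK$, and since $\cB^-\cos\varphi\le\tau_{\min}^{-1}\cos\varphi=o(\cK)$ by the second comparability above, the curvature term dominates, giving $\cV\sim\cK\sim\cos\varphi^{(\beta-2)/(\beta-1)}$; the bound $\cK\ge\cK_m$ is (\ref{cKCm}).

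The remaining claim $\cB^-(x)\sim\tau(x)^{-1}$ is the delicate one. Applying (\ref{cBt}) to the step $\tF^{-1}x\mapsto x$ gives $\cB^-(x)=\bigl(\tau(\tF^{-1}x)+\cB(\tF^{-1}x)^{-1}\bigr)^{-1}$; the unstable cone condition at $\tF^{-1}x$ together with the first comparability (applied there) yields $\cB(\tF^{-1}x)\ge\cR(\tF^{-1}x)\ge c>0$, so $\cB(\tF^{-1}x)^{-1}=\cO(1)$ and hence $\cB^-(x)\sim\tau(\tF^{-1}x)^{-1}$. Everything therefore reduces to the comparability of the incoming and outgoing free paths,
\[
\tau(\tF^{-1}x)\sim\tau(x)=m,
\]
and this is the main obstacle. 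I would obtain it from reversibility: with the involution $\cI(r,\varphi)=(r,-\varphi)$ one has $\cI\tF\cI=\tF^{-1}$ and consequently $\tau(\tF^{-1}x)=\tau(\cI x)$. Because $\partial\bB$ is symmetric about the horizontal and vertical axes through its centre, $x_p=(0,\tfrac{\pi}{2})$ and its mirror $\cI x_p=(0,-\tfrac{\pi}{2})$ are IH points of the same channel and $\cI$ maps the cell $M_m$ at $x_p$ onto the $m$-th cell at $\cI x_p$; hence $\tau(\cI x)\sim m$. Intuitively, a grazing collision with long forward horizon can only be produced by a trajectory already nearly parallel to the channel, so it must graze comparably many copies of $\bB$ on the way in as on the way out. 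Combining, $\cB^-(x)\sim m^{-1}=\tau(x)^{-1}$. The point requiring genuine care is the uniformity of the implied constants in $\tau(\cI x)\sim m$ across all of $C_m$, for which I would lean on the explicit cell asymptotics of Lemma~\ref{prop2} in concert with the table symmetry.
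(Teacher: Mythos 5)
Your two ``soft'' steps coincide with the paper's and are fine: once one knows $\cK/\cos\varphi\to\infty$ on $W$ and $\cB^-$ is bounded, the dominance of the collision term $\cR=2\cK/\cos\varphi$ in $\cB=\cB^-+\cR$ gives $\cB\sim\cK/\cos\varphi$, and $\cV=\cB^-\cos\varphi+\cK\sim\cK$. But the two hard points are handled incorrectly. The first gap is your derivation of the comparability $\cK\sim\cos\varphi^{(\beta-2)/(\beta-1)}$ (equivalently, your claim ``$\cos\varphi\sim m^{-1}$ on $C_m'$'') from the forward cell $C_m$ alone. That claim is false on part of $C_m'$: by Lemma \ref{prop2} the nearly tangential component of an $m$-cell reaches all the way up to $\{\varphi=\pi/2\}$, so $C_m'$ contains points where $\cK\sim\cK_m$ is bounded below while $\cos\varphi$ is arbitrarily small; there $\cK\gg\cos\varphi^{(\beta-2)/(\beta-1)}$. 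Membership in $C_m$ gives only one-sided control of $\cos\varphi$. This is exactly what the hypothesis ``$\tF^{-1}W$ is unstable'' is for in the paper's proof: it places $\tF^{-1}W$ in a single backward cell $C_n$, so that $W\subset C_m\cap\tF C_n$ with $n$ confined to $[m^{(\beta-1)/\beta},m^{\beta/(\beta-1)}]$ by (\ref{tran}), and the relations $\cos\varphi\sim n^{-1}$ on $\tF C_n'$, $\cos\varphi\sim|r|^{\beta-1}$ on $\tF C_n''$ are then read off from the geometry of the \emph{backward} cells, see (\ref{coscK1})--(\ref{coscK}). You use the hypothesis only to know the incoming front is dispersing, which does not exclude the bad corner of $C_m'$: a curve through that corner can perfectly well have an unstable preimage at a regular collision with $\cO(1)$ free path. (In passing, the same localization is needed for your bound $\cB(\tF^{-1}x)\geq\cR(\tF^{-1}x)\geq c>0$: if $\tF^{-1}x$ fell inside a window $U_n$, where $\cK$ can vanish, that inequality would also fail.)

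The second gap is the closing step $\tau(\tF^{-1}x)\sim\tau(x)=m$, which is false, and no correct argument can produce it: by part (6) of Lemma \ref{prop2}, the incoming flight length $n=\tau(\tF^{-1}x)$ of a point of $C_m$ is constrained only to $c_1 m^{(\beta-1)/\beta}\leq n\leq c_2 m^{\beta/(\beta-1)}$, and the whole range occurs. The involution step is where your argument breaks: $\tau(\tF^{-1}x)=\tau(\cI x)$ is correct, but $\cI M_m$ is not ``the $m$-th cell at $\cI x_p$''; it equals $\tF M_m$, the set of points whose \emph{backward} free path is $m$. Asserting that $\cI x$ lies in an $m$-th forward cell is precisely the statement you are trying to prove, so the symmetry argument is circular (and the conclusion wrong -- the issue is not uniformity of constants). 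The paper never claims the incoming and outgoing flights are comparable: its proof obtains $\cB^-(x)\sim 1/\tau$ directly from (\ref{cBt}), where $\tau$ is the incoming flight length $\tau(\tF^{-1}x)$, and the ``$\tau(x)$'' in the statement of the lemma must be read in that sense. Taking the statement literally as the forward free path, as you did, forces the false comparability $n\sim m$.
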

 \begin{proof}

 Since  $m\geq m_0$ is large, the condition $\tF^{-1}W$ is unstable implies that $W$ is almost parallel to the long boundary of $\tF C_n$, for some $n\in [m^{\frac{\beta-1}{\beta}}, m^{\frac{\beta}{\beta-1}}]$. Note that $\tF C_n$ has two part: the smaller region containing points after almost tangential collisions, denoted as $\tF C_n'$; and the larger region with collision angles smaller than $1/n$, denoted as $\tF C_n''$. Thus by (\ref{eqs'}), point $x=(r,\varphi)\in W\cap \tF C_n'$ satisfies
 \beq\label{coscK1}\cos\varphi\sim -\beta |r|^{\beta-1}+\frac{1}{n}\sim n^{-1}\sim \cK^{1+\frac{1}{\beta-2}},\,\,\,\,\,\,\,n\in [m^{\frac{\beta-1}{\beta}}, m]\eeq
 where we have used the definition of $\tF C_n'$, i.e.  $|r|\sim n^{-\frac{1}{\beta-1}}$. Similarly, for $x=(r,\varphi)\in W\cap FC_n''$
 \beq\label{coscK}\cos\varphi\sim \beta |r|^{\beta-1}+\frac{1}{n}\sim |r|^{\beta-1}\sim \cK^{1+\frac{1}{\beta-2}},\,\,\,\,\,\,\,\,n\in [m,m^{\frac{\beta}{\beta-1}}]\eeq

 Moreover this implies that \beq\label{Kcos1}
\frac{\cK}{\cos\varphi} \sim \frac{1}{\cos\varphi^{\frac{1}{\beta-1}}}\geq C m^{\frac{1}{\beta}}
\eeq

Thus the slope of the tangent line of $W$ at $x$ satisfies
 \beq\label{slopeW}
 d\varphi/dr\sim \cK+\frac{\cos\varphi}{\tau}\sim\cK\sim\cos\varphi^{\frac{\beta-2}{\beta-1}}\geq \cK_n
 \eeq
  Combining with (\ref{cVx}) and (\ref{cBt}), we also obtained the following estimates for $\cB^{\pm}$:
  \beq\label{cBKr}
  \cos\varphi\cB(x)\sim \cK,\,\,\,\,\,\,\,\,\text{ and }\,\,\,\,\,\,\,\, \cB^-(x)\sim \frac{1}{\tau}
  \eeq

  \end{proof}

Next we will introduce two metrics on the tangent space $\cT M$. Let $x\in C_m$, for any $m\geq 1$.
\begin{enumerate}
\item[(I)]
The first one is the so-called \emph{p-metric} on  vectors $dx = (dr, d\varphi)$ by
\beq\label{pnorm} |dx|_p = \cos \varphi \, |dr|.\eeq
 Put
$x_1=(r_1,\varphi_1)=\tF x$ and $dx=(dr_1,d\varphi_1)=D \tF(dx)$, for $n\geq 1$.  The expansion factor is \beq
	\frac{|dx_1|_p}{|dx|_p}= 1+\tau(x){\cal
	\cB}(x)\geq 1+\frac{\tau(x)\cK(r)}{\cos\varphi}
			 \label{DTvu1}
\eeq where $\tau(x)\geq c m$ is the collision time for $x$ under $\tF$.

\item[(II)] Now consider the expansion factor in \emph{Euclidean metric} $|dv|^2 =
(dr)^2 + (d\varphi)^2$. Note that
 \beq
\label{penorm}
	\frac{|dx_1|}{|dx|} =
	\frac{|dx_1|_p}{|dx|_p}\,
	\frac{\cos \varphi}{\cos\varphi_{1}}\, \frac{\sqrt{1+(\frac{d\varphi_1}{dr_1})^2}}{\sqrt{1+{(\frac{d\varphi}{dr})^2}}}
\eeq
\end{enumerate}
\section{Distribution of the return time function $R$}

It follows from the definition of $M$, it contains countably many cells $\{C_m\}$ whose boundaries are made of singular curves in $\tilde\cS_1$ for the intermediate system $\tF$. On the other hand, there are new types of singularity curves of $F$ in each
$m-$cell $C_m$ consists of preimages of the vertical boundary components of $U_m$'s.

    For any $x\in C_m$, the value $ R(x)$  corresponding to the number of bounces the billiard
trajectory of the point $x\in C_m$ reflected by $\cF$ in the window $U_m$ before returning to $M$. The singularities of
$R(x)$ occur at points where the number of bounces in the window
 changes from $k$ to $k+1$ or $k-1$, for $k> 1$. Accordingly, the new singular curves in $M$ corresponding to the discontinuities of the return function $R(x)$.

We denote $\{y_m\}$ as the sequence of periodic points with period $m$ that approaching to the IH singular point $x_p$, as $m\to\infty$. Let $w_m^s=w^s_m(y_m)$ be the weak stable manifolds of these periodic points.  For each $m\geq 1$, the singularity set  of the map $F$ in $C_m$ consists of   two
types of infinite sequences of singularity curves $\{s_{m,k}\}$
and $\{s'_{m,k}\}$, approaching to $w^s_m$ from both sides. These curves  correspond to the discontinuities of the function
$R(x)$ in $C_m$.  More precisely, the region above $w^s_m$ but
below $c_m$ consists of points whose trajectories enter the window
and  turn back without reaching  $\gamma_{y_m}$. This region is divided into a sequence of
almost parallel strips by $\{s_{m,k}\}$, $k\in \mathbb{N}$, see
Figure.~\ref{FigSing1}. Denote by $C_{m,k}$ the strip bounded by
$s_{m,k}$, $s_{m,k+1}$ and $\partial U_m$.
 The region blow $w^s_m$ but above $c_m'$
consists of points whose trajectories enter the window and manage
to move through it crossing  $\gamma_{y_m}$. This region is divided into a sequence of almost
parallel strips by $\{s'_{m,k}\}$,  $k\in \mathbb{N}$. Let
$C'_{m,k}$  be the strip bounded by $s'_{m,k}$, $s'_{m,k+1}$ and $\partial U_m$. Both $C_{m,k}$ and $C'_{m,k}$ consist of points
experiencing exactly $k$ collisions with the boundary of $Q$
before returning to $M$.

We also denote $C_{m,0}=C'_{m,0}$ as those points $x\in C_m$, such that $\tF x\in M$, i.e. these points do not enter into the windows $\{U_m. m\geq 1\}$ under iteration of $\tF$. Moreover, for any $n>1$, the level sets of the return time function $R$ can be represented as
\beq\label{Rlevel}(x\in M\,:\, R(x)=n)=\bigcup_{m\geq 1}\bigcup_{k=[n/m]} (C'_{m,k}\cup C_{m,k})\eeq

\begin{figure}[h]
\center \psfrag{M}{$M$}
\psfrag{s1,1}{\scriptsize$s'_{m,k}$}\psfrag{u}{\scriptsize$U_m
$}\psfrag{v}{\scriptsize$U_{m+1}
$}
\psfrag{z}{\scriptsize$y_0$}\psfrag{cm'}{\scriptsize$c_m'$}
\psfrag{s'}{\scriptsize$s_{m,k}'$}\psfrag{cm}{\scriptsize$c_m$}\psfrag{}{\scriptsize$z$}
\psfrag{S0}{\scriptsize$S_0$}\psfrag{1}{\scriptsize$C_m$}
\psfrag{w}{\scriptsize$w^s_m$}\psfrag{S1,1}{\scriptsize$S_{m+1}$}
\psfrag{phi}{\scriptsize$\varphi$}\psfrag{r}{\scriptsize$r$}\psfrag{s2}{\scriptsize$S_m$}
\includegraphics[width=4in,height=2in]{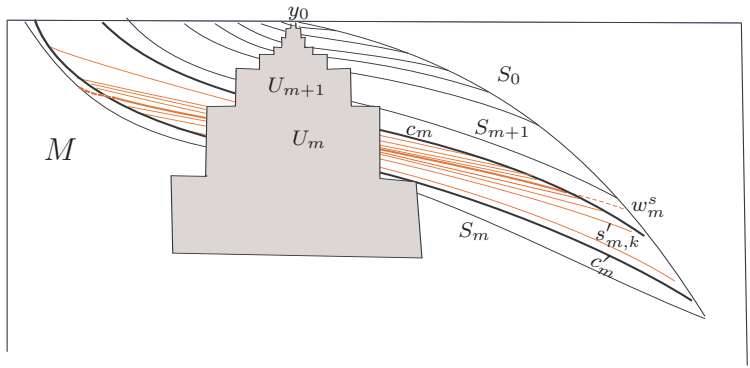}
\renewcommand{\figurename}{Figure.}
\caption{Singularity curves of $F$ in the  vicinity of $C_m$.}\label{FigSing1}
\end{figure}

Fig \ref{FigSing1} shows the structure of these new singular curves in the cell $C_m$.   There are bold steeply decreasing curves $c_m$ terminating on $\bs_{0}$ or $\partial M$, consist of the points in
$C_m$ whose trajectories hit the point $q^m $ of $\pQ$
under the map $\cF$. The stable manifold $w^s_m$ crosses $c_m$,
consists of points whose trajectories converge to $\gamma_{y_m}$.
The dashed part of $w^s_m$  does not enter the window immediately,
but will do so in one iteration.

 In order to determine the rates of the
decay of correlations we need certain quantitative estimates on
the measure of the regions $C_{m,k}$ and $C_{m,k}'$ and on the
factor of expansion of unstable manifolds $W \subset C_{m,k}$ and
$W\subset C_{m,k}'$ under the map $F$. We first state a lemma about the expansion of points in $C_{m,k}$ along all collisions in the region $U_m$.

\begin{proposition} \label{PrAux1} For any $m\geq 1, k\geq 1$, let $W\subset C_{m,k}$ be an unstable curve, and $\Lambda(x)$ be its expansion factor under the map $F=\cF^{mk}$ at $x\in W$.\\
(1) The expansion factor in the Euclidian metric satisfies:
$$\Lambda(x)\sim m^2\cK(x)k^{3+\frac{4}{\beta-2}} \sim m^{1+\frac{1}{\beta-1}} k^{3+\frac{4}{\beta-2}} $$
 \noindent(2) If $W$ is  a vertical curve then $\Lambda(x)\sim  m^{2} k^{3+\frac{4}{\beta-2}}$. Here $C>0$ is constant.
\end{proposition}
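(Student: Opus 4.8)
The plan is to write the expansion factor as a product of one‑step factors along the orbit segment of length $R(x)=mk$ and to telescope using the two metrics of Section~4. First I would pass from the full map $\cF$ to the intermediate map $\tF$: a reflection off a flat side of $\bR$ has zero collision parameter ($\cR=0$), so it leaves $\cB$ unchanged, and the identity $(1+\tau_0\cB_0)(1+\tau_1\cB_1)=1+(\tau_0+\tau_1)\cB_0$ with $\cB_1=\cB_0/(1+\tau_0\cB_0)$ shows that a run of free flights interrupted by flat reflections contributes to the $p$-metric exactly as a single free flight of the total length. Hence the flat‑wall collisions are transparent for the $p$-expansion, and it suffices to analyse the $k$ successive collisions with $\bB$ that the trajectory of $x$ makes while trapped in the window $U_m$, each carrying free path $\tau_j\sim m$.

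Next I would telescope. Writing $x_0=x,\dots,x_k=Fx$ for these $\bB$-collisions and applying (\ref{penorm}) at each step, the $\cos\varphi$-factors and the slope factors collapse, leaving
\beq
\Lambda(x)=\Big(\prod_{j=0}^{k-1}\big(1+\tau_j\cB(x_j)\big)\Big)\cdot\frac{\cos\varphi_0}{\cos\varphi_k}\cdot\frac{\sqrt{1+\cV_k^2}}{\sqrt{1+\cV_0^2}}.
\eeq
By Lemma~\ref{KBbd} the slopes $\cV_0,\cV_k\sim\cK$ are bounded, so the last factor is $\sim1$; by the symmetry of the channel the entry and exit are grazing collisions with $\cos\varphi_0\sim\cos\varphi_k\sim m^{-1}$, so the middle factor is $\sim1$ as well. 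Using $\tau_j\sim m$, $\cos\varphi_j\sim m^{-1}$ and $\cos\varphi\,\cB\sim\cK$ from (\ref{cBKr}) one gets $\tau_j\cB(x_j)\sim m^2\cK(x_j)$, so the whole statement reduces to the product estimate
\beq
\prod_{j=0}^{k-1}\big(1+m^2\cK(x_j)\big)\sim m^2\cK(x)\,k^{3+\frac{4}{\beta-2}} .
\eeq
The $j=0$ term alone already gives $1+m^2\cK(x_0)\sim m^2\cK(x)$ (which is $\gg1$), so the content is that the remaining product is $\sim k^{3+\frac4{\beta-2}}$, uniformly in $m$.

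The heart of the proof is therefore the behaviour of $\cK(x_j)=\beta(\beta-1)|s_j|^{\beta-2}$ along the trapped episode. Here I would exploit that $C_{m,k}$ accumulates on the weak–stable set $w^s_m$ of the periodic point $y_m$: setting $\cK=0$ in (\ref{DTdiff}) makes $D\tF$ upper–triangular with equal diagonal entries and nonzero off‑diagonal entry, i.e. a nondiagonalisable (parabolic) linear part, so the escape from a neighbourhood of $y_m$ is polynomial. Converting the profile $z=-|s|^\beta$ and the curve equation (\ref{eqsn}) into a recursion for the pair (collision position $s_j$, angular deviation from $y_m$) yields a discrete analogue of $\ddot s\sim m\,|s|^{\beta-1}$; the orbit of $x\in C_{m,k}$ enters, reaches a closest position $s_{\min}=s_{\min}(m,k)$, and leaves after exactly $k$ collisions, with $k$ and $s_{\min}$ linked through the parabolic asymptotics. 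Summing $\log\!\big(1+m^2|s_j|^{\beta-2}\big)$ over the episode by means of these asymptotics is what produces the exponent $3+\frac4{\beta-2}$ and cancels the residual $m$-dependence. This summation — matching the closest‑approach scale, the number of collisions, and the accumulated expansion to the profile exponent $\beta$ — is the step I expect to be the main obstacle and the most delicate bookkeeping.

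Finally, part~(2) needs a separate treatment of the first collision, since the $p$-metric degenerates on a vertical vector ($|dx|_p=\cos\varphi\,|dr|=0$). Applying $D\tF$ from (\ref{DTdiff}) to $(0,d\varphi_0)$ gives an image of slope $\cV_1=\cK(x_1)+\cos\varphi_1/\tau\sim\cK(x_1)$, an unstable vector, with Euclidean first‑step expansion $\tau/\cos\varphi_1\sim m^2$; for $j\ge1$ the curve is then unstable and the argument of part~(1) applies verbatim. Since the unstable first step contributes $1+\tau_0\cB_0\sim m^2\cK(x)$ whereas the vertical first step contributes $\sim m^2$, the two bounds differ exactly by the factor $\cK(x)$, which gives $\Lambda\sim m^2k^{3+\frac4{\beta-2}}$ in the vertical case.
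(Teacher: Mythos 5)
The outer frame of your argument (flat-wall collisions being transparent for the $p$-metric, the telescoping identity, the prefactor $m^2\cK(x)$ extracted from the entry collision, and the reduction of the vertical case to the unstable case) is broadly consistent with the paper. But the core reduction is wrong: the step ``$\tau_j\cB(x_j)\sim m^2\cK(x_j)$'' misapplies (\ref{cBKr}). Lemma \ref{KBbd} is proved for collisions in the cells $C_m$, i.e.\ collisions \emph{following a free flight of length $\sim m$}, where in $\cB=\cB^-+2\cK/\cos\varphi$ the curvature term dominates the incoming curvature $\cB^-\sim 1/m$. For the collisions $j\geq 1$ inside the trapped episode the situation is reversed: by the very parabolic asymptotics you invoke (proved in Lemma \ref{lem21}), $r_j\sim(mj)^{-\frac{2}{\beta-2}}$ during the entering phase, hence $m^2\cK(x_j)\sim m^2 r_j^{\beta-2}\sim j^{-2}$, while the flight (defocusing) term contributes $\tau_j\cB^-(x_j)\sim 1/j$. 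Consequently your reduced claim $\prod_{j}\bigl(1+m^2\cK(x_j)\bigr)\sim k^{3+\frac{4}{\beta-2}}$ is false: since $\sum_j j^{-2}<\infty$, that product is bounded uniformly in $k$ and $m$, and no amount of bookkeeping relating $s_{\min}$, $k$ and $\beta$ can extract a growing power of $k$ from it. The step you flagged as ``the main obstacle'' is not merely delicate; in your formulation it cannot succeed.

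What actually produces the exponent is the Riccati-type recursion (\ref{cBXm}) for the wavefront curvature: setting $u_j=\tau(x_j)\cB(x_j)$ one gets approximately $u_j=\frac{D}{j^2}+\frac{u_{j-1}}{1+u_{j-1}}$ with $D=\frac{2\beta(\beta-1)}{(\beta-2)^2}$, whose solution decays like $u_j\sim A/j$, where $A$ is the positive root of $A^2-A=D$, i.e.\ $A=\frac{2(\beta-1)}{\beta-2}$ (the paper's Lemma \ref{LmB}). It is the defocusing accumulated over the long free flights --- not the local curvature at each collision --- that dominates, and the $1/j$ (rather than $1/j^2$) decay is exactly what makes $\prod_{j\leq k''}(1+u_j)\gtrsim k^{A}$ over the entering half of the episode, as in (\ref{Lambdan''}). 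The exit half then needs a separate argument: the paper uses time reversibility together with a parallelogram/measure-invariance estimate to show $\Lambda^{(2)}_{m,k}\geq\Lambda^{(1)}_{m,k}/(ck)$, giving the total exponent $2A-1=3+\frac{4}{\beta-2}$ in (\ref{Lambdan2}); your proposal has no substitute for this step either. Neither the quadratic fixed-point relation $A^2-A=D$ nor the exit-phase loss of one power of $k$ is visible at the level of $\sum_j\log\bigl(1+m^2\cK(x_j)\bigr)$, so the proposal has a genuine gap at its central step.
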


The proof of this proposition is rather lengthy, and will be given in the Appendix using a similar
approach as in \cite{CZ2}.
\begin{lemma}\label{muC} For any $m\geq 1$, $k\geq 1$, the set $C_{m,k}$  have measure of
order $\cO(m^{-3-\frac{1}{\beta-1}}k^{-3-\frac{4}{\beta-2}})$.
\end{lemma}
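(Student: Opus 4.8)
The plan is to compute $\mu(C_{m,k})$ by estimating its two geometric dimensions ($r$-extent and $\varphi$-extent) together with the density of $\mu$ on the cell $C_m$, and then multiplying. Recall from \eqref{muCm} and Lemma \ref{prop2}(4)--(5) that $C_m$ has $r$-dimension of order $m^{-1/(\beta-1)}$ (the $r$-width of $U_m$ sets the relevant horizontal scale near the flat point), $\varphi$-dimension of order $m^{-1}$, and $\mu$-density of order $m^{-1}$, so that $\mu(C_m)\sim m^{-3-\frac{1}{\beta-1}}$. The strips $C_{m,k}$ partition (most of) $C_m$ as $k$ ranges over $\mathbb N$, so it suffices to see how the $\varphi$-extent of the strips decays in $k$; the $r$-extent and the density will be of the same order as for the whole cell $C_m$.

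The first step is to identify the $k$-th strip $C_{m,k}$ as the set of points experiencing exactly $k$ collisions inside the window $U_m$ before returning to $M$, bounded by the singularity curves $s_{m,k}$ and $s_{m,k+1}$ (and analogously $C'_{m,k}$ by $s'_{m,k},s'_{m,k+1}$). The key mechanism is the relation \eqref{cBt}, $\cB^-(\cF x)=1/(\tau+1/\cB(x))$, iterated along the $k$ nearly tangential reflections in $U_m$; the very small curvature $\cK_m\sim m^{-1+\frac{1}{\beta-1}}$ inside the window governs how slowly the angle changes from collision to collision, and this is what determines the position of $s_{m,k}$ as a function of $k$. Using the slope formula Lemma \ref{prop2}(1) and the curve equations \eqref{eqs'}--\eqref{eqsn}, I would show that the $\varphi$-thickness of the strip $C_{m,k}$ between consecutive singularity curves scales like $k^{-3-\frac{4}{\beta-2}}$. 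The cleanest route is to invoke the already-stated expansion estimate: by Proposition \ref{PrAux1}, the expansion factor of $F=\cF^{mk}$ on an unstable curve crossing $C_{m,k}$ is $\Lambda\sim m^{1+\frac{1}{\beta-1}}k^{3+\frac{4}{\beta-2}}$, while $F(C_{m,k})$ lands in $M$ with $\varphi$- and $r$-dimensions of order $1$; pulling back, the $\varphi$-thickness of $C_{m,k}$ must be of order $(\text{expansion})^{-1}$ in the contracting direction, yielding the factor $k^{-3-\frac{4}{\beta-2}}$.

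Assembling the pieces, $\mu(C_{m,k})\sim (\text{density})\cdot(r\text{-extent})\cdot(\varphi\text{-extent})\sim m^{-1}\cdot m^{-\frac{1}{\beta-1}}\cdot m^{-1}\cdot k^{-3-\frac{4}{\beta-2}}$, where the three $m$-powers combine exactly as in the cell estimate $\mu(C_m)\sim m^{-3-\frac{1}{\beta-1}}$ and the $k$-dependence comes entirely from the strip thickness; this gives the claimed order $\cO(m^{-3-\frac{1}{\beta-1}}k^{-3-\frac{4}{\beta-2}})$. One should also confirm the consistency check $\sum_{k\geq 1}\mu(C_{m,k})\sim m^{-3-\frac{1}{\beta-1}}\sim\mu(C_m)$, which holds because $3+\frac{4}{\beta-2}>1$, so the series in $k$ converges; this both validates the exponent and confirms that the strips account for essentially all of $C_m$.

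I expect the main obstacle to be the careful derivation of the $k^{-3-\frac{4}{\beta-2}}$ scaling of the strip thickness. This requires tracking the iterated collision dynamics inside the window $U_m$ through the recursion \eqref{cBt} for the wavefront curvature, converting that into the displacement of the singularity curves $s_{m,k}$ in the $(r,\varphi)$-coordinates, and matching exponents with the flat-point geometry encoded in \eqref{eqs'}--\eqref{eqsn}. In practice the sharpest and most economical path is to take the expansion asymptotics of Proposition \ref{PrAux1} as given and read off the strip thickness by duality with the return image, since the exponent $3+\frac{4}{\beta-2}$ is precisely the one appearing there; the remaining work is then the bookkeeping of the $m$-powers through the density and the two spatial dimensions, which is routine.
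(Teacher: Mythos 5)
Your overall strategy --- read off the $\varphi$-thickness of the strip $C_{m,k}$ by dividing the size of its $F$-image by the expansion factor of Proposition \ref{PrAux1}, then multiply thickness, $r$-extent and density --- is the same as the paper's, but three of your inputs are wrong, and as a consequence the arithmetic does not produce the claimed exponent. First, you attribute to $C_m$ the dimensions of $U_m$: by \eqref{muCm} the cell $C_m=M_m\setminus U_m$ has $\mu(C_m)\sim m^{-3}$, not $m^{-3-\frac{1}{\beta-1}}$, and the correct geometric data to use are those of $M_m$ from Lemma \ref{prop2}(4), namely $r$-length $\cO(m^{-1/\beta})$ and density $\cO(m^{-1+1/\beta})$. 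Relatedly, your ``consistency check'' $\sum_{k\geq 1}\mu(C_{m,k})\sim\mu(C_m)$ is false: the paper notes right after the lemma that $\sum_{k\geq 1}\mu(C_{m,k})\sim m^{-3-\frac{1}{\beta-1}}\sim \mu(U_m)\ll \mu(C_m)$, i.e.\ the strips with $k\geq 1$ carry only a vanishing fraction of $C_m$, which is dominated by $C_{m,0}$. Second, your claim that $F(C_{m,k})$ has $r$- and $\varphi$-dimensions of order $1$ is wrong: by time-reversibility the image cells mirror the structure near the window, and the long sides of $F(C_{m,k})$ have length $\sim m^{-\frac{1}{\beta-1}}$; moreover, to measure the strip thickness one should push forward a \emph{vertical} curve and use part (2) of Proposition \ref{PrAux1} ($\Lambda\sim m^{2}k^{3+\frac{4}{\beta-2}}$), not part (1). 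The correct thickness is then $h_{m,k}\sim m^{-\frac{1}{\beta-1}}/\bigl(m^{2}k^{3+\frac{4}{\beta-2}}\bigr)= m^{-2-\frac{1}{\beta-1}}k^{-3-\frac{4}{\beta-2}}$, as in \eqref{hmk}, rather than your $m^{-1}k^{-3-\frac{4}{\beta-2}}$.

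Third, and most concretely, your final product does not equal what you claim it equals: $m^{-1}\cdot m^{-\frac{1}{\beta-1}}\cdot m^{-1}= m^{-2-\frac{1}{\beta-1}}$, so your assembly actually yields $\mu(C_{m,k})\sim m^{-2-\frac{1}{\beta-1}}k^{-3-\frac{4}{\beta-2}}$, a full factor of $m$ larger than the stated bound, yet you assert it matches. With the corrected inputs the bookkeeping closes exactly as in the paper: density times $r$-length times thickness gives $m^{-1+\frac{1}{\beta}}\cdot m^{-\frac{1}{\beta}}\cdot m^{-2-\frac{1}{\beta-1}}k^{-3-\frac{4}{\beta-2}}= m^{-3-\frac{1}{\beta-1}}k^{-3-\frac{4}{\beta-2}}$. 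So the idea of invoking Proposition \ref{PrAux1} by duality is right, but the cell geometry, the image size, and the exponent accounting all need to be repaired before this constitutes a proof.
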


\begin{proof}
Due to the time-reversibility of the billiard dynamics, the
singular curves in $F M$ have a similar structure. Furthermore,  the  short sides of $C_{m,k}$ stretch completely in $U_m$ under $\tF^{m(k-1)}$, while under $\cF^{mk} $ they are transformed into long sides of $F(C_{m,k})$, with length $\sim m^{\frac{1}{1-\beta}}$. Let
$h_{m,k}$ denote the length  of a vertical curve $W$ that stretch completely in $C_{m,k}$.
Thus by Proposition  \ref{PrAux1}, the expansion factor on $W$ is of order $\cO(m^2k^{3+\frac{4}{\beta-2}})$. This implies that
\beq\label{hmk}h_{m,k} \sim\frac{|FW|}{m^2
k^{3+\frac{4}{\beta-2}}}\sim \frac{1}{m^{2+\frac{1}{\beta-1}}
k^{3+\frac{4}{\beta-2}}}.\eeq
It follows from Lemma \ref{prop2} that the length of $C_{m,k}$ is $\cO(m^{-1/\beta})$, and  the density on  $C_{m,k}$
is of order $\cO(m^{-1+\frac{1}{\beta}})$.
 According to  (\ref{hmk}),  the
measures of both $C_{m,k}$ and $C'_{m,k}$ are of order
$\cO(\frac{1}{m^{3+\frac{1}{\beta-1}}k^{3+\frac{4}{\beta-2}}})$.
\end{proof}

It follows from above estimation, we know that
$$\cup_{k\geq 1} \mu(C_{m,k})\sim m^{-3-\frac{1}{\beta-1}}\sim \mu(U_m)$$
Note that from (\ref{muCm}), we have $\mu(C_m)\sim m^{-3}$. This implies that in this sequence $\{C_{m,k}. k\geq 0\}$, only the first set $C_{m,0}$ dominates, as \beq\label{Cm0}\mu(C_{m,0})\sim \mu(C_m)\sim m^{-3}\eeq

Combining with Lemma \ref{muC} and (\ref{Rlevel}),  for any $n\geq 1$, we know that
the set $$(x\in M\,:\, R(x) \geq n)\subset (\cup_{m=n}^{\infty} C_m) \bigcup \left(\cup_{m=1}^{n-1}\sum_{k\geq n/m} C_{m,k}\right)$$
Note that
\begin{align}\label{muMm}
\sum_{m=1}^{n-1}\sum_{k\geq n/m}\mu( C_{m,k})&\sim \sum_{m=1}^{n-1}\sum_{k\geq n/m}\frac{1}{m^{3+\frac{2}{\beta-1}}k^{3+\frac{4}{\beta-2}}}\sim  \frac{1}{n^{2+\frac{1}{\beta-1}}}\end{align}
According to (\ref{Cm0}) we know that
$$\mu(C_{m,0})\sim \mu(R=m)\sim m^{-3}$$
 It is
immediate that for any large $n$, the set $C_{n,0}$ not only dominates in $(R=n)$.  but also  dominates the set $M_n=(x\in \tM\,:\, \tau(x)=n)$. This implies that
 \begin{align}\label{Mpol2}
 \mu&(x\in M:\ R(x)\geq  n)
	\sim   \mu(x\in \tM\,:\, \tau(x)\geq n)\sim \,  \frac{1}{n^2}.\end{align}

This verifies condition (\textbf{F2}) with $\eta=1$ for the system $(\cF,\cM)$.

Moreover, for the intermediate system $(\tF,\tM)$, if we let $\tilde R:M\to \mathbb{N}$ to be the first return time, such that
$$F x=\tF ^{\tilde{R}(x)}(x)$$
Then for any $n\geq 1$,
$$(\tilde R=n)=\cup_{m=1}^{\infty} C_{m,n}$$ Now we use Lemma \ref{muC}, to get
\beq\label{leveltR}\mu(\tilde R\geq n)=\sum_{m\geq 1}\sum_{k\geq n} \mu(C_{m,k})\sim n^{-2-\frac{4}{\beta-2}}\eeq
This implies that $\eta=1+\frac{4}{\beta-2}=\frac{\beta+2}{\beta-2}$ for system $(\tF,\tM)$.

\section{Singularity of the reduced map}
 Let
$$\cS_F=(\tilde\cS_1\cap M)\bigcup \cup_{m\geq 1}\left(\partial U_m \cup F^{-1}\partial U_m\right)$$ For a fixed $m\geq 1$, let $W$ be an unstable curve contained in $ C_m\setminus \cS_F$. It follows from (\ref{cKCm}) that  the curvature $\cK$ is bounded away from zero in  cell $C_m$. According to (\ref{penorm})  the expansion factor at $x\in W$  becomes very large, as $\cos\varphi_1$ gets small. This implies that  the expansion factors along  $W$ maybe highly nonuniform. To overcome this difficulty we divide $M$ into horizontal strips as introduced in \cite{BSC90, BSC91}.  More precisely, one divides $M $ into countably many sections (called
\emph{homogeneity strips}) defined by
$$
	\bH_k=\{(r,\varphi)\in M \colon \pi/2-k^{-2}<\varphi <\pi/2-(k+1)^{-2}\}
$$
and
$$
	\bH_{-k}=\{(r,\varphi)\in M \colon -\pi/2+(k+1)^{-2}<\varphi < -\pi/2+k^{-2}\}
$$
for all $k\geq k_0$ and \beq \label{bbH0}
	\bH_0=\{(r,\varphi)\in M \colon -\pi/2+k_0^{-2}<\varphi <
	\pi/2-k_0^{-2}\},
\eeq here $k_0 \geq 1$ is a fixed (and usually large) constant.

Let $\cS_H$ be the collection of all boundaries of these homogeneity strips. Then $F^{-1} \cS_H$ is a countable union of stable curves that are almost parallel to each other and accumulate on the singular curves in $\tilde S_1$. This implies that there are \textbf{three sequences of singular curves} in each $m$-cell $C_m$: the sequences $\{s_{m,k}\}$ and $s_{m,k}'$ accumulate to the curve $w_m^s$ and  the third one belongs to $F^{-1} \cS_H$ that converges to $\bs_{m+1}$. Define
\beq\label{singular1}\cS_1=\cS_H\cup \cS_F,\,\,\,\,\,\,\,\,\cS_{-1}= F\cS_1\cup \cS_1.\eeq Clearly
$M\setminus \cS_1$ is a dense set in $M$. Whenever we say `the singularity of $F$', we refer to $\cS_1$ although curves in $\cS_H$ are really artificial singular curves for the map.

Since billiards have singularities, if the orbit of $x$ approaches to the singularity set $\cS_1^{ }$ too fast under $F$, then $x$ may not have a stable manifold. For the finite horizon case, one can show that	$m$-a.e. $x\in M$ does have a stable (resp. unstable) manifold. However the situation is much complicated here as we have more accumulated sequences of singular curves. Indeed we will first show that small neighborhood of the singular set has small measure.
\begin{lemma}  For any $\delta>0$, the $\delta-$neighborhood of $\cS_{\pm 1}^{ }$ has measure:
 \beq\label{mu0m}\mu(B_{\delta}(\cS_{\pm 1}^{ }))\leq C  \delta^\frac{2\beta}{3\beta-2}\eeq
  Here $B_{\delta}(\cS_{\pm 1}^{ })=\{x\in M\,:\, d_M(x,\cS_{\pm 1}^{ })\leq \delta\}$ for any $\delta>0$, and $C>0$ is a constant.
\end{lemma}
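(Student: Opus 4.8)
The plan is to exploit the explicit cell structure of the singular set. Since $\cS_{-1}=F\cS_1\cup\cS_1$ with $\cS_1=\cS_H\cup\cS_F$, and since the billiard map is time–reversible (the involution $(r,\varphi)\mapsto(r,-\varphi)$ conjugates $F$ to $F^{-1}$ and preserves both $\mu$ and the Euclidean metric up to bounded factors), it suffices to bound $\mu(B_{\delta}(\cS_1))$: the image $F\cS_1$ is part of the singular set of $F^{-1}$, which has the mirror–symmetric cell structure and hence obeys the same neighborhood estimate. I would decompose $M=\bigcup_m C_m$ and estimate $\mu(B_{\delta}(\cS_1)\cap C_m)$ cell by cell, then sum in $m$. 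Inside $C_m$ the relevant curves are (i) the cell and window boundaries $\bs_m,\bs_{m+1},\partial U_m$; (ii) the two sequences $\{s_{m,k}\},\{s'_{m,k}\}\subset F^{-1}\partial U_m$ accumulating on $w^s_m$; and (iii) the sequence $F^{-1}\cS_H$ accumulating on $\bs_{m+1}$. The dominant term will come from (ii).

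The key device is a thin/thick dichotomy applied to the strips $C_{m,k}$ (and, symmetrically, $C'_{m,k}$). By (\ref{hmk}) the transverse width of $C_{m,k}$ is $h_{m,k}\sim m^{-b}k^{-a}$ with $b=2+\tfrac{1}{\beta-1}$ and $a=3+\tfrac{4}{\beta-2}=\tfrac{3\beta-2}{\beta-2}$, while by Lemma~\ref{muC} its measure is $\mu(C_{m,k})\sim m^{-1}h_{m,k}$, the $r$–length and the density on $C_m$ being $m^{-1/\beta}$ and $m^{-1+1/\beta}$. I fix the threshold $k^{*}=k^{*}(m)$ by $h_{m,k^{*}}\sim\delta$, i.e.\ $k^{*}\sim m^{-b/a}\delta^{-1/a}$. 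For the thin strips $k>k^{*}$ one has $h_{m,k}<\delta$, so the whole strip lies in $B_{\delta}(\cS_1)$ and contributes $\sum_{k>k^{*}}\mu(C_{m,k})\sim m^{-p}(k^{*})^{1-a}$ with $p=3+\tfrac{1}{\beta-1}$; for the thick strips $k\le k^{*}$ only a $\delta$–collar of each bounding curve $s_{m,k}$ meets $B_{\delta}(\cS_1)$, contributing $\sim m^{-1/\beta}\cdot\delta\cdot m^{-1+1/\beta}=m^{-1}\delta$ per curve and $\sim k^{*}m^{-1}\delta$ in total. Using the identity $p-b=1$, both contributions equal $m^{-1-b/a}\,\delta^{(a-1)/a}$ per cell.

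Summing over cells then yields the stated bound. Since $b/a>0$ the series $\sum_m m^{-1-b/a}$ converges, so the cells with $m\le\delta^{-1/b}$ (those carrying a genuine threshold $k^{*}\ge 1$) produce $\mu(B_{\delta}(\cS_1))\sim\delta^{(a-1)/a}$, and $\tfrac{a-1}{a}=1-\tfrac{\beta-2}{3\beta-2}=\tfrac{2\beta}{3\beta-2}$ is precisely the claimed exponent. The remaining cells $m>\delta^{-1/b}$ have all strips thin, contributing $\sum_{m>\delta^{-1/b}}\sum_k\mu(C_{m,k})\sim\sum_{m>\delta^{-1/b}}m^{-1-b}\sim\delta$, which is subdominant because $\tfrac{2\beta}{3\beta-2}<1$; similarly the cell and window boundaries of type (i) contribute $\cO(\delta)$ after summation, and the very small cells $m>\delta^{-\beta}$, which are absorbed wholesale, contribute only $\sum_{m>\delta^{-\beta}}m^{-3}\sim\delta^{2\beta}$.

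I expect the main obstacle to be controlling the contribution (iii) of $F^{-1}\cS_H$ and confirming that it does not dominate. This forces one to convert the homogeneity partition $\bH_k$ (whose widths are $\sim k^{-3}$) into the spacing of its $F$–preimages accumulating on $\bs_{m+1}$, which requires the expansion estimate of Proposition~\ref{PrAux1} together with the bounded–curvature and distortion properties of Lemma~\ref{prop2}(4); one then reruns the same thin/thick dichotomy and checks that the resulting $\delta$–exponent is no smaller than $\tfrac{2\beta}{3\beta-2}$. A secondary technical point is that replacing a curve by its Euclidean $\delta$–neighborhood is legitimate only up to bounded distortion, which is exactly what the uniform curvature bound and the controlled density variation within each $C_m$ guarantee.
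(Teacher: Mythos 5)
Your argument is, at its core, the paper's own argument: the paper likewise decomposes $M$ into the cells $C_m$, absorbs the far cells wholesale (via the threshold $m_\delta=\delta^{-1/2}$ coming from the $\varphi$-width $m^{-2}$ of $C_m$, at total cost $\cO(\delta)$), and runs exactly your thin/thick dichotomy on the strips $C_{m,k}$, with the threshold defined by $h_{m,k}\sim\delta$ and the tail sum $\sum_{k\ge k_m''}\mu(C_{m,k})$ producing the exponent $\tfrac{2\beta}{3\beta-2}$ (its Case II). On this dominant term your bookkeeping is in fact tighter than the paper's: you also charge the $\delta$-collars of the thick curves $s_{m,k}$, $k\le k^{*}$, which the paper never mentions, and you check that thin and thick contributions are both $\sim m^{-1-b/a}\delta^{(a-1)/a}$ per cell with a sum over $m$ that converges outright. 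By contrast, the paper's Case II uses an $m$-exponent for $h_{m,k}$ inconsistent with (\ref{hmk}) and then drops a factor $m^{\frac{2\beta}{\beta-1}}$ in passing from $(k_m'')^{-\frac{2\beta}{\beta-2}}$ to its stated per-cell bound; your computation, based on the correct (\ref{hmk}) and the identity $p=b+1$ (i.e.\ $\mu(C_{m,k})\sim m^{-1}h_{m,k}$), is the one that actually closes this part of the estimate.

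The genuine gap is your item (iii), which you defer with ``one then reruns the same thin/thick dichotomy and checks that the resulting $\delta$-exponent is no smaller.'' That check is not a formality: the curves of $F^{-1}\cS_H$ accumulate on $\bs_{m+1}$ in every cell, nothing a priori prevents their collective $\delta$-neighborhood from dominating, and the lemma is simply unproved until this contribution is bounded. The paper devotes its Case I to it: the spacing of the strips $F^{-1}\bH_k\cap C_m$ comes from the one-collision expansion $\tau/\cos\varphi_1\sim mk^2$ and the slope bound of Lemma \ref{KBbd} (giving $|W_k|\sim m^{-1}k^{\frac{1-3\beta}{\beta-1}}$), while $\mu(F^{-1}\bH_k\cap C_m)=\mu(\bH_k\cap FC_m)\sim k^{-6-\frac{2}{\beta-1}}$ follows from invariance of $\mu$ together with the dimensions of $\bH_k\cap FC_m$; one then thresholds and sums exactly as in your type (ii). Note also that the tool you cite for this step, Proposition \ref{PrAux1}, is the wrong one: that proposition governs the multi-collision passage through the window $U_m$ and is what underlies $h_{m,k}$, i.e.\ your type (ii); type (iii) needs only the single-collision estimates above. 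Finally, be aware that this case is the delicate one: summing the paper's own displayed per-cell bounds $(m\delta)^{\frac{5\beta-3}{3\beta-1}}$ over $m\le\delta^{-1/2}$ gives $\delta^{\frac{\beta-1}{3\beta-1}}$ rather than the asserted $\cO(\delta)$, so the exponent bookkeeping for the homogeneity sequence must be carried out explicitly and carefully for the stated power of $\delta$ to survive; it cannot be left as an expectation.
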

\begin{proof}
We first need to find the smallest $m_{\delta}$ such that $\cup_{m\geq m_{\delta}} C_m\subset \mu(B_{\delta}(\cS_{ 1}^{ }))$.
According to Lemma \ref{prop2}, the height of $C_m$ is approximately $\cO(m^{-2})$. Thus we get $m_{\delta}=\delta^{-1/2}$. This implies that
$$\mu(\cup_{m\geq m_{\delta} }C_m)=\cO(m_{\delta}^{-2})=\cO(\delta)$$

Next we consider for  $m<m_{\delta}$. In each $C_m$, there are two sequences of cells bounded by singular curves in $\cS_1$: one of which  consists of curves  ${\gamma_k}\subset F^{-1}\{H_k\}$ that approaching $\bs_{m+1}$; and the other is $\{\bs_{m,k}\}$ that approaching $w_m^n$.

 \textbf{Case I.} We assume $W$ to be a vertical curve intersecting a converging sequence  ${\gamma_k}\subset F^{-1}\{H_k\}$ that approaching $\bs_{m+1}$. Then $FW$ is contained in the smaller region $C_m''$ in $C_m$. Let $W_k$ be a vertical curve completely stretch in $F^{-1}(H_k)$. Then  the expansion on $W_k$ is approximately  $\tau/\cos\varphi_1\sim  mk^2$.  Since the $\varphi$-dimension of $FW_k$ is approximately $k^{-3}$. By Lemma \ref{KBbd}, we know that the slope of tangent vector of $FW_k$ is approximately $\cos\varphi_1^{\frac{\beta-2}{\beta-1}}\sim k^{-\frac{2\beta-4}{\beta-1}}$. Thus
           $$|W_k|\sim k^{\frac{-3\beta+1}{\beta-1}}m^{-1}$$
           Let $k_m$ be the smallest integer such that $F^{-1}H_k\cap C_m$ is not empty. As it follows from Lemma \ref{prop2} that the height of $C_m$ is approximately $m^{-2}$, thus
               $$\sum_{k\geq k_m}|W_k|=\sum_{k\geq k_m}k^{\frac{-3\beta+1}{\beta-1}}m^{-1}\sim m^{-2}$$
                   This implies that
                   $$k_m\sim m^{\frac{\beta-1}{2\beta}} $$

           Define $k_m'$ such that
           $$\bigcup_{k\geq k_m'} (F^{-1}H_k\cap C_m)\subset \mu(B_{\delta}(\cS_{\pm 1}^{ }))$$
               Then $k_m'^{\frac{-3\beta+1}{\beta-1}}m^{-1}=\delta$, which implies that $$k_m'=\left(\frac{1}{m\delta}\right)^{\frac{\beta-1}{3\beta-1}}$$
               Since $ H_k$ is contained in $C_m''$, so it has $\varphi$-dimension$\sim k^{-3}$, length $\sim |FW_k|\sim k^{-1-\frac{2}{\beta-1}}$ and density $\sim k^{-2}$.
               Then $$\sum_{k\geq k_m'} \mu(F^{-1}H_k\cap C_m)\sim \sum_{k\geq k_m'} \mu(H_k\cap C''_m)\sim c  k_{m'}^{-5-\frac{2}{\beta-1}}$$
This implies that
$$\sum_{m= 1}^{m_{\delta}} \sum_{k\geq k_m'} \mu(F^{-1}H_k\cap C_m)+\sum_{m\geq m_{\delta}} \mu(C_m)\leq C  \delta$$
\vspace{0.2in}

\textbf{Case II.}  We assume a vertical curve $W$ intersects a sequence $\{\bs_{m,k}\}$ that approaching $w_m^n$. Assume $$\delta=h_{m,k_m''}=m^{-\frac{3\beta-2}{\beta-1}}
k_m''^{-\frac{3\beta-2}{\beta-2}}$$ Then
$$k_m''=\left(\delta^{-1} m^{-\frac{2\beta-1}{\beta-1}}\right)^{\frac{\beta-2}{3\beta-2}}=\frac{ 1}{\delta^\frac{\beta-2}{3\beta-2} m^{\frac{\beta-2}{\beta-1}}} $$
By Lemma \ref{muC}, the measure of these sets in $C_m$ satisfies:
      $$\sum_{k\geq k_m''} \mu(C_{m,k})\leq C m^{-3-\frac{1}{\beta-1}}k_{m''}^{\frac{2\beta}{2-\beta}}=C m^{-3-\frac{1}{\beta-1}}\delta^\frac{2\beta}{3\beta-2} $$

Combining the above facts, we have
\begin{align*}
\mu(B_{\delta}(\cS_{\pm 1}^{ }))&\leq \sum_{m= 1}^{m_{\delta}}\left( \sum_{k\geq k_m'} \mu(F^{-1}H_k\cap C_m)+\sum_{k\geq k_m''} \mu(C_{m,k})\right)+\sum_{m\geq m_{\delta}} \mu(C_m)\leq C  \delta^\frac{2\beta}{3\beta-2}
    \end{align*}

\end{proof}
 Since we have added the boundaries of the homogeneity strips and their preimages in the singular set, the above results implies that for any $\delta>0$, the $\delta-$neighborhood of $\cS_{\pm 1}^{ }$ has measure:
 \beq\label{mu0m1}\mu(B_{\delta}(\cS_{\pm 1}^{ }))=\cO(\delta^{a})\eeq with $a=\frac{2\beta}{3\beta-2}$.
 For any $x\in M$, let $r^{\sigma}_{ }(x)=d_{W^{\sigma}(x)}(x,\partial W^{\sigma}( x))$, where $W^{\sigma}(x)$ is the stable (resp. unstable) manifold that contains $x$, for $\sigma\in \{s,u\}$.

\begin{lemma} For any small $\delta>0$, the set $(r^{s}_{ }(x)<\delta)$ has measure:
\beq\label{rus2}
\mu(r^{s}_{ }(x)<\delta)\leq C\delta^{a}\eeq
\end{lemma}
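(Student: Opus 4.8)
The plan is to reduce the estimate on short stable manifolds to the neighborhood bound (\ref{mu0m1}) already established for the singularity set. The starting point is the standard characterization of the endpoints of a stable manifold: a point $y$ lies on $\partial W^s(x)$ only because its forward orbit is eventually cut by a singularity, so that
$$\partial W^s(x)\subset \bigcup_{n\geq 0} F^{-n}\cS_1.$$
Consequently, if $r^s(x)<\delta$ then there is a nearest endpoint $y\in\partial W^s(x)$ with $d_{W^s(x)}(x,y)<\delta$ and a first integer $n=n(x)\geq 0$ such that $F^n y\in\cS_1$. The segment of $W^s(x)$ joining $x$ to $y$ is a homogeneous stable curve that remains smooth (crosses no singularity of $F$) up to time $n$, so the uniform contraction of $F$ applies to the pair $x,y$ for all iterates up to $n$.

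First I would record the uniform contraction of $F$ on homogeneous stable manifolds: by the hyperbolicity established in Section~4 (together with the homogeneity strips $\cS_H$ built into $\cS_1$), there are constants $C_1\geq 1$ and $\vartheta\in(0,1)$ with $d(F^k x,F^k y)\leq C_1\vartheta^k\,d(x,y)$ for $x,y$ on one stable manifold and all $k$ before it is cut. Applying this with $k=n$ gives $d(F^n x,F^n y)\leq C_1\vartheta^n\delta$, and since $F^n y\in\cS_1\subset\cS_{\pm1}$ we conclude $F^n x\in B_{C_1\vartheta^n\delta}(\cS_{\pm1})$. Partitioning $\{r^s(x)<\delta\}$ according to the value of $n(x)$ therefore yields
$$\{x\in M:\ r^s(x)<\delta\}\ \subset\ \bigcup_{n\geq 0} F^{-n}\big(B_{C_1\vartheta^n\delta}(\cS_{\pm1})\big).$$
Since $F$ preserves $\hmu$, it preserves $\mu$ on subsets of $M$, so (\ref{mu0m1}) gives $\mu\big(F^{-n}B_{C_1\vartheta^n\delta}(\cS_{\pm1})\big)=\mu\big(B_{C_1\vartheta^n\delta}(\cS_{\pm1})\big)\leq C(C_1\vartheta^n\delta)^a$. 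Summing the geometric series in $\vartheta^{na}$ produces $\mu(r^s(x)<\delta)\leq C'\delta^a$ with $a=\frac{2\beta}{3\beta-2}$, as claimed; the finitely many indices $n$ for which $C_1\vartheta^n\geq 1$ contribute only a bounded multiple of $\delta^a$ and cause no trouble.

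The main obstacle is making the contraction genuinely uniform in the Euclidean metric rather than merely in the $p$-metric (\ref{pnorm}): near tangential collisions and near the flat points the factor $\cos\varphi/\cos\varphi_1$ in (\ref{penorm}) can be large, so a single step of $F$ need not contract Euclidean distances uniformly. This is exactly why the homogeneity strips $\bH_k$ and their preimages were adjoined to $\cS_1$ in (\ref{singular1})\,: restricting to homogeneous stable manifolds bounds the one-step distortion and restores uniform exponential contraction, which is what the summation above consumes. A secondary point to verify carefully is the endpoint characterization $\partial W^s(x)\subset\bigcup_{n\geq0}F^{-n}\cS_1$ in the presence of the three accumulating families of singular curves in each cell $C_m$; but once the $\delta$-neighborhood bound (\ref{mu0m1}) is in hand, the remaining steps are the standard growth-lemma summation and present no real difficulty.
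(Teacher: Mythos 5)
Your proposal is correct and follows essentially the same route as the paper: cover $\{r^s(x)<\delta\}$ by $\bigcup_{n\geq 0}F^{-n}B_{c\delta\vartheta^{n}}(\cS_{\pm1})$ using uniform contraction along stable manifolds, then apply $F$-invariance of $\mu$ on $M$ together with the neighborhood estimate (\ref{mu0m1}) and sum the geometric series. The paper's proof is just a terser version of this argument (with $\Lambda_p^{-n}$ in place of $C_1\vartheta^n$ and $\cS_{-1}$ in place of $\cS_{\pm1}$); your added discussion of the endpoint characterization and of homogeneity restoring uniform contraction in the Euclidean metric fills in details the paper leaves implicit.
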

\begin{proof}  It follows from hyperbolicity there exists $c>0$ such that for any small $\delta>0$,
   $$(r^{s}_{ }(x)<\delta)\subset \cup_{n\geq 0}  F^{-n}_{ }B_{c\delta\Lambda^{-n}_p}(\cS^{ }_{-1}).$$
   We apply the measure $\mu$ and get
   $$\mu(r^{s}_{ }(x)<\delta)\leq \sum_{n\geq 0}  F^{n}_{ }\mu(B_{c\delta\Lambda^{-n}_p}(\cS^{ }_{-1})) \leq \sum_{n\geq 0}  \mu(B_{c\delta\Lambda^{-n}_p}(\cS^{ }_{-1})) \leq C\delta^{a}$$
   for some $C>0$.

 \end{proof}
 The above lemma  implies that almost every point in $M$ has a regular stable (resp. unstable) manifold and there are plenty of reasonable long stable (resp. unstable)  manifolds for $F_{ }$.

\section{Exponential decay rates for the reduced system}

According to the general scheme proposed in Section 2, we first need to check condition (\textbf{F1}), i.e. to prove that the reduced system $(F,M,\hat\mu)$ enjoys exponential decay of correlations.
Here we use a simplified  method to prove exponential decay of
correlations for our reduced billiard map. It is mainly based on
recent results in \cite{ Y98, C99, CZ}.

\begin{lemma}\label{onestep}(\textbf{One-step expansion estimate}) Assume $\beta\in(2, \infty)$, let $W$ be a short unstable curve in $M$ and $\{W_i\}$ be the collection of smooth components in $W$. Then \beq
   \liminf_{\delta_0\to 0}\
  \sup_{W\colon |W|<\delta_0}\sum_{i\geq 1}
\frac{|W_i|}{|FW_i|}<1,
	  \label{step1}
\eeq where the supremum is taken over unstable curves $W\subset M\setminus \cS_1$ and $\Lambda_i$,
$i \geq 1$, denote the minimal local expansion factors of the
connected component $W_i$ under the map $F$.
\end{lemma}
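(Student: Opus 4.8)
The plan is to bound the fragmentation sum $\sum_{i\ge1}|W_i|/|FW_i|$ by splitting the phase space into the uniformly hyperbolic part and a neighborhood of the IH point $x_p=(0,\pi/2)$, and in the latter grouping the smooth components $W_i$ according to which of the three accumulating sequences of singular curves of Section 5 they abut. First I would cut $W$ by $\cS_1$ together with the preimages $F^{-1}\cS_H$ of the homogeneity boundaries, so that every image $FW_i$ is a homogeneous unstable curve; by the distortion bounds for this system (obtained as in \cite{CM,CZ2}) the average factor $\Lambda_i:=|FW_i|/|W_i|$ is comparable to the minimal local expansion along $W_i$, so it suffices to estimate $\sum_i\Lambda_i^{-1}$. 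Fix a large $m_0$ and set $\Omega_0=\{x\in M:\tau(x)\le m_0\}$. On $\Omega_0$ the free path is bounded, the curvature is bounded below by (\ref{cKCm}), and $(F,M)$ is, up to finitely many ordinary collisions, a uniformly hyperbolic dispersing billiard with homogeneity layers; the classical one-step estimate (\cite{Y98,C99,CM,CZ}) then gives $\sum_{W_i\subset\Omega_0}\Lambda_i^{-1}\le\theta_0$ for some $\theta_0<1$.

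It remains to bound the contribution of the components lying in the cells $C_m$ with $m>m_0$, i.e.\ near $x_p$, where the three sequences of singular curves accumulate simultaneously. Here I would estimate three families separately. For components $W_i\subset C_{m,k}$ with $k\ge1$ that enter a window $U_m$, Proposition \ref{PrAux1} gives $\Lambda_i\ge c\,m^{1+\frac{1}{\beta-1}}k^{3+\frac{4}{\beta-2}}$; since a single unstable curve meets each $C_{m,k}$ in a bounded number of components,
\[
\sum_{m>m_0}\sum_{k\ge1}\Lambda_i^{-1}\le C\Big(\sum_{m>m_0}m^{-(1+\frac{1}{\beta-1})}\Big)\Big(\sum_{k\ge1}k^{-(3+\frac{4}{\beta-2})}\Big)\le C'\,m_0^{-\frac{1}{\beta-1}}.
\]
For components that do not enter a window but straddle a cell boundary $\bs_m$ ($m>m_0$), the $p$-expansion over the single reflection in $C_m$ is $1+\tau\cB\gtrsim m^{1+\frac{1}{\beta-1}}$ by (\ref{DTvu1}) and Lemma \ref{KBbd}, so these pieces also contribute $O(m_0^{-1/(\beta-1)})$. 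Finally, for components whose image is near-grazing, landing in a strip $\bH_k$, the extra factor $\cos\varphi/\cos\varphi_1$ in (\ref{penorm}) only increases the expansion relative to the $p$-metric, so the homogeneity refinement near $x_p$ is absorbed into the previous two bounds. Altogether the near-$x_p$ contribution is $\le C''\,m_0^{-1/(\beta-1)}<\tfrac12(1-\theta_0)$ once $m_0$ is large, because both exponents $1+\frac1{\beta-1}$ and $3+\frac4{\beta-2}$ exceed $1$ (this is exactly where $\beta>2$ enters).

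Assembling the two regions, any short curve $W$ decomposes into an $\Omega_0$-part and a near-$x_p$ part, each bounded as above, so $\sup_{|W|<\delta_0}\sum_i|W_i|/|FW_i|\le\theta_0+\tfrac12(1-\theta_0)<1$ for every $\delta_0$; taking $\liminf_{\delta_0\to0}$ yields (\ref{step1}).

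The main obstacle is the window family $\{C_{m,k}\}$: this is the genuinely new feature, and for a curve straddling the removed stable set $w^s_m$ it produces an infinite cascade of components, so the whole argument hinges on the sharp expansion lower bound of Proposition \ref{PrAux1} dominating this proliferation. The delicate point is the simultaneous accumulation of all three sequences on one short curve near $x_p$; making the bookkeeping uniform there, so that the window cascade, the cell-boundary cascade and the homogeneity cascade sum together without the total returning above $1$, is the crux and is precisely what the condition $\beta>2$ secures.
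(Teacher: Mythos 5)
Your proposal takes a genuinely different route from the paper's, and the route has a gap at its core. The paper never uses a ``uniformly hyperbolic core plus tail'' splitting with a cutoff $m_0$. Instead, in each of its three cases it identifies the \emph{smallest index} of the accumulating family of singular curves that the short curve $W$ actually crosses, expresses that index in terms of $|W|$, and bounds the resulting tail sum by a positive power of $|W|$: $C|W|^{\frac{\beta-1}{2\beta}}$ when $W$ meets the homogeneity cascade (Case I), $Cm^{\frac{1}{\beta-1}}|W|\le C\sqrt{|W|}$ when $W$ meets the window cascade $\{s_{m,k}\}$ (Case II, via the strip widths (\ref{hmk})), and $C|W|$ when $W$ crosses countably many cells (Case III). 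All the smallness comes from $\delta_0\to 0$, which is exactly what the $\liminf$ in (\ref{step1}) provides. Your scheme instead seeks bounds uniform in $|W|$, made small only by taking $m_0$ large, and this cannot close the argument. Concretely: the bound $\theta_0$ you assert on $\Omega_0=\{\tau\le m_0\}$ is unjustified, because $F$ restricted to $\Omega_0$ is \emph{not} a classical dispersing billiard --- every cell $C_m$ with $m\le m_0$ contains its own infinite window cascade $\{C_{m,k}\}_{k\ge 1}$, whose singular curves accumulate on $w^s_m$ inside $\Omega_0$. A short curve straddling $w^s_m$ with $m\le m_0$ is cut into infinitely many pieces, none of which appear in your sum over $m>m_0$, and the classical one-step estimate says nothing about them; the uniform bound available from Proposition \ref{PrAux1}, namely $\sum_{k\ge 1}\bigl(c\,m^{1+\frac{1}{\beta-1}}k^{3+\frac{4}{\beta-2}}\bigr)^{-1}$, is merely a constant with uncontrolled $c$, not something $<1-\theta_0$. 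The only mechanism that makes this contribution small is that $|W|<\delta_0$ forces the smallest crossed index $k''_m(W)\to\infty$ --- i.e.\ precisely the paper's Case II computation, which your argument omits.

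The second gap is the claim that the factor $\cos\varphi/\cos\varphi_1$ in (\ref{penorm}) ``only increases the expansion relative to the $p$-metric,'' so that homogeneity cutting is absorbed. This is backwards exactly where it matters: on $C_m$ the precollision angle satisfies $\cos\varphi\lesssim m^{-\frac{\beta-1}{\beta}}$ (down to $\sim 1/m$), so the Euclidean expansion of a piece of $C_m$ landing in $\bH_k$ is only $\sim \frac{\tau\cK}{\cos\varphi_1}\gtrsim m^{\frac{1}{\beta-1}}k^2$ (and at most $Cm^{\frac{2}{\beta}}k^2$, by the paper's own Case I bounds), i.e.\ smaller than the $p$-expansion by the factor $\cos\varphi$. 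This same conflation undermines your second family: a cell-boundary piece has Euclidean expansion $\gtrsim m^{\frac{1}{\beta-1}}$, not $m^{1+\frac{1}{\beta-1}}$, and $\sum_{m>m_0}m^{-\frac{1}{\beta-1}}$ diverges for every $\beta>2$; likewise, summing the homogeneity tails uniformly in $|W|$ over the cells gives, with $k_m\sim m^{\frac{\beta-1}{2\beta}}$, a series with exponent at most $\frac{2}{\beta}+\frac{\beta-1}{2\beta}=\frac{\beta+3}{2\beta}<1$ for $\beta>3$, which also diverges. So neither the homogeneity cascade nor the cell-crossing family can be handled uniformly in $|W|$; both require the paper's device of tying the smallest crossed index to $|W|$ and letting $\delta_0\to 0$. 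Your treatment of the window cascade for $m>m_0$ is sound, but repairing the two gaps above essentially forces you back to the paper's proof.
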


\begin{proof}
Let $W\subset M$ be an unstable curve.  Since the upper bound of (\ref{step1}) only achieves when $W$ intersects one of the accumulating sequences.\\

      \noindent{\bf{Case I.}} First we assume $W$ crosses some $\bs_{m+1}$, the boundary of cell $C_m$, for some $m\geq 1$. Then it must cross a sequence in $ F^{-1}\{\partial \bH_k\}$.
              Define $W_k=W\cap F^{-1}\bH_k$.
               Let $x\in W_k$, and $x_1=Fx$, then  the expansion factor
          $$\frac{|FW_k|}{|W_k|}\sim \Lambda_k(x)\sim \frac{1+\tau\cK(x)}{\cos\varphi_1}\geq c\frac{m k^2}{m^{1-\frac{1}{\beta-1}}}\geq c m^{\frac{1}{\beta-1}} k^2$$
    On the other hand, since for $x=(r,\varphi)\in M_m$, by (\ref{cKCm}), the curvature $\cK(r)\leq  C m^{\frac{2}{\beta}-1}$. Thus we also get the upper bound for $\Lambda_k\leq C m^{\frac{2}{\beta} }k^2$.          Since by Lemma \ref{KBbd}, the slope of $FW_k$ is approximately $\cK(x_1)$, thus $|FW_k|\sim k^{-3}/\cK(x_1)\sim k^{-\frac{\beta+1}{\beta-1}}$, where we have used (\ref{coscK}), as $$\cK(x_1)\sim \cos\varphi_1^{\frac{\beta-2}{\beta-1}}\sim k^{-\frac{2\beta-4}{\beta-1}}$$
                  Thus $|W_k|\geq |FW_k|/\Lambda_k\geq m^{-\frac{2}{\beta}} k^{-2-\frac{\beta+1}{\beta-1}}$. This enable us to find $k_m$ -- the  smallest $k$ such that $F^{-1}\bH_k$ intersects $W$:
                      $$|W|=\sum_{k\geq k_m} |W_k|\geq  m^{-\frac{2}{\beta}} k_m^{-\frac{2\beta}{\beta-1}}$$
                           which implies that $$\frac{1}{k_m}\leq\left(m^{\frac{2}{\beta}}|W| \right)^{\frac{\beta-1}{2\beta}}$$

   This implies that
      \begin{align*}  \sum_{k\geq k_m}\frac{1}{\Lambda_k} &\leq  C m^{\frac{1}{1-\beta}} k_m^{-1}\leq C |W|^{\frac{\beta-1}{2\beta}} m^{-\frac{1}{\beta-1}+\frac{\beta-1}{\beta^2}}  \leq C_1 |W|^{\frac{\beta-1}{2\beta}}\end{align*}
   Thus by taking $|W|$ small, we can make the above sum $<1$.\\

\noindent{\bf{Case II.}} Next we consider the case when $W$ only intersects $\{s_{m,k}\}$. By (\ref{hmk}), we can find $k_m''$ -- the smallest $ k$ such that $W$ intersects $s_{m,k}$:
      $$|W|\sim \sum_{k\geq k_m''}{h_{m,k}m^{1-\frac{1}{\beta-1}}\sim m^{-\frac{\beta+1}{\beta-1}}
k_m''^{-\frac{2\beta}{\beta-2}}}$$
 Then
$$k_m''=\left(|W|^{-1} m^{-\frac{\beta+1}{\beta-1}}\right)^{\frac{\beta-2}{2\beta}}=\frac{ 1}{|W|^\frac{\beta-2}{2\beta} m^{\frac{(\beta+1)(\beta-2)}{(\beta-1)2\beta}}} $$

  Note that $W\subset C_m$, which also implies that $|W|<m^{-2}$, i.e. $m<|W|^{-\frac{1}{2}}$.  By Proposition \ref{PrAux1}  the expansion factor satisfies
    \beq\label{lambdamk}\Lambda_k:=|FW_k|/|W_k|\geq c  m^{1+\frac{1}{\beta-1}} k^{\frac{3\beta-2}{\beta-2}}\eeq
Combining the above facts, we have
      \begin{align*}
      \sum_{k\geq k_m''}
 \Lambda_k^{-1} &\leq C\sum_{k\geq k_m''}\frac{1}{m^{1+\frac{1}{\beta-1}} k^{\frac{3\beta-2}{\beta-2}}}\leq C_1 m^{\frac{1}{\beta-1}}|W|<C_1 \sqrt{|W|}
   \end{align*}
   Thus by taking $|W|$ small, we can make the above sum $<1$.\\

\noindent{\bf{Case III.}} Finally we consider the case when $W$ intersects countably many $C_m$. Let $m_{0}$ be the smallest $m$ such that $W\cap C_m$ is empty. Then $|W|\sim m_0^{\frac{1}{1-\beta}}$. For any $m>m_0$, let $W_{m,k}=C_m\cap W\cap F^{-1}\bH_k$, with $k\geq k_m=m^{\frac{\beta-1}{2\beta}}$ and $\Lambda_{m,k}$ be the expansion factor on $W_{m,k}$. Then by (\ref{lambdamk}).
$$\sum_{m\geq m_0}\sum_{k\geq k_m} \Lambda_{m,k}^{-1}\leq \sum_{m\geq m_0}\sum_{k\geq k_m}\frac{1}{m^{1+\frac{1}{\beta-1}} k^{\frac{3\beta-2}{\beta-2}}}\leq C |W|$$
Again  by taking $|W|$ small, we can make the above sum $<1$.\end{proof}

Given  an unstable curve $W$, a point $x\in W$ and an integer $n\geq 0$, we denote by $r_{n}(x)$ the distance between $F^n x$ and the
boundary of the homogeneous component of $F^n W$ containing $F^n x$. Clearly $r_n(x)$ is a function on $W$ that characterize the size of smooth components of $F^n W$.
We first state  the Growth lemma, proved in \cite{CZ09}, which is key in the analysis hyperbolic systems with singularities. It expresses the fact that the expansion of unstable curves dominates the cutting by singular curves, in a uniform fashion for all sequences. The reason behind this fact is that unstable curves expand at a uniform exponential rate, whereas the cuts accumulate at only finite number of singular points. The following Growth Lemma can be derived directly from Lemma \ref{onestep} -- the one-step expansion estimates, see \cite{CM}, \cite{CZ09} for details.
\begin{lemma}\label{lmmstep} (Growth lemma). There exist uniform constants $C_{\bg},c > 0 $ and $\vartheta\in (0,1)$  such that,
for any unstable curves $W$ and $n\geq 1$ :
$$m_W(r_n(x)<\eps)\leq C_{\bg} (\vartheta^nm_W(r_1(x)<\eps)+c|W|)\eps$$
\end{lemma}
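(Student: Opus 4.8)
The plan is to deduce the growth lemma from the one-step expansion estimate (Lemma \ref{onestep}) together with the bounded distortion and bounded curvature of $F$ along unstable curves, following the now-standard machinery of \cite{CM, CZ09}. The two facts I will lean on are: (i) for every short unstable curve $W$ with $|W|<\delta_0$, the smooth homogeneous components $\{W_i\}$ of $W$ under $F$ satisfy $\sum_i |W_i|/|FW_i|<\vartheta_1<1$; and (ii) the Jacobian of $F$ along unstable curves has bounded distortion, so that $J_{W_i}^{-1}\le C_d\,|W_i|/|FW_i|$ pointwise on each component, with $C_d=C_d(Q)$ coming from the distortion estimates of \cite{CM}.

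First I would establish a single-step estimate: for every unstable curve $W$ and every $\eps>0$,
$$m_W(r_1<\eps)\le(\vartheta+c\,|W|)\eps,\qquad \vartheta\in(0,1),\ c>0.$$
A point $x\in W$ has $r_1(x)<\eps$ precisely when $Fx$ lies within $\eps$ of the boundary of its homogeneous component $FW_i$; for each $i$ the $\eps$-neighborhood of $\partial(FW_i)$ inside $FW_i$ has length at most $\min\{2\eps,|FW_i|\}$, so pulling it back with (ii) gives $m_{W_i}(r_1<\eps)\le 2C_d\eps\,|W_i|/|FW_i|$ in both the long-component and the fully-short-component cases. Summing and invoking (i) yields $m_W(r_1<\eps)\le 2C_d\vartheta_1\eps$ for short curves; the distortion factor $C_d$ is absorbed in the usual way by first replacing $F$ with a fixed power $F^{n_0}$ for which $C_d\sum_i|W_i|/|F^{n_0}W_i|<1$. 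The additive $c|W|\eps$ term and the passage to long curves then follow by cutting $W$ into $\cO(|W|/\delta_0)$ pieces of length below $\delta_0$ and applying the short-curve bound to each, since chopping at artificial points only over-counts endpoints.

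Next I would run the induction on $n$. Writing $FW=\bigcup_i FW_i$ and using $F^{n}=F^{n-1}\circ F$, the value $r_n(x)$ for $x\in W_i$ equals the $(n-1)$-step quantity of the single homogeneous curve $FW_i$ evaluated at $Fx$; hence by distortion
$$m_{W_i}(r_n<\eps)\le C_d\,\frac{|W_i|}{|FW_i|}\,m_{FW_i}(r_{n-1}<\eps).$$
Feeding the inductive bound for the curves $FW_i$ into this and summing over $i$, the factor $\sum_i|W_i|/|FW_i|<\vartheta_1<1$ contracts the decaying part by one power of $\vartheta$ at each step (with the single-step estimate used to control the $m_{FW_i}(r_1<\eps)$ contributions), while $\sum_i|W_i|=|W|$ keeps the saturated part proportional to $|W|$ and non-decaying. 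This produces the asserted bound with uniform constants $C_{\bg},c$ and a uniform $\vartheta\in(0,1)$.

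The main obstacle is not the abstract induction, which is routine once (i) and (ii) are in hand, but the uniformity of the one-step sum across the three accumulating families of singular curves in each cell $C_m$: the sequences $\{s_{m,k}\}$ and $\{s'_{m,k}\}$ converging to $w^s_m$, and the preimages $F^{-1}\cS_H$ converging to $\bs_{m+1}$. That uniformity is exactly what Lemma \ref{onestep} already supplies through its Cases I--III, so the one remaining delicate point is purely technical, namely absorbing the distortion constant $C_d$ into a factor strictly below $1$, which is handled by passing to the fixed iterate $F^{n_0}$ and using the bounded-distortion and bounded-curvature estimates of \cite{CM}. With these, the growth lemma follows directly from Lemma \ref{onestep}, as claimed.
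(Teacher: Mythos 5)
The paper itself does not actually prove this lemma: it invokes the standard derivation from the one-step expansion estimate (Lemma \ref{onestep}) and defers the details to \cite{CM} and \cite{CZ09}. Your overall route --- one-step expansion plus distortion control, a single-step measure estimate, then iteration --- is exactly the intended one, and your single-step estimate (including the subdivision of long curves at artificial points, and the remark that a fixed power $F^{n_0}$ can be used to make the one-step coefficient strictly less than one) is sound.

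However, the inductive step as you wrote it would fail. You pull back through one iterate using $m_{W_i}(r_n<\eps)\le C_d\,\frac{|W_i|}{|FW_i|}\,m_{FW_i}(r_{n-1}<\eps)$ and then feed in the inductive bound for the curves $FW_i$. Doing this at every step multiplies the \emph{saturated} term by the distortion constant $C_d>1$ at each iteration: if $c^{(n)}$ denotes the coefficient of $|W|\eps$ after $n$ steps, your scheme gives the saturated contribution $\sum_i C_d\frac{|W_i|}{|FW_i|}\,c^{(n-1)}|FW_i|\eps = C_d\,c^{(n-1)}|W|\eps$, so $c^{(n)}\ge C_d\,c^{(n-1)}$ and hence $c^{(n)}\gtrsim C_d^{\,n}\to\infty$; no choice of constants closes the induction. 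Passing to a fixed power $F^{n_0}$ does not repair this, since the same factor $C_d$ reappears at every $F^{n_0}$-step, and shrinking $\delta_0$ only makes $C_d=1+\gamma$ with $\gamma>0$, which still diverges. The standard proof (Section 5.10 of \cite{CM}, which is what the paper points to) is organized precisely to avoid this: one pulls back with the \emph{minimal expansion factor}, $m_{W_i}(F^{-1}A)\le \Lambda_i^{-1}\,m_{FW_i}(A)$, which carries no distortion constant, and one decomposes points of $W$ according to the last time $k\le n$ at which the image component containing them exceeds length $\delta_0$. For each such $k$ the distortion bound is applied only once, to the full composition $F^k$ (whose distortion constant is uniform in $k$ by the usual geometric-series argument along expanding iterations), converting image lengths into a multiple of $|W|$; summing $\vartheta^{n-k}\,C|W|/\delta_0$ over $k$ then yields the term $c|W|\eps$ with constants independent of $n$. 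This stratification by the last ``long-component'' time, together with the uniform-in-$k$ composition distortion bound, is the missing idea in your proposal.
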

This lemma  implies that for $n$ large enough, such as $n\geq n_W:=|\log  |W|/\log\vartheta |$, one has
\beq\label{growth}
m_W(r_n(x)<\eps)\leq 2C_{\bg} |W|\eps
\eeq
In other words, after a sufficiently long time $n\geq n_W$, the majority
of points in $W$ have their images in homogeneous components of $F^n W$ that are longer than $1/(2C_{\bg})$, and the family of points belonging to shorter ones has a linearly decreasing tail.

In \cite{C99,CZ,CZ09}, the following lemma was proved.
\begin{lemma}\label{lem:1}
If the induced billiard map $F$ satisfies (\ref{step1}). Then there is a horseshoe $\Delta_0\subset M$ such
that \beq\label{Yexp1}
	\mu\bigl(x\in M:\ R(x;F,\Delta_0)>m\bigr)\leq
	\,C\theta^m\quad\quad \forall m\in \mathbb N,
\eeq for some $\theta<1$, where $R(x; F, \Delta_0)$ is the return
time of $x$ to $\Delta_0$ under the map $F$. Thus
 the map $F:M\to M$ enjoys exponential decay of correlations.
\end{lemma}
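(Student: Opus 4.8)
The plan is to realize $(F,M,\mu)$ as a hyperbolic system with singularities in the sense of Young \cite{Y98}, as adapted to billiards in \cite{C99,CZ09}, and then to invoke the abstract tower theorem: once one exhibits a horseshoe $\Delta_0$ carrying a hyperbolic product structure whose return time has exponentially small tails, exponential decay of correlations for the H\"older classes $\cH^{\pm}(\gamma)$ follows automatically. Thus the real content is the construction of $\Delta_0$ together with the bound (\ref{Yexp1}); everything else amounts to checking a fixed list of axioms. First I would assemble the structural inputs already available for $(F,M)$: the strictly invariant cone fields and uniform hyperbolicity of Definition \ref{defn:1} and Lemma \ref{KBbd}; the bounded curvature, distortion, and absolute-continuity estimates for unstable curves, which are standard for such billiards (cf. \cite{CM}); and, crucially, the Growth Lemma \ref{lmmstep} together with its consequence (\ref{growth}). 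These are precisely the hypotheses of the abstract scheme, and all of them have been reduced to the one-step expansion (\ref{step1}) verified in Lemma \ref{onestep}.

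Next comes the horseshoe construction. Fix a small $\delta_1>0$ and a reference (``magnet'') unstable curve $W^u_0$ of length at least $\delta_1$. By (\ref{growth}), for every unstable curve $W$ and every $n\ge n_W$ a definite fraction of $F^nW$ lies in homogeneous smooth components of length at least $1/(2C_{\bg})$; such long components cross $W^u_0$ fully in the unstable direction. Collecting these full crossings, saturating them by the stable foliation, and using absolute continuity of the stable holonomy to control the transverse measure, one obtains a Cantor rectangle $\Delta_0$ of positive $\mu$-measure with the required product structure. The return time $R(\cdot;F,\Delta_0)$ is then the first moment at which the $F$-orbit of a point lands in $\Delta_0$ through a proper, unstable-full crossing.

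The exponential tail (\ref{Yexp1}) is where the Growth Lemma does all the work, and it is the step I expect to be the main obstacle. The mechanism is that in each block of roughly $n_W$ iterations a fixed positive fraction of any long unstable curve falls into a full crossing of $\Delta_0$, so the measure of points that have failed to return is multiplied by a factor strictly less than $1$ per block; iterating yields the geometric bound $C\theta^m$. The delicate point is that this recurrence must survive the cutting of unstable curves by the singularity set, which in the present model is unusually intricate: each cell $C_m$ carries three accumulating families of singular curves, $\{s_{m,k}\}$, $\{s'_{m,k}\}$, and $F^{-1}\cS_H$. That the expansion dominates this cutting uniformly across all cells is exactly the content of (\ref{step1}), and hence of Lemma \ref{lmmstep}, so the analytic difficulty has been isolated there. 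What remains is to run the standard coupling and recurrence argument of \cite{C99,CZ09} to obtain (\ref{Yexp1}), and then to apply Young's theorem to conclude that $F:M\to M$ enjoys exponential decay of correlations.
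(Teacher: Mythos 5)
The paper does not prove this lemma at all: it is stated as a known result, imported verbatim from \cite{C99,CZ,CZ09} (``In \cite{C99,CZ,CZ09}, the following lemma was proved''), and the paper's own contribution to condition (\textbf{F1}) is solely the verification of the hypothesis (\ref{step1}) in Lemma \ref{onestep}. Your proposal therefore does strictly more than the paper: you reconstruct, in outline, the argument contained in the cited references --- the magnet/horseshoe construction driven by the Growth Lemma \ref{lmmstep}, saturation by the stable foliation with absolute continuity controlling the transverse measure, exponential tails for the return time to $\Delta_0$ via a per-block recurrence estimate, and finally Young's tower theorem \cite{Y98} to convert (\ref{Yexp1}) into exponential decay of correlations. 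This is faithful to how those references actually proceed, so there is no conflict of method; the only difference is that the paper outsources the entire proof to a citation, while you sketch it.

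One step in your sketch is asserted rather than argued, and as stated it would fail: from (\ref{growth}) you obtain that, after $n\geq n_W$ iterations, a definite fraction of $F^nW$ lies in homogeneous components of length at least $1/(2C_{\bg})$, but length alone does not imply that such a component ``crosses $W^u_0$ fully in the unstable direction'' --- a long component can simply sit far away from the magnet. In \cite{C99,CZ09} (see also \cite{CM}) this is exactly where an additional crossing argument enters: one uses mixing (or ergodicity) of the underlying billiard map, or chooses a finite family of candidate magnets so that every unstable curve of length $\geq 1/(2C_{\bg})$ properly crosses at least one of them with a uniform fraction of its measure within a bounded number of further iterations. Without this ingredient the uniform coupling factor $\theta<1$ per block, and hence the geometric bound in (\ref{Yexp1}), is not justified. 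This is a standard and known repair rather than a flaw in your strategy, but it is a genuine gap in the outline as written.
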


Hence we conclude that for $\beta\in(2, \infty)$, the return
map $F\colon M\to M$ has exponential mixing rates by above Lemma, thus condition (\textbf{F1}) is verified. Moreover, Lemma \ref{LmMain2} follows with the decay rates given by (\ref{main2}). Thus for both systems $(\cF,\cM)$ and $(\tF,\tM)$, we know the decay rates is of order $\cO(n^{-\eta})$, using Lemma \ref{LmMain2}. Next we will improve the decay rates.

\section{Proof of the main Theorems.}

   A general strategy for estimating the correlation function $\cC_m(f, g,\cF,\mu)$ for
systems with weak hyperbolicity was developed in \cite{CZ, CZ3,CVZ}.

We first prove two results that will be need in the proof of the main Theorems.
\begin{lemma}\label{Dnaa} Let $D_{n}(a)=\{x\in  M\,:\, R( x) \geq n^{1-a}\}$, with $a\in [0,\frac{1}{\beta}]$.
Then for any $n\geq 1$,
$$\mu(D_n(a)|F (R=n))\sim  n^{\frac{\beta a-1}{\beta-1}}$$
In addition the conditional expectation $\mathbb{E}(R(Fx)|x\in M_n)\sim n^{\frac{\beta-1}{\beta}}$.\end{lemma}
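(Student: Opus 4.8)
The plan is to reduce both assertions to a single one-step transition estimate and then sum. Throughout, recall from (\ref{Cm0}) and the discussion after (\ref{Rlevel}) that the level set $(R=n)$ is dominated by $C_{n,0}\subset C_n$, on which $F=\tF$ and $R(x)\sim\tau(x)=n$, and that $\mu(F(R=n))=\mu(R=n)\sim\mu(C_{n,0})\sim n^{-3}$. Since $F$ preserves $\hmu$, I would rewrite $\mu(D_n(a)\mid F(R=n))=\mu\bigl(R=n,\ R(Fx)\ge n^{1-a}\bigr)/\mu(R=n)$, and on $C_{n,0}$ the quantity $R(Fx)$ is comparable to the next free path $m(x):=\tau(\tF x)$. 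Thus everything follows once I control, for $x\in C_n$, the distribution of the landing cell $m(x)$, i.e. the transition law (using $\{\tau\ge t\}=\cup_{m\ge t}M_m$ and $\tF$-invariance of $\mu$)
\[
  \mu\bigl(x\in C_n:\ \tau(\tF x)\ge t\bigr)=\mu\bigl(\tF C_n\cap\{\tau\ge t\}\bigr)\sim n^{-2}\,t^{-\frac{\beta}{\beta-1}},\qquad t\in\bigl[n^{\frac{\beta-1}{\beta}},\,n\bigr],
\]
the admissible range of landing cells being dictated exactly by Lemma \ref{prop2}(6).

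To prove this transition law I would work in the image coordinates $(r_1,\varphi_1)$ on $\tF C_n$ and use the near-tangential geometry of Lemma \ref{prop2}. By the glancing-angle relation (\ref{coscK}), a point $x_1=\tF x\in\tF C_n$ satisfies $\cos\varphi_1\sim\beta|r_1|^{\beta-1}+\tfrac1n$, while by (\ref{eqsn}) the free path is $\tau(x_1)=1/\psi_1$ with $\psi_1:=\tfrac\pi2-\varphi_1-\beta r_1|r_1|^{\beta-2}$; combining these gives $\psi_1\sim|r_1|^{\beta-1}$, hence the landing cell $m\sim|r_1|^{-(\beta-1)}$ and $\tF C_n\cap\{\tau\ge t\}=\tF C_n\cap\{|r_1|\lesssim t^{-1/(\beta-1)}\}$. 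The invariant measure has density $\propto\cos\varphi_1\,dr_1\,d\varphi_1$ with $\cos\varphi_1\sim|r_1|^{\beta-1}$ on the relevant part, and at each fixed $r_1$ the strip $\tF C_n$ has stable ($\varphi_1$) thickness $\sim n^{-2}$, inherited from the $\varphi$-width $\sim n^{-2}$ of $C_n$ (Lemma \ref{prop2}(4)). Integrating,
\[
  \mu\bigl(\tF C_n\cap\{\tau\ge t\}\bigr)\sim\int_{|r_1|\le t^{-1/(\beta-1)}}|r_1|^{\beta-1}\,n^{-2}\,dr_1\sim n^{-2}\bigl(t^{-1/(\beta-1)}\bigr)^{\beta}=n^{-2}t^{-\frac{\beta}{\beta-1}},
\]
which is the claim; as a check, $\sum_m\mu(\tF C_n\cap M_m)\sim n^{-3}\sim\mu(C_n)$, consistent with $\tF$-invariance of $\mu$.

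Assembling the two parts is then routine. For the first, with $a\in[0,\tfrac1\beta]$ the threshold $n^{1-a}$ lies precisely in $[n^{(\beta-1)/\beta},n]$, so dividing the transition law by $\mu(C_n)\sim n^{-3}$ gives
\[
  \mu\bigl(D_n(a)\mid F(R=n)\bigr)\sim \mu\bigl(m\ge n^{1-a}\mid C_n\bigr)\sim n\,(n^{1-a})^{-\frac{\beta}{\beta-1}}=n^{\frac{\beta a-1}{\beta-1}}.
\]
For the conditional expectation I would integrate the tail, $\mathbb{E}\bigl(R(Fx)\mid M_n\bigr)\sim\int_0^\infty\mu(\tau\circ\tF\ge t\mid C_n)\,dt$, where the integrand is $\sim1$ for $t\le n^{(\beta-1)/\beta}$ and $\sim n\,t^{-\beta/(\beta-1)}$ for larger $t$; both pieces are of order $n^{(\beta-1)/\beta}$ (the second because $\beta/(\beta-1)>1$ makes the integral dominated by its lower endpoint $t\sim n^{(\beta-1)/\beta}$), giving $\mathbb{E}(R(Fx)\mid M_n)\sim n^{(\beta-1)/\beta}$.

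The main obstacle is the transition law itself, and specifically the two geometric inputs feeding the displayed integral: the density/landing-cell profile $\cos\varphi_1\sim|r_1|^{\beta-1}$ with $m\sim|r_1|^{-(\beta-1)}$, and the uniform stable thickness $\sim n^{-2}$ of $\tF C_n$. Making these precise requires tracking the explicit near-tangential map on $C_n$, and in particular distinguishing the two sides $r_1\gtrless 0$ of the flat point, for which I would follow the computation behind Proposition \ref{PrAux1} and \cite{CZ2}. A secondary point to verify is that the non-dominant pieces do not spoil the estimates: the parts of $(R=n)$ other than $C_{n,0}$ have measure $o(n^{-3})$ by Lemma \ref{muC}, and targets $Fx$ falling into a window $C_{m,k}$ with $k\ge1$ (where $R(Fx)\sim mk$ rather than $m$) carry only mass $\mu(\cup_{k\ge1}C_{m,k})\sim m^{-3-\frac{1}{\beta-1}}\ll\mu(C_{m,0})$, so they affect neither the conditional probability nor the expectation at leading order.
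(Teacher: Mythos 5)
Your proposal is correct in substance, but it takes a genuinely different route from the paper: the paper's entire proof is two sentences long --- it notes via (\ref{Cm0}) that $(R=n)$ is dominated by $C_{n,0}$ and then delegates everything else to Lemma 12 of \cite{Z12}, which it explicitly declines to reproduce. You perform the same reduction to $C_{n,0}$ but then reconstruct the needed estimate internally: you reduce both assertions to the one-step transition law $\mu(x\in C_n:\tau(\tF x)\ge t)\sim n^{-2}t^{-\beta/(\beta-1)}$ and derive it from the geometric facts already stated in this paper (Lemma \ref{prop2}, the glancing relations (\ref{coscK1})--(\ref{coscK}), and the cell thickness $\sim n^{-2}$, which is cleanest to justify by the time-reversibility argument the paper itself uses in Lemma \ref{muC} rather than by saying the width is ``inherited''). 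The exponent bookkeeping is exactly right: $n\cdot(n^{1-a})^{-\beta/(\beta-1)}=n^{(\beta a-1)/(\beta-1)}$ recovers the first claim, the tail integral gives $n^{(\beta-1)/\beta}$ for the expectation, and your consistency check $\sum_m\mu(\tF C_n\cap M_m)\sim n^{-3}\sim\mu(C_n)$ is a real sanity check. What the paper's citation buys is brevity; what your derivation buys is a self-contained argument, at the same order-of-magnitude level of rigor as the rest of the paper, that the reader can verify without consulting \cite{Z12}.

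One regime does need to be closed rather than waved through. Your transition law is derived, via $m\sim|r_1|^{-(\beta-1)}$, only for $t\in[n^{(\beta-1)/\beta},n]$, i.e.\ on the side of the flat point where the signs in $\psi_1=\tfrac\pi2-\varphi_1-\beta r_1|r_1|^{\beta-2}$ add constructively; that suffices for the first assertion, since $n^{1-a}\le n$. But the upper bound on $\mathbb{E}(R(Fx)\mid M_n)$ also requires the tail for $t\in(n,\,n^{\beta/(\beta-1)}]$, the landing cells reached from the near-tangential part $C_n'$, which sit on the opposite side $r_1$ of the flat point where $\psi_1\approx 1/n-2\beta|r_1|^{\beta-1}$ and the relation $m\sim|r_1|^{-(\beta-1)}$ fails. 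Your phrase ``$\sim n\,t^{-\beta/(\beta-1)}$ for larger $t$'' silently extrapolates the formula there; in fact the same strip-intersection computation gives $\mu(\tF C_n\cap M_m)\sim m^{-2}n^{-2-\frac{1}{\beta-1}}$ for $m>n$, hence a conditional tail $\sim t^{-1}n^{\frac{\beta-2}{\beta-1}}$, which is heavier than your extrapolation. Its contribution to the expectation is $O\bigl(n^{\frac{\beta-2}{\beta-1}}\ln n\bigr)=o\bigl(n^{\frac{\beta-1}{\beta}}\bigr)$, so the stated asymptotics survive, but this side of the flat point must be estimated explicitly (as you yourself anticipate when flagging the $r_1\gtrless0$ distinction) for the expectation bound to be complete.
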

It follows from (\ref{Cm0}) that $(R=n)$ is essentially dominated by $C_{n,0}=(R=n)\cap M_n$. Thus the proof of the above lemma directly follows from Lemma 12 in \cite{Z12}, which we will not repeat here.

\begin{proposition}\label{barMm}
There exist $e=\frac{1}{2\beta}$, such that for any large $m$, there exists
$E_m\subset (R=m)$, with $\mu((R=m)\setminus E_m)\leq
m^{-e}\mu(R=m)$, and any $x\in E_m$, $Fx, F^2x, ..., F^{b\ln
m} x$ all belong to cells with index less than $m^{1-e}$.
\end{proposition}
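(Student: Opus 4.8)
The plan is to control the measure of the complementary ``bad'' set
$$
(R=m)\setminus E_m=\bigcup_{j=1}^{b\ln m}\bigl\{x\in(R=m)\,:\,\ell(F^jx)\geq m^{1-e}\bigr\},
$$
where for $y\in M$ I write $\ell(y)$ for the index of the cell $C_{\ell(y)}\ni y$, i.e. $\tau(y)=\ell(y)$. By (\ref{Cm0}) the level set $(R=m)$ is, up to a set of negligible relative measure, the component $C_{m,0}\subset C_m$, so every $x\in(R=m)$ has $\ell(x)\sim m$; moreover $\ell(y)\leq CR(y)$ for all $y$, since a point in $C_{\ell(y)}$ has free path $\tau\sim\ell(y)$ and hence needs at least of order $\ell(y)$ iterations of $\cF$ to return to $M$. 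Writing $P_m(\,\cdot\,)=\hmu(\,\cdot\mid R=m)$ for the normalized conditional measure, the assertion $\mu((R=m)\setminus E_m)\leq m^{-e}\mu(R=m)$ is equivalent to $\sum_{j=1}^{b\ln m}P_m(\ell(F^jx)\geq m^{1-e})\leq m^{-e}$ for all large $m$.

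First I would treat the single step $j=1$, which is the dominant term and which dictates the value $e=\tfrac1{2\beta}$. Since $\ell(Fx)\leq CR(Fx)$, Lemma~\ref{Dnaa} applied with $n=m$ and $a=e$ gives
$$
P_m\bigl(\ell(Fx)\geq m^{1-e}\bigr)\leq P_m\bigl(R(Fx)\geq c\,m^{1-e}\bigr)\leq C\,\mu\bigl(D_m(e)\mid F(R=m)\bigr)\sim m^{-\frac1{2(\beta-1)}}.
$$
The exponent $-\tfrac1{2(\beta-1)}$ is strictly smaller than $-e=-\tfrac1{2\beta}$, so $P_m(\ell(Fx)\geq m^{1-e})\leq\tfrac12 m^{-e}$ for $m$ large. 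The value $e=\tfrac1{2\beta}$ is exactly the threshold of a Chebyshev balance: Lemma~\ref{Dnaa} also yields $\mathbb{E}(\ell(Fx)\mid\ell(x)=n)\lesssim\mathbb{E}(R(Fx)\mid x\in M_n)\sim n^{(\beta-1)/\beta}$, so the probability of crossing the level $m^{1-e}$ in one step is at most $m^{(\beta-1)/\beta}/m^{1-e}=m^{e-1/\beta}$, which is $\leq m^{-e}$ precisely when $2e\leq1/\beta$.

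Next I would iterate this estimate along the trajectory. Using the tower property together with the one-step bound $\mathbb{E}(\ell(Fy)\mid\ell(y)=n)\lesssim n^{(\beta-1)/\beta}$ and the concavity of $t\mapsto t^{(\beta-1)/\beta}$ (Jensen), the expected index contracts as
$$
\mathbb{E}_{P_m}\bigl[\ell(F^jx)\bigr]\ \lesssim\ \bigl(\mathbb{E}_{P_m}[\ell(F^{j-1}x)]\bigr)^{(\beta-1)/\beta}\ \lesssim\ m^{\left(\frac{\beta-1}{\beta}\right)^{j}},
$$
since $\mathbb{E}_{P_m}[\ell(x)]\sim m$. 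Chebyshev then gives $P_m(\ell(F^jx)\geq m^{1-e})\lesssim m^{E_j-1+e}$ with $E_j=\bigl(\tfrac{\beta-1}{\beta}\bigr)^{j}$. For $j=1$ this reproduces $m^{-e}$; since $E_j-1+e$ is decreasing in $j$ and drops by $E_1/\beta=(\beta-1)/\beta^2\geq e$ from $j=1$ to $j=2$ (using $\beta>2$), every term with $j\geq2$ satisfies $E_j-1+e\leq-2e$ and is therefore at most $m^{-2e}$. Summing, $\sum_{j\geq2}P_m(\ell(F^jx)\geq m^{1-e})\leq b\ln m\cdot m^{-2e}=o(m^{-e})$, and combined with the first step this gives $\sum_{j=1}^{b\ln m}P_m(\ell(F^jx)\geq m^{1-e})\leq m^{-e}$ for all large $m$, as required.

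The main obstacle is making the iteration rigorous: the transition estimates from Lemma~\ref{Dnaa} are statements about the reference measure $\mu|_{C_n}$, whereas at step $j$ the relevant object is the pushforward $(F^{j-1})_*P_m$, which a priori could concentrate on atypical parts of the cells and violate the one-step bound. To control this I would invoke the Growth Lemma~\ref{lmmstep} and bounded distortion: the dominant component $C_{m,0}$ of $(R=m)$ is foliated by unstable curves that stretch fully across the cell, so the transition estimate is really a statement along individual unstable curves and holds curve-by-curve with uniform constants by distortion, while (\ref{growth}) guarantees that the iterates $F^{j-1}(C_{m,0})$ remain unions of long homogeneous unstable curves of controlled density. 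Hence the conditional transition statistics match those of Lemma~\ref{Dnaa} up to a fixed constant; that uniform constant, together with the logarithmic factor $b\ln m$, is absorbed by the strict gain $\tfrac1{2(\beta-1)}>\tfrac1{2\beta}$ in the first step and by the excess decay $m^{-2e}$ at all later steps.
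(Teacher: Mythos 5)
Your opening step ($j=1$) is exactly the paper's: both apply Lemma~\ref{Dnaa} with $a=\tfrac{1}{2\beta}$ to get the bound $m^{-\frac{1}{2(\beta-1)}}$ (this is the paper's (\ref{Rmnm})), and both use (\ref{Cm0}) to reduce to the dominant component $C_{m,0}$. The divergence is in the steps $j\geq 2$, and that is where your argument has a genuine gap. Your Jensen/Chebyshev cascade $\mathbb{E}_{P_m}[\ell(F^jx)]\lesssim\bigl(\mathbb{E}_{P_m}[\ell(F^{j-1}x)]\bigr)^{(\beta-1)/\beta}$ requires the one-step expectation bound of Lemma~\ref{Dnaa} to hold for the conditional measures induced by the pushforward $(F^{j-1})_*P_m$, and your justification --- that (\ref{growth}) ``guarantees that the iterates $F^{j-1}(C_{m,0})$ remain unions of long homogeneous unstable curves of controlled density,'' hence the transition statistics match up to a constant --- is not what the Growth Lemma says. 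The singularity curves of $F$ accumulate inside every cell, so $F^{j-1}(C_{m,0})$ always contains short components trapped arbitrarily close to a singularity, and such a component can be mapped by $F$ entirely into a single cell of index as large as $n^{\beta/(\beta-1)}$ when it lies in a cell of index $n$ (Lemma~\ref{prop2}(6)). The Growth Lemma controls only the \emph{measure} of points on short components, not their existence or their images; and because the cell index is unbounded with heavy tails (infinite variance), a set of measure $\theta^{j}$ carrying index values of size $m^{(\beta/(\beta-1))^{j}}$ can dominate $\mathbb{E}_{P_m}[\ell(F^jx)]$ outright. So a bound on the measure of exceptional points --- which is all you invoke --- does not validate an iteration of expectations of an unbounded observable; every step $j\geq 2$ of your cascade is unproven.

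The paper never takes expectations, and this is precisely the point your proposal misses: the bad event itself forces proximity to the singularity set at a definite scale, so probabilities can be bounded directly. If $F^l x$ lies in a cell of index $n\geq m^{1-\frac{1}{2\beta}}$, then, since such cells have unstable diameter $O\bigl(m^{-\frac{2\beta-1}{\beta}}\bigr)$, the point $F^l x$ lies within $\delta=m^{-\frac{2\beta-1}{\beta}}$ of the boundary of its homogeneous component; hence $F^l_*\nu_m(D_m)$ is controlled by the Growth Lemma~\ref{lmmstep} applied at the single scale $\delta$, yielding the paper's (\ref{growthest}), namely $F^l_*\nu_m(D_m)\leq c\theta^l F_*\nu_m(D_m)+C_z m^{-\frac{2\beta-1}{\beta}}$, whose first factor is exactly your step-one estimate. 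Summing over $l\leq b\ln m$ gives $Cm^{-\frac{1}{2\beta}}$ with no conditional expectations, no Jensen, and no uniformity of transition kernels. In short: your $j=1$ estimate plus this scale-coupled use of the Growth Lemma \emph{is} the paper's proof; repairing your expectation iteration would require running that same scale-coupled argument at every index threshold, i.e., it reduces to (and is more work than) the argument you were trying to avoid.
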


\begin{proof}
It follows from  Lemma \ref{Dnaa},
\beq\label{Rmnm}\sum_{n=m^{1-\frac{1}{2\beta}}}^{\infty}
\mu\left((R=n)|F(R=m)\right)=\cO(m^{-\frac{1}{\beta-1}})\eeq

 Since $C_{m,0}$ dominates $C_m$ and $(R=m)$, we only need to deal with $C_{m,0}$ and its iterations.
  Note that $C_{m,0}$ has dimension $\asymp m^{-2}$ in the unstable direction, dimension $\asymp m^{-\beta}$ in the stable direction, and measure $\mu(C_{m,0})\asymp m^{-3}$.  We first foliate $C_{m,0}$ with unstable curves $W_{{\alpha}}\subset C_{m,0}$ (where $\alpha$ runs through an index set $\cA$). These curves have length $|W_{\alpha}|\asymp m^{-2}$. Let $\nu_m:=\frac{1}{\mu(C_{m,0})}\mu|_{C_{m,0}}$ be the conditional measure of $\mu$ restricted on $C_{m,0}$. Let $\cW=\cup_{\alpha\in \cA} W_{\alpha}$ be the collection of all unstable curves, which foliate the cell $C_{m,0}$. Then we can disintegrate the measure $\nu$ along the leaves $W_{\alpha}$. More precisely,
for any measurable set $A\subset C_{m,0}$,
$$\nu_m(A)=\int_{\cA}\nu_{\alpha}(W_{\alpha}\cap A) \, d\lambda(\alpha)$$
where  $\lambda$ is the probability factor measure on $\cA$.   For each unstable curve $W_{\alpha}\in\cW$, if $F^l W_{\alpha}$  crosses $D_m:=D_m((2\beta)^{-1})$, then $F^l W_{\alpha}$ is cut into pieces by the boundary of cells in $D_m$. Next we use  Lemma \ref{lmmstep}, notice that if $F^l(x)\in C_{n,0}$, for $l=1, \cdots, b\ln m$, $n>m^{1-\frac{1}{2\beta}}$,
then the length of the largest  unstable manifold  is
$\sim m^{-2(1-\frac{1}{2\beta})}=m^{-\frac{2\beta-1}{\beta}}$. So
applying (\ref{growth}) with $\delta=m^{-\frac{2\beta-1}{\beta}}$.   According to the growth lemma \ref{lmmstep}, there exists $\theta\in(0,1)$, such that we have
\beq\label{growthest}F^l_*\nu_m(D_m)\leq c\theta^l  F_*\nu_m(D_m)+C_z m^{-\frac{2\beta-1}{\beta}}.\eeq
  By (\ref{Rmnm}), we know that
  $$F_*\nu_m(D_m)\leq C m^{-\frac{1}{2(\beta-1)}}$$for $\eps=\frac{1}{2\beta}$. Thus by (\ref{growthest}), we get
  $$ F^l_*\nu_m(D_m)\leq c\theta^l  m^{-\frac{1}{2(\beta-1)}}+C_z m^{-\frac{2\beta-1}{\beta}}$$
  Now we sum for $l=1,\cdots, b\ln m$ to get
  $$ \sum_{l=1}^{b\ln m} F^l_*\nu_m(D_m)\leq c\theta^l  m^{-\frac{1}{2(\beta-1)}}+C_z m^{-\frac{2\beta-1}{\beta}}\leq C m^{-\frac{1}{2\beta}}$$ for $b$ large. This also implies that  for any large $m$, there exists
$E_m\subset (R=m)$, with $$\mu((R=m)\setminus E_m)\leq
m^{-\frac{1}{2\beta}}\mu(R=m)$$ and any $x\in E_m$, $Fx, F^2x, ..., F^{b\ln
m} x$ all belong to cells with index less than $m^{1-\frac{1}{2\beta}}$.

\end{proof}

Now we are ready to prove Theorem 1.

		The tower in
$M$ can be easily and naturally extended to $\cM$, thus we get a
the Young's tower with the same base $\Delta_0 \subset M$; and
a.e.\ point $x\in \cM$ again properly returns to $\Delta_0$ under
$\cF$ infinitely many times. Consider the return times to $M$
under $\cF$ for $x\in \cM$. According to Lemma \ref{muC},
   \beq
	\mu(x\in \cM:\ R(x)>n)\sim
	\, \frac{1}{n}\quad\quad \forall n\geq 1
	   \label{Mpol1}
\eeq

 For every $m\geq 1$ and $x\in\cM$ denote
$$
  r(x;m,M)=\#\{1\leq i\leq m:\ \cF^i(x)\in M\}
$$
Let\begin{align*}
	A_m &=\{x\in \cM\colon\ R(x;\cF,\Delta_0)>m\},\\
	B_{m,b} &=\{x\in \cM\colon\ r(x;m,M) > b\ln m\},
\end{align*}
where $b>0$ is a constant to be chosen shortly.

By (\ref{Yexp1}), we know that
	$$
	\mu(A_m \cap B_{m,b})\leq\,C\cdot m\,\theta^{b\ln m}.
$$
Choosing $b=-2\beta/\ln\theta$, then \beq\label{bt}{\rm const}\cdot
m\,\theta^{b\ln m}\leq {\rm const}\cdot m\,
\theta^{-\frac{2}{\ln\theta}\ln m}={\rm const}\cdot m^{-\beta}.\eeq

The set $A_m \setminus B_{m,b}$ consists of points $x\in \cM$
whose images under $m$ iterations of the map $\cF$ return to $M$
at most $b \ln m$ times but never return to the `base' $\Delta_0$
of Young's tower. Our goal is to show that $\mu(A_m \setminus
B_{m,b}) = \cO(m^{-1})$.

Let $I=[n_0, n_1]$ be the longest interval, within $[1, m]$,
between successive returns to $M$. Without loss of generality, we
assume that $m-n_1 \geq n_0$, i.e.\ the leftover interval to the
right of $I$ is at least as long as the one to the left of it
(because the time reversibility of the billiard dynamics allows us
to turn the time backwards).
	  Due to Lemma \ref{barMm}, for large measure of typical points $y \in M_{|I|}$ we have $F^t (y) \in M_{m_t}$
where $m_t$ decreases exponentially fast. So there exists $c>0$,
such that $$ m/2\leq |I|+b|I|^{1-e}\ln |I|\leq c|I|$$ which gives
$|I|\geq \tfrac{m}{2c}$.

Let $G_m=\{x\in A_m\setminus B_{m,b}\,:\, |I|\geq
\tfrac{m}{2c}\}$. Thus it is enough to estimate the size of $G_m$.
Since for any $x\in G_m$, one of its forward images belongs to
$m_{|I|}$ with $ |I|\geq \tfrac{m}{2c}$. Applying the bound
(\ref{muMm}) to the interval $I$ gives
\beq\label{ABslack}
	 \mu(G_m)
	 \leq C\,m\cdot\, m\,\cdot\, m^{-3}=\,C m^{-1}
\eeq
 (the extra factors of $m$ must be included because the
interval $I$ may appear anywhere within the longer interval
$[1,m]$, and the measure $\mu$ is invariant).

 In terms of Young's tower $\Delta$, we
obtain \beq
	\mu(x\in\Delta:\ R(x;\cF,\Delta_0)>m)\leq
	\,C  m^{-1},\quad\quad \forall m\geq 1
	   \label{Ypol3}
\eeq  This completes the proof of the theorem \ref{TmMain}.

\medskip
\medskip

Next we will consider the system $(\tF,\tM)$. Note that by (\ref{leveltR}), we know that measure of level set of the return time function for $\tF$ satisfies:
\beq\label{leveltR1}\mu(\tilde R\geq n)=\sum_{m\geq 1}\sum_{k\geq n} \mu(C_{m,k})\sim n^{-2-\frac{4}{\beta-2}}\eeq
 Since Section 8 have verified (\textbf{F1}) for the system $(\tF,\tM)$, it is enough to verify Proposition \ref{barMm} for  $(\tF,\tM)$ to get the improved bound. Since the estimation is very similar, although with different value of $e$,  we will not repeat here.

\medskip
\medskip

For Theorem 3. It directly follows from results in \cite{CVZ}, once we verify conditions (\textbf{F1})-(\textbf{F2}) and Proposition \ref{barMm}, which is equivalent to condition \textbf{(H2)(b)} in \cite{CVZ}.

\section{Proof of Proposition~\ref{PrAux1}}
\label{secPP} The proof is similar to the proof of Proposition 1
in \cite{CZ2}.  For completeness, we provide a detailed proof using similar notations.

 Assume $W\subset C_{m,k}$ is an unstable curve.  Then the expansion factor (in the Euclidian metric) along $x\in W$ satisfies   \beq\label{verticalLa}
  \Lambda(x) = \prod_{j=0}^{k} \bigl(1 + \tau(x_j)
  \cB(x_j)\bigr)\frac{\cos\varphi_j}{\cos\varphi_{j+1}}\geq \frac{C\tau(x_k)\cK(x_k)}{\cos\varphi_{k}} \prod_{j=1}^{k=1} \bigl(1 + \tau(x_j)
  \cB(x_j)\bigr)\eeq
   where $x_j =(r_j, \varphi_j)= \cF^{jm} (x)$, for $j=0,\cdots,k-1$, with $x_0=x$.
 Note that $\tau(x_j)\sim m,\,\cos\varphi_j\sim m^{-1}$ for $j=1,\cdots,k$.  Denote $\Lambda_{m,k}(x)=\prod_{j=1}^{k-1} \bigl(1 + \tau(x_j)
  \cB(x_j)\bigr)$ as the   expansion factor  in the $p$-metric along all remaining collisions in the series.  Thus
  \beq \label{Lambdan21}
  \Lambda(x) \geq Cm^2 \cK_m\Lambda_{m,k}(x)\geq C m^2\cdot m^{-\frac{\beta}{\beta-1}}\Lambda_{m,k}(x)\geq C m^{1+\frac{1}{\beta-1}} \Lambda_{m,k}(x)\eeq
  Next we will estimate the lower bound for $\Lambda_{m,k}(x)$.

  Note that  $\cB (x_j)$ satisfies the recurrent formula \beq
\label{cBXm}
   \cB(x_{j}) = \frac{2\cK(r_{j})}{\cos\varphi_{j}}
   +\frac{1}{\tau (x_{j-1}) + 1/\cB(x_{j-1})},
\eeq Since $\tau (x_{j-1})$ is the
distance between $x_{j-1}$ and $x_j$ in the unfolding table
$\tilde{Q}$. Clearly, $\tau(x_j)\sim m$, for any $j=1, ...,
k-1$ and  it is easy to compute  \beq \label{cKrm}
	\cK(r_j) \sim \beta(\beta-1)|r_j|^{\beta-2}. \eeq
Now we see that in order to estimate the expansion factor $\Lambda(x)$ given by (\ref{Lambdan21}), it is enough to estimate the $r$-coordinates $\{r_j\}$ along the trajectory of  $x\in C_{m,k}$.


 Let    $x$ be a  point in $C_{m,k}$ (the case $x \in
C'_{m,k}$ is easier and will be treated later). Denote $s_x$ as the distance from $x$ to $\gamma_{y_m}$ in $Q$.
Define a variation flow $\{\psi(s,t): s\in [0, s_x], t\in
(-\infty, \infty)\}$ of $\gamma_{y_m}$ such that $\psi(0,t)$ has
the same trace as $\gamma_{y_m}$ and $\psi(s_x,t)$ has the same
trace as $\gamma_x$. Assume the variation vector field
$J(t)=\frac{\partial \psi}{\partial s}(s_x,t)$ is perpendicular to
the trajectory of $x$, then
$J(t)$ is also called a (generalized) Jacobi field along
$\gamma_{x}$.  Denote by $J_j$ the corresponding
Jacobi vector based at $x_i$. Notice $\dot J=\frac{d J}{dt}$
is also a vector field along $\gamma_{x}$. Correspondingly, we
denote by $v_j=-\dot J_j$, which also represents the angle made by
the orbit of $\cF^{jm}(y_m)$ and that of $x_j=\cF^{jm}(x)$.

 Note that $(\beta r_j^{\beta-1}, 1)$ is
the inward normal vector to $\pQ$ at the point $r_j$. Geometric
considerations (more precisely, the generalized Jacobi equations)
yield the following relations: \beq \label{wxm}
  \begin{split}
  v_{j}-v_{j+1} &= 2\arctan (\beta r^{\beta-1}_{j+1})\\
  J_{j}-J_{j+1} &=m  \tan v_j+2(r_j^{\beta}+r_{j+1}^{\beta})\tan
  v_j.
  \end{split}
\eeq  Also notice
$$r_{j+1}=\frac{J_{j+1}}{\cos\varphi_{j+1}}.$$ Since $C_{m,k}'$ contains points
whose images will be trapped in the window  during the next
$k-1$ iterations of $\cF^m$.    Thus using Taylor expansion we obtain \beq\label{wxm1}
  \begin{split}
	v_{j}-v_{j+1}& = 2\beta r_{j+1}^{\beta-1} - R_{v, j+1}\\
	r_{j}-r_{j+1}& = m^2 v_j + R_{r, j}\end{split}
\eeq where  \beq \label{R1}
  R_{v, j+1}\sim r_{j+1}^{3(\beta-1)}
   > 0
\eeq and \beq \label{R2}
   R_{r, j}\sim mr_j^{\beta}v_j+m v_j^3
   > 0
\eeq (the positivity of $R_{v, j+1}$ and $R_{r, j}$ is
guaranteed by the smallness of $\varepsilon$  and by geometry,
$v_j\ll r_j^{\beta-1}$).

Note that formula (\ref{wxm1})
 differs from  formula (4.8) in \cite{CZ2} only by a constant $m^2/2$. As a result, the proofs in the Appendix of that paper can be directly applied to our model without much changes. In fact we can
get an idea to understand the above lemma from the following
estimations.  For $j$ large enough, the solutions of the above systems can
be approximated by the solutions of the following differential
equation:\beq\label{ddotJ}dv/dt=2\beta r^{\beta-1}(t)\,\,\,\,\,\text{ and }\,\,\,\,\,\,dr/dt=m^2 v(t).\eeq

 Let $k'$ be uniquely defined by
$r_{k'+1}\geq r_{k'}$. First we consider the interval $1\leq j\leq
k'$, i.e.\ where $\{r_j\}$ is decreasing. Note that both $\{r_j\}$
and $\{v_j\}$ are decreasing sequences of positive numbers for
$j=1,\ldots,k'$. Using the equation (\ref{ddotJ}), one can show that for any $j=1,\ldots,k'$ the following relation is  always true: \beq \label{xwbeta}m^2v_j^2 \sim 4 r_j^{\beta}\eeq
and
$$r_j^{\frac{\beta}{2}-1}\sim\left((\beta-2)jm+r_0^{1-\beta/2}\right)^{-1}\sim \left((\beta-2)jm+m^{\frac{\beta-2}{2(\beta-1)}}\right)^{-1},$$
where $|r_0|> m^{-\frac{1}{\beta-1}}$ is the initial position for this sequence.

\begin{lemma}
Let $k''\in [1,k']$ be uniquely defined by the condition \beq
\label{k''}
	 v_{k''-1}>2v_{k'}>v_{k''}.
\eeq Then for all $k'< j \leq k''$ we have  \beq \label{xwbeta3}
  r_j\sim m^2(k'-j)v_{k'}
\eeq
And \beq\label{k''k'}k'-k''\sim \frac{1}{m^{\frac{2(\beta-1)}{\beta}}v_{k'}^{1-\frac{2}{\beta}}}\sim \frac{1}{m^{\frac{2}{\beta}} r_{k'}^{1-\frac{2}{\beta}}}\eeq
\end{lemma}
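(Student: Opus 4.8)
The plan is to use the one structural fact that distinguishes the index range of the lemma from the bulk of the descent: there the angles $\{v_j\}$ are essentially constant. Indeed, for $k''\le j\le k'$ the monotonicity of $\{v_j\}$ noted before the statement, together with the defining inequality $v_{k''-1}>2v_{k'}>v_{k''}$, gives $v_{k'}\le v_j\le v_{k''}<2v_{k'}$, so that $v_j\sim v_{k'}$ throughout. This replaces the power-law balance $m^2v_j^2\sim 4r_j^\beta$ of the earlier regime: near the turning point $k'$ the motion in $r$ becomes (discretely) linear rather than power-law, which is exactly the content of (\ref{xwbeta3}).

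To prove (\ref{xwbeta3}) I would sum the second relation in (\ref{wxm1}), writing $r_j-r_{k'}=\sum_{i=j}^{k'-1}(m^2v_i+R_{r,i})$. Using $v_i\sim v_{k'}$ the leading part is $\sum_{i=j}^{k'-1}m^2v_i\sim m^2(k'-j)v_{k'}$, and it remains only to absorb the accumulated remainder. By (\ref{R2}) one has $R_{r,i}\sim m r_i^\beta v_i+m v_i^3$, so dividing by the per-step leading term $m^2v_i$ produces the ratios $r_i^\beta/m$ and $v_i^2/m$, both $o(1)$ since $r_i$ and $v_i$ are small near the turning point. Hence $r_j-r_{k'}\sim m^2(k'-j)v_{k'}$; the $j=k'-1$ instance already shows $r_{k'}$ is of order at most $m^2v_{k'}$, so for $j\le k'-1$ the minimum $r_{k'}$ is absorbed into the leading term and $r_j\sim m^2(k'-j)v_{k'}$, as claimed.

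For the first expression in (\ref{k''k'}) I would fix $k'-k''$ by matching the two regimes at the crossover index $k''$. On one side (\ref{xwbeta3}) gives $r_{k''}\sim m^2(k'-k'')v_{k'}$. On the other side $k''$ sits at the edge of the power-law regime, where (\ref{xwbeta}) holds with $v_{k''}\sim 2v_{k'}$, so $r_{k''}\sim (m^2v_{k'}^2)^{1/\beta}=m^{2/\beta}v_{k'}^{2/\beta}$. Equating the two, $m^2(k'-k'')v_{k'}\sim m^{2/\beta}v_{k'}^{2/\beta}$, gives $k'-k''\sim m^{2/\beta-2}v_{k'}^{2/\beta-1}=\bigl(m^{2(\beta-1)/\beta}v_{k'}^{1-2/\beta}\bigr)^{-1}$. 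Equivalently, one may sum the first relation in (\ref{wxm1}) over $k''\le j<k'$, substitute $r_{j+1}\sim m^2(k'-j-1)v_{k'}$ from (\ref{xwbeta3}), approximate $\sum (k'-j-1)^{\beta-1}$ by $(k'-k'')^\beta/\beta$, and set the total $\sim v_{k'}$; this reproduces the same exponent, the remainder $R_{v,\cdot}$ in (\ref{R1}) being of lower order by the factor $r^{2(\beta-1)}$.

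Finally, to reach the second expression in (\ref{k''k'}) I would invoke the turning-point relation $r_{k'}\sim m^2v_{k'}$, which is the $j=k'-1$ case of (\ref{xwbeta3}) together with $r_{k'}\sim r_{k'-1}$. Substituting $v_{k'}\sim r_{k'}/m^2$ converts $m^{2(\beta-1)/\beta}v_{k'}^{1-2/\beta}$ into $m^{2/\beta}r_{k'}^{1-2/\beta}$, giving the stated equivalence. The main obstacle I anticipate is precisely the bookkeeping at the turning point: justifying $r_{k'}\sim m^2v_{k'}$ means ruling out a degenerate cancellation in $r_{k'-1}-r_{k'}=m^2v_{k'-1}+R_{r,k'-1}$, where $v_{k'-1}\sim v_{k'}$ and $r_{k'-1}\sim m^2v_{k'}$ are of the same order, and the regime-matching at $k''$ must be checked to confirm $k'-k''$ is large enough for the discrete sum to track the corresponding integral. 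Since (\ref{wxm1}) differs from formula (4.8) of \cite{CZ2} only by the overall constant $m^2/2$, all of these estimates can be carried out exactly as in the Appendix of \cite{CZ2}.
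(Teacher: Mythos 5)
Your proof is essentially the paper's own argument. The paper also obtains (\ref{xwbeta3}) from the two-sided increment bound $m^2v_{k'}\leq r_j-r_{j+1}\leq 4m^2v_{k'}$ (your observation $v_j\sim v_{k'}$ on the range between $k''$ and $k'$, plus control of $R_{r,j}$) followed by telescoping; it obtains the first expression in (\ref{k''k'}) by summing the $v$-recursion with $r_{j+1}\sim m^2(k'-j-1)v_{k'}$ inserted, which is exactly your second, ``equivalent'' route; and it obtains the second expression by the same substitution $v_{k'}\sim r_{k'}/m^2$. Your primary route to (\ref{k''k'}) --- matching (\ref{xwbeta3}) against the power-law balance (\ref{xwbeta}) at $j=k''$ --- is only a mild reorganization, since the paper asserts (\ref{xwbeta}) for all $j\leq k'$ and records the matching identity $r_{k''}^\beta\sim m^2v_{k'}^2$ as (\ref{wxn''}) immediately after the lemma; the two derivations carry the same content in opposite order. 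One correction of logic: your remark that the $j=k'-1$ instance ``already shows $r_{k'}$ is of order at most $m^2v_{k'}$'' is a non sequitur, because telescoping controls only the differences $r_j-r_{k'}$, and $r_{k'-1}-r_{k'}\sim m^2v_{k'}$ gives no upper bound on $r_{k'}$ itself. You do, however, re-identify this turning-point bound as the main obstacle in your closing paragraph and defer it to the appendix of \cite{CZ2}, and here you are at parity with the paper: its own display (\ref{kk''l}) silently presupposes $r_{k'}\leq 4m^2v_{k'}$, which cannot follow from the increment bounds alone but only from the definition of $k'$ as the turning index, and the paper likewise leans on \cite{CZ2} for the detailed bookkeeping. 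Keep in mind that the second equivalence in (\ref{k''k'}) is literally equivalent to the relation $r_{k'}\sim m^2v_{k'}$, so this turning-point estimate is not a side issue but the exact content of that part of the lemma --- an unproven step shared equally by your write-up and the paper's proof.
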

\proof Due to (\ref{wxm1}),
for any $j\in
[k'',k')$ we have
$$m^2 v_{k'}\leq r_j-r_{j+1}\leq 2m^2 v_j\leq 2m^2 v_{k''}\leq 4m^2 v_{k'}$$
This implies
\beq\label{kk''l}m^2(k'-j)v_{k'}\leq r_j\leq 4m^2(k'-j+1)v_{k'}.\eeq
 Now we get (\ref{xwbeta3}) and $r_{k'}\sim m^2 v_{k'}$.
(\ref{wxm1}) implies that
$$ m^{2(\beta-1)} (k'-j)^{\beta-1} v_{k'}^{\beta-1}\leq  v_{j}-v_{j+1}\leq 3\beta r_{j}^{\beta-1}\leq 3\beta 4^{\beta-1} m^{2(\beta-1)} (k'-j)^{\beta-1} v_{k'}^{\beta-1}$$
therefore
$$
	   v_j \sim v_{k'}+m^{2(\beta-1)} (k'-j)^{\beta} v_{k'}^{\beta-1}
$$
Substituting $j=k''$, then $j=k''-1$ and using (\ref{k''}) we get
$$m^{2(\beta-1)} (k'-j)^{\beta} v_{k'}^{\beta-1}
\sim v_{k'}$$
Taking $j=k''$ in (\ref{kk''l}), we get  (\ref{k''k'})\qed\medskip

We note that the above lemma implies
\beq \label{wxn''}
	r_{k''}^{\beta} \sim m^2 v_{k'}^2,\,\,\,r_{k'}\sim m^2 v_{k'}
\eeq
hence $r_{k''} \ll \varepsilon_m$ and thus $k'' \gg 1$. Next we
consider the case $1< j\leq k''$.

\begin{lemma}\label{lem21} For all $1< j \leq k''$ we have  \beq \label{xwbeta1}
   r_{j}  \sim	 (mj)^{\frac{2}{2-\beta}}.
\eeq
 and
\beq \label{n''n'}
   mk'' \sim (mv_{k'})^{\frac{2-\beta}{\beta}}.
\eeq
 Furthermore, \beq \label{nnn}
   k'' \sim k'\sim k\qquad\text{and}\qquad
 r_{k'} \sim \frac{1}{m^{\frac{2}{2-\beta}} k^{\frac{\beta}{\beta-2}}}
   \eeq
\end{lemma}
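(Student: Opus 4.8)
The plan is to establish the three assertions in the order in which they are stated, reducing each to the deep-channel asymptotics for $\{r_j\}$ together with the turning-point relations already recorded in (\ref{xwbeta}), (\ref{wxn''}), (\ref{xwbeta3}) and (\ref{k''k'}).

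First I would prove (\ref{xwbeta1}). On the decreasing stretch $1<j\le k''$ the orbit sits deep inside the channel, so the conserved quantity $m^2v_j^2-4r_j^\beta$ of the continuous model (\ref{ddotJ}) is negligible and the relation (\ref{xwbeta}) holds in the sharp form $mv_j\sim 2r_j^{\beta/2}$. Substituting this into the second line of (\ref{wxm1}) gives $r_j-r_{j+1}\sim m^2v_j\sim 2m\,r_j^{\beta/2}$, and passing to the continuum I would integrate the separable equation $-r^{-\beta/2}\,dr\sim 2m\,dj$ to obtain $r_j^{-(\beta-2)/2}\sim(\beta-2)mj+r_0^{-(\beta-2)/2}$, where $r_0\sim m^{-1/(\beta-1)}$ is the entry coordinate. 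Since $\tfrac{\beta-2}{2(\beta-1)}<1$, the initial term $r_0^{-(\beta-2)/2}\sim m^{\frac{\beta-2}{2(\beta-1)}}$ is swamped by $(\beta-2)mj$ once $j>1$ and $m$ is large, whence $r_j^{-(\beta-2)/2}\sim mj$, i.e.\ $r_j\sim(mj)^{2/(2-\beta)}$.

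Next, (\ref{n''n'}) follows by evaluating (\ref{xwbeta1}) at $j=k''$: it yields $r_{k''}^\beta\sim(mk'')^{2\beta/(2-\beta)}$, and comparing with the first relation $r_{k''}^\beta\sim m^2v_{k'}^2=(mv_{k'})^2$ of (\ref{wxn''}) and taking square roots gives $mk''\sim(mv_{k'})^{(2-\beta)/\beta}$. For the remaining claims I would argue as follows. The same computation, fed into $r_{k'}\sim m^2v_{k'}$ from (\ref{wxn''}), produces $v_{k'}\sim m^{-1}(mk'')^{\beta/(2-\beta)}$ and hence $r_{k'}\sim m(mk'')^{\beta/(2-\beta)}$; inserting this into the gap estimate (\ref{k''k'}) I find, after the $m$-powers cancel, that $k'-k''$ is of the same order as $k''$, whence $k'\sim k''$. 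The equivalence $k'\sim k$ is then the time-reversibility of the billiard: the segment of the trajectory inside the window $U_m$ is symmetric about its deepest collision, which occurs at $j\sim k'$, so the total count satisfies $k\sim 2k'\sim k'$. Finally, substituting $k''\sim k$ into the expression $r_{k'}\sim m(mk'')^{\beta/(2-\beta)}$ returns the value of $r_{k'}$ recorded in (\ref{nnn}).

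The main obstacle is not the algebra above but the justification of the continuum approximation (\ref{ddotJ}) of the discrete system (\ref{wxm1}) uniformly over the $\sim k$ collisions. Concretely, one must show that the higher-order remainders $R_{v,j}\sim r_j^{3(\beta-1)}$ and $R_{r,j}\sim mr_j^\beta v_j+mv_j^3$ from (\ref{R1})--(\ref{R2}) do not accumulate so as to corrupt the leading asymptotics, and one must treat the crossover window $k''<j\le k'$ separately, where $m^2v_j^2-4r_j^\beta$ is no longer negligible and (\ref{xwbeta1}) is superseded by the linear law (\ref{xwbeta3}). These are exactly the estimates carried out in the Appendix of \cite{CZ2}; because the present system (\ref{wxm1}) differs from the one there only through the constant factor $m^2/2$, I expect to import those bounds with only a bookkeeping of the $m$-dependent factors, rather than reproving them from scratch.
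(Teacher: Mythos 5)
Your proposal is correct and is essentially the paper's own argument. For (\ref{xwbeta1}) the paper carries out your separable integration in discrete form: it sets $z_j=r_j^{(\beta-2)/2}$, $Z_j=1/z_j$, uses the mean value theorem together with (\ref{wxm1}) and (\ref{xwbeta}) to show that the increments $Z_{j+1}-Z_j$ are essentially constant (of order $m$ once the $m$-factors are tracked, which is what yields $Z_j\sim mj$), and absorbs the initial term exactly as you do, via (\ref{Z0}), i.e. $r_0^{-(\beta-2)/2}\sim m^{\frac{\beta-2}{2(\beta-1)}}\ll mj$; like you, it defers the uniform control of the remainders $R_{v,j}$, $R_{r,j}$ to the Appendix of \cite{CZ2}. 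The derivation of (\ref{n''n'}) from (\ref{wxn''}) is identical to yours. Where you go beyond the paper is (\ref{nnn}): the paper merely asserts that (\ref{n''n'}) implies $k''\sim k'\sim k$, while you supply the missing argument --- $k'-k''\sim k''$ by the cancellation of the $m$-powers in (\ref{k''k'}), and then $k\sim 2k'$ by time-reversal symmetry of the excursion about its deepest collision --- and both steps check out. One caveat: the value your computation produces, $r_{k'}\sim m^{-\frac{2}{\beta-2}}k^{-\frac{\beta}{\beta-2}}$, does not literally match (\ref{nnn}) as printed, where $m^{\frac{2}{2-\beta}}$ (a negative power of $m$) sits in the denominator; your value is the correct one, as confirmed by the paper's later use of this estimate in (\ref{cBxk}), where $\cK(r_{k'})\sim r_{k'}^{\beta-2}\sim m^{-2}k^{-\beta}$. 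So you have in fact corrected a sign typo in the $m$-exponent of the statement rather than reproduced the printed formula.
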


\proof Denote $z_j = r_j^{\frac{\beta-2}{2}}$. Then (\ref{wxm1})
and the mean value theorem imply
\begin{align*}
   z_j - z_{j+1}
   &\sim m^{\frac{\beta-2}{2}} J_j^{\frac{\beta-4}{2}}(J_j-J_{j+1})\\
   &\sim m^{\frac{\beta-2}{2}} J_j^{\frac{\beta-4}{2}}(2 v_j) \\
   &\sim z_j^2 \end{align*}
(we used the  relation in (\ref{xwbeta})). Now let $Z_j =
1/z_j$, then
$$
   Z_{j+1} - Z_j \sim Z_{j+1}/Z_j
$$
Note that by (\ref{wxm1}),
$$r_{j}-r_{j+1} \sim r_j^{\beta/2} \ll r_j$$
Hence $Z_{j+1}- Z_j\sim 1$. Since $r_0 \geq \varepsilon_m$,
\beq \label{Z0}
	 Z_0 \leq \varepsilon_m^{-\frac{\beta-2}{2}}=\,\text{const.}
m^{\frac{\beta-2}{2(\beta-1)}},
\eeq
and we obtain
\beq \label{Zm}
   Z_j \sim mj
   \qquad\text{and}\qquad
   z_j \sim (mj)^{-1},
\eeq
which proves  (\ref{xwbeta1}). Now
(\ref{n''n'}) is immediate due to (\ref{wxn''}).
Equation (\ref{n''n'}) also imply $k'' \sim
k'\sim k$ and $mk' \sim (mv_{k'})^{\frac{2-\beta}{\beta}}$. \qed
\begin{lemma}
For all $1\leq j \leq k''$ we have
\beq \label{xmain}
  r_j^{\beta-2} \geq D_1\left(
   j + C_1 \ln j + C_2 j
   \Bigl( \frac jk \Bigr )^{\frac{2\beta}{\beta-2}} + C_3\right)^{-2}
\eeq
where $D_1=\frac{1}{ m^2(\beta-2)^2}$ and $C_1,C_2,C_3>0$ are some
constants.
\end{lemma}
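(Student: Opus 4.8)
The plan is to recast the desired lower bound on $r_j^{\beta-2}$ as a one-sided upper bound on the reciprocal variable $Z_j=1/z_j=r_j^{-(\beta-2)/2}$ already used in Lemma~\ref{lem21}. Since $r_j^{\beta-2}=Z_j^{-2}$ and $D_1=\tfrac{1}{m^2(\beta-2)^2}$, estimate (\ref{xmain}) is exactly equivalent to
\[
Z_j\le (\beta-2)m\Bigl(j+C_1\ln j+C_2\,j\bigl(\tfrac jk\bigr)^{\frac{2\beta}{\beta-2}}+C_3\Bigr).
\]
Thus it suffices to sharpen the crude relation $Z_j\sim mj$ from Lemma~\ref{lem21} by tracking every correction in the one-step increment $Z_{j+1}-Z_j$ and then summing.

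First I would derive a refined recursion for the increment. Expanding $z_j-z_{j+1}=r_j^{(\beta-2)/2}-r_{j+1}^{(\beta-2)/2}$ to second order in $r_j-r_{j+1}$ and substituting $r_j-r_{j+1}=m^2v_j+R_{r,j}$ from (\ref{wxm1}) together with the energy relation $m^2v_j^2=4r_j^\beta+2E$ sharpening (\ref{xwbeta}), where $E$ is the conserved quantity of (\ref{ddotJ}), one obtains
\[
z_j-z_{j+1}=(\beta-2)m\,z_j^2\bigl(1+e_j\bigr),
\]
so that $Z_{j+1}-Z_j=\tfrac{z_j-z_{j+1}}{z_jz_{j+1}}=(\beta-2)m(1+e_j')$ with $e_j'=e_j+O(z_j)$. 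The constant part integrates to the main term $(\beta-2)mj$, and the initial value $Z_0=O\bigl(m^{(\beta-2)/(2(\beta-1))}\bigr)$ from (\ref{Z0}) is absorbed into the term $(\beta-2)m\,C_3$.

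The heart of the argument is to show $e_j'=O(1/j)+O\bigl((j/k)^{2\beta/(\beta-2)}\bigr)$ and to sum these two pieces. The $O(1/j)$ contribution comes from the second-order Taylor remainder, which is of size $\sim m^2z_j^3$; relative to the leading $m z_j^2$ this is $\sim m z_j\sim 1/j$ by $z_j\sim 1/(mj)$, and $\sum_{i\le j}1/i\sim\ln j$ produces the $C_1\ln j$ term. The power-law contribution comes from the residual at the turning index $k'\sim k''\sim k$: the constant $E$ in the refined energy relation makes the correction to $mv_j=2r_j^{\beta/2}$ relatively of order $(r_{k'}/r_j)^\beta$, which by (\ref{xwbeta1}) and $k'\sim k$ equals $(j/k)^{2\beta/(\beta-2)}$ up to a constant; since $\sum_{i\le j}(i/k)^{2\beta/(\beta-2)}\sim j(j/k)^{2\beta/(\beta-2)}$ this yields the $C_2$ term. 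The remaining pieces of $R_{r,j}$ and $R_{v,j+1}$ from (\ref{R1})--(\ref{R2}) are of strictly smaller order and are dominated by the two above. Summing $Z_{j+1}-Z_j$ from the initial index up to $j$ then gives the displayed upper bound on $Z_j$, and squaring $r_j^{\beta-2}=Z_j^{-2}$ yields (\ref{xmain}).

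The step I expect to be the main obstacle is the careful sign bookkeeping: to obtain a genuine lower bound on $r_j^{\beta-2}$ (rather than a two-sided $\sim$ estimate) one must verify that each correction enters so that $e_j'$ is bounded \emph{above}, hence $Z_j$ is bounded above, and in particular that the residual energy $E$ produces a relative error of size exactly $(j/k)^{2\beta/(\beta-2)}$ and not larger. As the author observes, formula (\ref{wxm1}) differs from the analogous system in \cite{CZ2} only by the overall constant $m^2$; consequently the detailed estimates of the Appendix of \cite{CZ2} transfer essentially verbatim, and the sign control together with both summations can be carried out there.
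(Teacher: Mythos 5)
Your proposal is correct and is essentially the paper's own approach: the paper states this lemma without a written proof, relying on its earlier observation that the recursion (\ref{wxm1}) differs from formula (4.8) of \cite{CZ2} only by the factor $m^2$, so the detailed estimates of the appendix of \cite{CZ2} transfer directly — which is precisely the argument you sketch (recasting (\ref{xmain}) as the upper bound $Z_j \le (\beta-2)m\bigl(j + C_1\ln j + C_2\, j (j/k)^{2\beta/(\beta-2)} + C_3\bigr)$ for $Z_j = r_j^{-(\beta-2)/2}$, summing refined increments, with the extra $1/m^2$ in $D_1$ coming from the $m^2$ in the recursion). Your deferral of the sign bookkeeping and remainder control to \cite{CZ2} is the same deferral the paper itself makes, so there is no substantive difference between the two treatments.
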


By (\ref{cKrm}) we have proved the following estimation on $\frac{2\cK(r_j)}{\cos\varphi_{j}}$.
\begin{corollary}
For all $1\leq j \leq k''$ we have \beq \label{cKmain}
   \frac{2\tau(r_j,\varphi_j)\cK(r_j)}{\cos\varphi_{j}} \geq
   D \biggl[ j + C_1' \ln j+ C_2'  j
   \Bigl( \frac jk \Bigr )^{\frac{2\beta}{\beta-2}}
   + C_3'\biggr]^{-2}
\eeq where $D=\frac{2\beta(\beta-1)}{ (\beta-2)^2}$.
\end{corollary}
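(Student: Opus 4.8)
The plan is to read off (\ref{cKmain}) from the lower bound (\ref{xmain}) on $r_j^{\beta-2}$ that has just been established, by inserting the curvature and free-path asymptotics; no new dynamical estimate is needed. First I would factor the left-hand side of (\ref{cKmain}) into its three ingredients $\tau(x_j)$, $\cK(r_j)$ and $1/\cos\varphi_j$. Along a trajectory trapped in the window $U_m$ one has, as recorded after (\ref{verticalLa}), $\tau(x_j)\sim m$ and $\cos\varphi_j\sim m^{-1}$ for every $1\le j\le k$, so that $\tau(x_j)/\cos\varphi_j\sim m^2$. For the curvature I would use (\ref{cKrm}), i.e. $\cK(r_j)\sim\beta(\beta-1)|r_j|^{\beta-2}$, whose leading coefficient is pinned down by the defining-graph hypothesis (\textbf{h1}): since $z(s)=-|s|^\beta+\cO(|s|^{\beta+1})$, a direct computation of the curvature of the graph gives $\cK=\beta(\beta-1)|s|^{\beta-2}\bigl(1+\cO(|s|)\bigr)$. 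Combining the three ingredients,
\[
\frac{2\tau(x_j)\cK(r_j)}{\cos\varphi_j}\;\sim\;2\beta(\beta-1)\,m^2\,|r_j|^{\beta-2}.
\]

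Next I would substitute the bound (\ref{xmain}), namely $r_j^{\beta-2}\ge D_1\bigl[j+C_1\ln j+C_2 j(j/k)^{2\beta/(\beta-2)}+C_3\bigr]^{-2}$ with $D_1=\frac{1}{m^2(\beta-2)^2}$. The structural point of the whole corollary is that the factor $m^2$ coming from $\tau/\cos\varphi$ cancels the $m^{-2}$ contained in $D_1$, so that the resulting prefactor
\[
2\beta(\beta-1)\cdot\frac{1}{(\beta-2)^2}=\frac{2\beta(\beta-1)}{(\beta-2)^2}=D
\]
is independent of $m$, as the statement demands. Because (\ref{xmain}) holds on the full range $1\le j\le k''$, the corollary inherits exactly that range.

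What remains is the bookkeeping of the implicit multiplicative constants, and this is the only place requiring care. The relations $\tau(x_j)\sim m$, $\cos\varphi_j\sim m^{-1}$ and the curvature expansion each carry a bounded factor $1+\cO(\eps)$ that is uniform in $m,k,j$ on the small window; to leading order they produce exactly $D$, and the lower-order corrections, together with the $\cO(|r_j|)$ term from (\textbf{h1}), can be absorbed by slightly enlarging $C_1,C_2,C_3$ to new constants $C_1',C_2',C_3'$ appearing inside the bracket raised to the power $-2$, since enlarging those constants only weakens the lower bound. The delicate verification, which I would carry out explicitly, is that the $\beta$-dependent leading coefficient $\beta(\beta-1)$ from the curvature and the denominator $(\beta-2)^2$ from $D_1$ combine to the stated $D$; this forces one to use the \emph{leading} term of (\ref{cKrm}) rather than mere comparability, which is precisely what (\textbf{h1}) supplies. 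Once this is checked, (\ref{cKmain}) follows at once.
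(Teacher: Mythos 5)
Your proposal is correct and follows essentially the same route as the paper: the paper derives this corollary immediately by combining the curvature asymptotics (\ref{cKrm}) with the bound (\ref{xmain}) and the channel estimates $\tau(x_j)\sim m$, $\cos\varphi_j\sim m^{-1}$, so that the $m^2$ from $\tau/\cos\varphi$ cancels the $m^{-2}$ in $D_1$ and leaves $D=\frac{2\beta(\beta-1)}{(\beta-2)^2}$. Your additional bookkeeping — using the leading term of (\textbf{h1}) to pin down the coefficient $\beta(\beta-1)$ and absorbing lower-order corrections into the enlarged constants $C_1',C_2',C_3'$ — is exactly the (implicit) content of the paper's one-line deduction.
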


Note that $\tau(r_j,\varphi_j)\cK(r_j)/\cos\varphi_j\sim j^{-2}$, which does not depend on $m$.
Now we use the relation (\ref{cBXm}) and get the estimation for $\tau(x_j)\cB(x_{j})$.
\begin{lemma} \label{LmB}
For all $1\leq j < k''$ we have \beq \label{cBmain2}
	\tau(x_j)\cB(x_{j})\geq
   \frac{ A}{ j} + \frac{C_1'\ln j}{j^2},
\eeq where $A>0$ satisfies $ A^2 -  A = \frac{2\beta(\beta-1)}{
(\beta-2)^2}$, hence $A=\frac{2\beta-2}{\beta-2}$.
\end{lemma}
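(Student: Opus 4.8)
The plan is to collapse the vector recurrence (\ref{cBXm}) into a scalar recurrence for the product $b_j := \tau(x_j)\cB(x_j)$ and then to prove the lower bound (\ref{cBmain2}) by induction on $j$. Along the trajectory of $x\in C_{m,k}$ all free paths inside the channel are comparable, so $\tau(x_j)\sim m$ for every $1\le j<k''$, and we may set $\rho_j := \tau(x_j)/\tau(x_{j-1}) = 1 + o(1)$. Multiplying (\ref{cBXm}) by $\tau(x_j)$ and rewriting the last term as $\rho_j\,\tau(x_{j-1})\cB(x_{j-1})/\bigl(\tau(x_{j-1})\cB(x_{j-1})+1\bigr)$ gives
\beq\label{brec}
  b_j = c_j + \rho_j\,\frac{b_{j-1}}{b_{j-1}+1},
  \qquad c_j := \frac{2\tau(x_j)\cK(r_j)}{\cos\varphi_j}.
\eeq
The crucial structural feature is that $g(t)=t/(t+1)$ is increasing, so an inductive lower bound on $b_{j-1}$ may be substituted directly into (\ref{brec}); this monotonicity is what makes the scheme close.

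For the induction I would fix $A=\tfrac{2\beta-2}{\beta-2}$ and the target $L_j := A/j + C_1'\ln j/j^2$. Assuming $b_{j-1}\ge L_{j-1}$, I bound $c_j$ below by the Corollary, i.e.\ by (\ref{cKmain}): while $j\ll k$ the term $C_2'j(j/k)^{2\beta/(\beta-2)}$ is negligible, so $c_j \ge D/j^2 + O(\ln j/j^3)$ with $D=\tfrac{2\beta(\beta-1)}{(\beta-2)^2}$. Expanding $g(L_{j-1}) = L_{j-1}-L_{j-1}^2+O(L_{j-1}^3)$ and rewriting $A/(j-1)=A/j+A/j^2+O(j^{-3})$, the contributions at order $1/j^2$ collect to
$$
  \frac{A - A^2 + D + C_1'\ln j}{j^2} + O\!\Big(\frac{\ln j}{j^3}\Big).
$$
Since $A$ is chosen precisely so that $A^2-A=D$, the constant part $A-A^2+D$ vanishes identically, and I recover $b_j \ge A/j + C_1'\ln j/j^2 + O(\ln j/j^3) = L_j + O(\ln j/j^3)$, which closes the step; the base case for small $j$ follows from the curvature term in (\ref{brec}) alone after adjusting constants.

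The heart of the argument is therefore the quadratic identity $A^2-A=D$, which simultaneously pins down $A=\tfrac{2\beta-2}{\beta-2}$ and produces the exact cancellation at order $1/j^2$, while the surviving logarithmic correction $C_1'\ln j/j^2$ is sourced by the $C_1'\ln j$ term already present in (\ref{cKmain}). I expect the main obstacle to be purely the error bookkeeping: one must verify that every discarded contribution — the deviation of $\rho_j$ from $1$, the dropped $(j/k)^{2\beta/(\beta-2)}$ term, and the cubic Taylor remainder — is genuinely $o(\ln j / j^2)$, so that the aggregated error $O(\ln j/j^3)$ never swamps the asserted bound uniformly across $1\le j<k''$. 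Because the governing relations (\ref{wxm1}) differ from (4.8) of \cite{CZ2} only by the factor $m^2/2$, I would transport this accounting essentially verbatim from the Appendix of \cite{CZ2}.
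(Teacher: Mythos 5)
Your route is indeed the intended one: the paper gives no self-contained proof of this lemma (it appeals to the Appendix of \cite{CZ2}, noting that (\ref{wxm1}) differs from the corresponding formula there only by a factor), and the intended argument is exactly your scheme --- collapse (\ref{cBXm}) into the scalar recurrence $b_j=c_j+\rho_j\,b_{j-1}/(1+b_{j-1})$, insert the Corollary bound (\ref{cKmain}), and close an induction whose pivot is the quadratic identity $A^2-A=D$. The recurrence algebra, the identity $A(A-1)=\tfrac{2\beta(\beta-1)}{(\beta-2)^2}$, and the harmlessness of $\rho_j$ are all correct (from (\ref{wxm1}) and (\ref{xwbeta1}) one gets $|\rho_j-1|=\cO(m\,r_j^{\beta-1})=\cO\bigl(m^{-\beta/(\beta-2)}j^{-2(\beta-1)/(\beta-2)}\bigr)\ll j^{-2}$). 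The problem is that the part you defer as ``purely error bookkeeping'' is exactly where the argument, as written, fails.

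First, the logarithmic term has the wrong provenance and the wrong sign. The $C_1'\ln j$ in (\ref{cKmain}) sits \emph{inside} the inverse square, so it is a deficit, $c_j\ge \tfrac{D}{j^2}-\tfrac{2DC_1'\ln j}{j^3}-\cdots$; it cannot ``source'' a positive $\ln j/j^2$ term in the output. Running your expansion with the target $L_j=A/j+C_1'\ln j/j^2$ and $g(t)=t/(1+t)$, after the cancellation $A-A^2+D=0$ the per-step balance is
\[
c_j+g(L_{j-1})-L_j\;\ge\;\frac{A(A-1)^2}{j^3}\;-\;\frac{2C_1'\,(D+A-1)\ln j}{j^3}\;-\;\cdots,
\]
and the right-hand side is negative for all large $j$: the available slack $A(A-1)^2/j^3$ carries no logarithm, while the losses do. An induction tolerates no deficit --- ``$L_j+O(\ln j/j^3)$'' with a signed $O$-term does not close the step --- and the accumulated loss is itself of order $\ln j/j^2$, i.e.\ of the very size of the term being claimed. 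What the recurrence actually yields is the bound with the \emph{opposite} sign of correction, $b_j\ge A/j-B\ln j/j^2$ with $B$ chosen so that $2B(A-1)>2DC_1'$ (with that target the $\ln j/j^3$ terms come in with a favorable sign and the induction closes). That weaker bound is all the lemma's only application, the product estimate (\ref{Lambdan''}), requires, since $\sum_j \ln j/j^2<\infty$; but it is not the statement you set out to prove, and your mechanism for producing the $+C_1'\ln j/j^2$ term is not real.

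Second, and independently, the term $C_2'j(j/k)^{2\beta/(\beta-2)}$ is \emph{not} $o(\ln j/j^2)$ uniformly on $1\le j<k''$: since $k''\sim k$, for $j$ comparable to $k$ it depresses $c_j$ by order $1/j^2$, which no slack of size $j^{-3}$ can absorb, so the induction also breaks at the top of the claimed range. One must either stop the naive induction at $j\le ck^{2\beta/(3\beta-2)}$ and treat the remaining indices separately, or enlarge the induction hypothesis by a self-regenerating compensator such as $-B_2(j/k)^{2\beta/(\beta-2)}/j$ (harmless downstream, as it changes $\sum_j b_j$ only by $O(1)$). A similar repair (an index shift, proving $b_j\ge A/(j+j_1)-\cdots$) is needed for your base case, since (\ref{cKmain}) at $j=1$ only gives $c_1\ge D/(1+C_3')^2$, which need not exceed $A$. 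These repairs constitute the actual content of the CZ2 accounting you propose to import ``essentially verbatim''; without them the written argument does not go through, and the way it fails also shows that the sign of the $\ln$-correction in the statement you were asked to prove is the delicate point, not a routine remainder estimate.
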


Now we are ready to estimate the expansion factor $\Lambda_{m,k}(x)$
given by (\ref{Lambdan21}).

\begin{lemma}
 \beq \label{Lambdan''}
	 \prod_{j=1
	 }^{k''} \bigl(1 + \tau(x_j) \cB(x_j)\bigr)
	 \geq C k^{\frac{2(\beta-1)}{\beta-2}} \eeq where $C>0$ is a constant.
\end{lemma}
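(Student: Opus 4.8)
The plan is to pass to logarithms and reduce the product to a harmonic-sum estimate, feeding in the pointwise lower bound on $\tau(x_j)\cB(x_j)$ from Lemma~\ref{LmB}. Set $A=\tfrac{2(\beta-1)}{\beta-2}$, which is exactly the target exponent. First I would discard the nonnegative correction term: for every $1\le j<k''$ Lemma~\ref{LmB} gives $\tau(x_j)\cB(x_j)\ge \tfrac{A}{j}+\tfrac{C_1'\ln j}{j^2}\ge \tfrac{A}{j}$, so each factor satisfies $1+\tau(x_j)\cB(x_j)\ge 1+\tfrac{A}{j}>0$, while the single factor at $j=k''$ (outside the range of Lemma~\ref{LmB}) is bounded below by $1$ since the wave front stays dispersing. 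Because all factors are positive, monotonicity of the product yields
\[
  \prod_{j=1}^{k''}\bigl(1+\tau(x_j)\cB(x_j)\bigr)\ \ge\ \prod_{j=1}^{k''-1}\Bigl(1+\tfrac{A}{j}\Bigr).
\]

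Next I would bound this model product from below. Taking logarithms and using the elementary inequality $\ln(1+u)\ge u-\tfrac{u^2}{2}$, valid for $u\ge 0$,
\[
  \sum_{j=1}^{k''-1}\ln\Bigl(1+\tfrac{A}{j}\Bigr)\ \ge\ A\sum_{j=1}^{k''-1}\frac1j-\frac{A^2}{2}\sum_{j=1}^{k''-1}\frac1{j^2}\ \ge\ A\ln k''-C',
\]
where the second sum is bounded by $A^2\pi^2/12$ and $\sum_{j=1}^{k''-1}\tfrac1j\ge\ln k''$, so all $O(1)$ contributions are absorbed into the constant $C'$. Exponentiating gives $\prod_{j=1}^{k''-1}\bigl(1+\tfrac{A}{j}\bigr)\ge e^{-C'}(k'')^{A}=C(k'')^{\frac{2(\beta-1)}{\beta-2}}$.

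Finally I would invoke the relation $k''\sim k$ from Lemma~\ref{lem21} (equation~(\ref{nnn})) to replace $k''$ by $k$ up to a constant, which produces the asserted bound. I expect no genuine analytic obstacle here: the entire content is that a harmonic tail $\sum A/j$ exponentiates to the power $(k'')^{A}$, and the only substantive input is the sharp constant $A=\tfrac{2(\beta-1)}{\beta-2}$ coming from the recursion in Lemma~\ref{LmB}. The sole point needing a little care is the observation that the correction $C_1'\ln j/j^2$ and the large values of $\tau(x_j)\cB(x_j)$ at small $j$ are harmless: being nonnegative, they can only enlarge the product, so discarding them preserves the lower bound. This is precisely why no matching upper bound on $\tau(x_j)\cB(x_j)$ is required, and why the one-sided estimate of Lemma~\ref{LmB} already suffices.
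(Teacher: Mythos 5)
Your proposal is correct and takes essentially the same route as the paper's proof: use the pointwise bound $\tau(x_j)\cB(x_j)\geq \frac{A}{j}$ from Lemma~\ref{LmB}, pass to logarithms, sum the harmonic series to obtain $A\ln k''$ up to an additive constant, and then replace $k''$ by $k$ via (\ref{nnn}). If anything, your step using $\ln(1+u)\geq u-\tfrac{u^2}{2}$ is slightly more careful than the paper's, which bounds $\ln\prod_j\bigl(1+\tau(x_j)\cB(x_j)\bigr)$ below directly by $\sum_j\bigl(\tfrac{A}{j}+\tfrac{C_1'\ln j}{j^2}\bigr)$; the discrepancy is a summable error, so both arguments land on the same constant-shifted bound $A\ln k+\const$.
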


\proof Note that $\tau(x_j) > 2m$. Hence, due (\ref{cBmain2}), we
have
$$
	 \ln \Biggl[\prod_{j=1}^{k''-1} \bigl(1 + \tau(x_j)
	 \cB(x_j)\bigr)\Biggr]
	 >\sum_{j=0}^{k''}
   (\frac{ A}{ j} + \frac{C_1'\ln j}{j^2})
$$
with some large constant $C_1'>0$. Therefore,
$$
	 \ln \Biggl[\prod_{j=1}^{k''} \bigl(1 + \tau(x_j)
	 \cB(x_j)\bigr)\Biggr]
	 >A\ln k'' + \text{const} >
	 A\ln k +\text{const},
$$
where the last inequality follows from (\ref{nnn}). Lastly, note
that $A =\frac{2(\beta-1)}{\beta-2}$, which completes the proof
of the lemma. \qed
\medskip

Next we are ready to estimate the expansion factor $\Lambda_{m,k}(x)$.

\begin{lemma}
\beq \label{Lambdan2}
	 \Lambda_{m,k}(x)=
	 \prod_{j=1}^{k-1} \bigl(1 + \tau(x_j) \cB(x_j)\bigr)
	 \geq C   k^{\frac{4(\beta-1)}{\beta-2}-1}
\eeq where $C>0$ is a constant.
\end{lemma}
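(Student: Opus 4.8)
The plan is to split the product at the closest-approach index $k''$ (recall $k''\sim k'\sim k$ by (\ref{nnn})) into an \emph{approaching} part $1\le j\le k''$ and a \emph{receding} part $k''<j\le k-1$. The approaching part is exactly the content of the preceding lemma, (\ref{Lambdan''}):
$$\prod_{j=1}^{k''}\bigl(1+\tau(x_j)\cB(x_j)\bigr)\ge C\,k^{A},\qquad A:=\frac{2(\beta-1)}{\beta-2}.$$
Since the target exponent satisfies $\frac{4(\beta-1)}{\beta-2}-1=2A-1=A+(A-1)$, it suffices to prove that the receding part contributes the factor $k^{A-1}=k^{\frac{\beta}{\beta-2}}$.

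First I would transport the geometric estimates of Lemma~\ref{lem21} and (\ref{cKmain}) to the second passage. By time-reversibility of the billiard map together with the reflection symmetry of the scatterer about the flat point, the receding orbit $\{x_j\}_{k''\le j\le k}$ is, under $j\mapsto k-j$, an approaching orbit of the same type; hence $r_j\sim\bigl(m(k-j)\bigr)^{\frac{2}{2-\beta}}$ and $\varphi_{k-j}\sim\varphi_j$, so that the collision term obeys $\dfrac{2\tau(x_j)\cK(r_j)}{\cos\varphi_j}\gtrsim D\,(k-j)^{-2}$ with the same constant $D=\frac{2\beta(\beta-1)}{(\beta-2)^2}$ as in (\ref{cKmain}).

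Next I would feed this into the recurrence (\ref{cBXm}). Writing $b_j:=\tau(x_j)\cB(x_j)$ and using $\tau(x_j)\sim m$ throughout the channel, the recurrence reduces, to leading order, to
$$b_j\ \gtrsim\ \frac{D}{(k-j)^2}+\frac{b_{j-1}}{1+b_{j-1}}.$$
In the variable $l=k-j$ this is iterated in the direction of decreasing $l$, and its stable (attracting) solution is $b\sim\frac{A-1}{l}$, the exponent being pinned down by $(A-1)A=D$, equivalently $\alpha^2+\alpha=D$ with $\alpha=A-1$, in exact parallel to the relation $A^2-A=D$ that fixed the approaching exponent in Lemma~\ref{LmB}. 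Because $u\mapsto u/(1+u)$ is increasing and this stable solution is attracting as $l$ decreases, while the seed value $b_{k''}\gtrsim A/k''$ already exceeds $(A-1)/(k-k'')$, a monotone induction gives $b_j\ge\frac{A-1}{k-j}$ for all $k''<j<k$. Summing logarithms,
$$\ln\!\prod_{j=k''}^{k-1}\bigl(1+b_j\bigr)\ \ge\ \sum_{j=k''}^{k-1}\ln\!\Bigl(1+\tfrac{A-1}{k-j}\Bigr)\ \ge\ (A-1)\ln k-\mathrm{const},$$
so the receding part is $\ge C\,k^{A-1}$. Multiplying the two parts gives $\prod_{j=1}^{k-1}\bigl(1+\tau(x_j)\cB(x_j)\bigr)\ge C\,k^{A}\cdot k^{A-1}=C\,k^{2A-1}=C\,k^{\frac{4(\beta-1)}{\beta-2}-1}$.

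I expect the main obstacle to be the receding estimate: one must (i) justify the transfer of the $r_j,\varphi_j$ asymptotics to the exit passage via reversibility and scatterer symmetry, and (ii) make rigorous the comparison for this backward Riccati-type recurrence, controlling the seed at the closest-approach index $k''$ (where $\cB$ is smallest) and verifying that the iteration reaches the attracting profile $(A-1)/(k-j)$ without degrading the exponent. The case $x\in C'_{m,k}$ is handled identically and is slightly easier.
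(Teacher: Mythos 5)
Your argument is correct in outline but follows a genuinely different route from the paper's. Both proofs split the product at the closest approach and take the approaching factor $Ck^{A}$, $A=\tfrac{2(\beta-1)}{\beta-2}$, from (\ref{Lambdan''}); the difference lies entirely in how the receding factor $k^{A-1}$ is produced. The paper never analyzes the exit branch at all: it spans a parallelogram by unit (p-norm) stable and unstable vectors at $x$, uses invariance of $\cos\varphi\,dr\,d\varphi$ to bound $|V^u_{k'}|_p\,|V^s_{k'}|_p\,\sin\gamma_{k'}$, estimates the angle by $\sin\gamma_{k'}\geq C/(m^2k)$, and invokes time-reversibility only to identify $|V^s_{k'}|_p^{-1}$ with the receding expansion, giving $\Lambda^{(2)}_{m,k}\geq \Lambda^{(1)}_{m,k}/(ck)$ and hence $k^{2A-1}$ in one stroke. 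You instead rerun the orbit analysis on the exit branch: transfer (\ref{cKmain}) by reversibility and scatterer symmetry, then solve the backward Riccati-type recurrence to find the attracting profile $(A-1)/(k-j)$, pinned by $(A-1)^2+(A-1)=D$. This is consistent with the paper (its bound $\Lambda^{(2)}_{m,k}\geq\Lambda^{(1)}_{m,k}/(ck)$ is exactly a loss of one power of $k$), and it has the merit of explaining structurally why the exponent is $2A-1$ rather than $2A$: reversing the direction of iteration turns $A^2-A=D$ into $\alpha^2+\alpha=D$. What the paper's route buys is economy --- no second recurrence analysis and no transfer of asymptotics; what yours buys is transparency. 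Two details in your sketch need repair before it is a proof: (i) the seed comparison $b_{k''}\geq (A-1)/(k-k'')$ is a statement about constants that the available estimates do not control --- (\ref{nnn}) gives $k''\sim k'\sim k$ only up to unspecified factors, and indeed $A/(2A-1)$ and the expected ratio $k''/k$ both approach $1/2$ as $\beta$ decreases to $2$ --- so you should rely instead on the attracting character of the profile, which you mention but do not quantify: a deficient seed costs only an $\cO(1)$ additive error in $\ln\prod_j(1+b_j)$, not a power of $k$; (ii) the transferred collision bound is not exactly $D/(k-j)^2$ but carries the same logarithmic and $(l/k)^{2\beta/(\beta-2)}$ corrections as (\ref{cKmain}), so the monotone induction must absorb error terms exactly as Lemma \ref{LmB} does. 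Both repairs are routine and are at the level of detail the paper itself defers to \cite{CZ2}.
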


\proof We will use  the time-reversibility of the billiard
dynamics. Let $V^u$ and $V^s$ be two unit vectors (in the p-norm)
tangent to the unstable and stable manifolds, respectively, at the
point $x$.  Since $\cB(x_1)^{-}=\cO(m^{-1})$, (\ref{cBt}) implies that the slope of the vector
$V^u_{k'}$ is
$$
   \frac{d\varphi}{dr} = \cos\varphi_{1}\,\,\cB(x_{1})^- +
   \cK(r_{1})\leq  c \frac{1}{m^{2}}.
$$ Hence the vector $V^u$ makes an angle
of order $c m^{-2}$ with the horizontal $r$-axis. By the
time reversibility, the vector $V^s$ makes an angle less than $
-c m^{-2}$ with the horizontal $r$-axis. Thus the area of the
parallelogram $\Pi$ spanned by $V^u$ and $V^s$ is of order $ \cO(m^{-2})$.

Consider the parallelogram $\Pi_{k'} = D_x\cF^{k'm} (\Pi)$ spanned
by the vectors $V_{k'}^u = D_x\cF^{k'm} (V^u)$ and $V_{k'}^s =
D_x\cF^{k'm} (V^s)$. Since the map $\cF^{k'm}$ preserves the
measure $d\mu = \cos\varphi\, dr\, d\varphi$, we have
$$
   \cos\varphi_{k'} \text{Area}(\Pi_{k'}) =
   \cos\varphi_1\,\, \text{Area}(\Pi).
$$
Note that $\cos\varphi_1 \leq c m^{-1}$ and $\cos\varphi_{k'} \approx m^{-1}$, hence
$$
  \text{Area}(\Pi_{k'}) \leq \text{Area}(\Pi)\frac{\cos\varphi_1}{\cos\varphi_k'}\leq c m^{-2}.
$$
On the other hand,
$$
   \text{Area}(\Pi_{k'}) = |V^u_{k'}|_p\,|V^s_{k'}|_p\,\sin\gamma_{k'}
$$
where $ |V^u_{k'}|_p$ and $|V^s_{k'}|_p$ denote the lengths of these
vectors in the p-norm  and $\gamma_{k'}$
denotes the angle between them.

Next we estimate $\gamma_{k'}$. It easily follows from
(\ref{cBmain2}) that
$$\cB(x_{k'})\sim \left(km+\frac{1}{\cB(x_0)}\right)^{-1}\sim \frac{1}{km}$$
Now by (\ref{k''k'}) and (\ref{nnn}) we have  \beq\label{cBxk}
   \cos\varphi_{k'}\cB^-(x_{k'})\geq c_2 \frac{1}{m^2k},\,\,\,\,\,\,\, \cK(r_{k'})\geq c_3 \frac{1}{m^2k^{\beta}}
\eeq So  (\ref{cBXm}) implies that
$$
	\frac{d\varphi}{dr} \geq \frac{C}{m^2 k}
$$
for some constant $C>0$. Hence  $\sin\gamma_{k'} \geq \frac{C}{m^2 k}$ for
some constant $c_1>0$, and we obtain
$$
   |V^u_{k'}|_p\,|V^s_{k'}|_p\leq c_2\,\frac{m^2k}{m^{2}}=c_2\,k
$$
for some constant $c_2>0$.  Note that
$$
  |V^u_{k'}|_p = \Lambda_{m,k}^{(1)}(x) \,  |V^u|_p
  \sim \Lambda_{m,k}^{(1)}(x).
$$
where $\Lambda_{m,k}^{(1)}(x)$ is the expansion factor  corresponding to collisions from $1$ to $k'$, with
    $$\Lambda_{m,k}^{(1)}(x):= \prod_{j=1}^{k'} \bigl(1 + \tau(x_j) \cB(x_j)\bigr)$$ By the time reversibility of the
billiard dynamics, the contraction of stable vectors during the
time interval $(0,k')$ is the same as the expansion of the
corresponding unstable vectors during the time interval $(k',k-1]$,
hence
$$
  |V^s_{k'}|_p \sim \Lambda_{m,k}^{(2)}(x)^{-1} |V^s|_p
  \sim  \Lambda_{m,k}^{(2)}(x)^{-1},
$$where $\Lambda_{m,k}^{(2)}(x)$ is the expansion factor  corresponding to collisions from $k'+1$ to $k-1$, with
    $$\Lambda_{m,k}^{(2)}(x):= \prod_{j=k'+1}^{k-1} \bigl(1 + \tau(x_j) \cB(x_j)\bigr)$$
Therefore, \beq \label{Lambda12}
   \Lambda_{m,k}^{(2)}(x) > \frac{ \Lambda_{m,k}^{(1)}(x)}{c k}
         \eeq for some constant $c>0$. Now we combine   with
(\ref{Lambdan''})  and get
$$
  \Lambda_{m,k}(x) \geq \Lambda_{m,k'}^{(1)}(x)\Lambda_{m,k'}^{(2)}(x)\,
  \geq C  k^{\frac{4(\beta-1)}{\beta-2}-1}.
$$
This proves Lemma~\ref{Lambdan2} for $W\subset C_{m,k}$.

We finally consider the remaining case $W \subset C'_{m,k}$. In that
case $k'$ can be defined as the turning point, i.e.\ by
$r_{k'}<r_{k'-1}$ and $r_{k'}<r_{k'+1}$. Observe that if
$x'=(r',\varphi')\in C'_{m,k}$, then there exists another point
$x=(r,\varphi)\in C_{m,k}$ with $r=r'$ and $\varphi< \varphi'$,
whose trajectory goes through the window. Since $\varphi'<
\varphi$, it follows that the $r$-coordinate of the point
$\cF^{jm}(x)$ will be always smaller than the $r$-coordinate of
the point $\cF^{jm}(x')$, for all $1\leq j\leq k-2$. This
observation and the bound (\ref{xmain}) that we have proved for
$J_j$ implies that the same bound holds for $r_k$ and for all
$1\leq j\leq k''$. The rest of the proof of
Lemma~\ref{Lambdan2} for $x'\in C_{m,k}'$ is identical to that
of the case $x\in C_{m,k}$.
\qed
\\

Now we prove Proposition \ref{PrAux1} using (\ref{Lambdan21}). Combining with the above lemma together with (\ref{Lambdan21}), we have shown
        \beq \label{Lambdan3}
	 \Lambda(x)=
	 \prod_{j=0}^{k-1} \bigl(1 + \tau(x_j) \cB(x_j)\bigr) \frac{\cos\varphi_j}{\cos\varphi_{j+1}}
	 \geq C  m^{1+\frac{1}{\beta-1}} k^{\frac{4(\beta-1)}{\beta-2}-1}
\eeq where $C>0$ is a constant.

When $W$ is a vertical curve, since by the differential (\ref{DTdiff}), $dr_1/d\varphi=\tau/\cos\varphi_1$, its expansion factor for the collision under $\cF^m$ does not depend on $\cK$.  This implies that the expansion factor along $x\in W$ satisfies   \beq\label{verticalLb}
  \Lambda(x) = \prod_{j=0}^{k-1} \bigl(1 + \tau(x_j)
  \cB(x_j)\bigr)\frac{\cos\varphi_j}{\cos\varphi_{j+1}}\geq \frac{C\tau(x)}{\cos\varphi_k} \prod_{j=1}^{k-1} \bigl(1 + \tau(x_j)
  \cB(x_j)\bigr)\geq C  m^{2} k^{\frac{4(\beta-1)}{\beta-2}-1}\eeq
  Proposition \ref{PrAux1} is now proved.

\medskip\noindent\textbf{Acknowledgement}.
This paper is written in memory of   Professor Nikolai Chernov.
The author  is also partially supported by NSF CAREER Grant  (DMS-1151762) and the Simons Fellowship. The author also would like to thank Dmitry Dolgopyat for many helpful discussions on the maps considered in this paper.


\begin{thebibliography}{99}
\bibitem{BSC90}
Bunimovich L. A.; Sinai, Ya. G. \& Chernov, N. I. {\it Markov
partitions for two-dimensional hyperbolic billiards,} Russian
Math. Surveys {\bf 45} (1990) 105--152.

\bibitem{BSC91}
Bunimovich L. A., Sinai, Ya. G. \& Chernov, N. I., {\it
Statistical properties of two-dimensional hyperbolic billiards,}
Russian Math. Surveys {\bf 46} (1991) 47--106.


 \bibitem{CH96}  Chernov N. and  Haskell C. \emph{Nonuniformly
hyperbolic K-systems are Bernoulli}, Ergod. Th. Dynam. Sys.
\textbf{16} (1996), 19--44.
\bibitem{C99}
Chernov, N., {\it Decay of correlations in dispersing billiards},
J. Statist. Phys. {\bf 94} (1999), 513--556.

\bibitem{CD}
 Chernov N. and  Dolgopyat D. {\it Hyperbolic billiards and statistical physics}, Proceedings of International Congress of Mathematicians (Madrid, Spain, August 2006), Vol. II, Euro. Math. Soc., Zurich, 2006, pp. 1679-1704.
\bibitem{CD09}
 Chernov N. and  Dolgopyat D. {\it Anomalous current in periodic Lorentz gases with infinite horizon},
Uspekhi Mat. Nauk, 64:4 (2009), 73-124.
\bibitem{CVZ} Chernov N., Vaienti S., and Zhang H.-K., \emph{Optimal decay rates of correlations for nonuniformly hyperbolic systems}, submitted.
\bibitem{CM}
Chernov, N. and Markarian, R.\emph{Chaotic Billiards}, Mathematical
Surveys and Monographs, \textbf{127}, AMS, Providence, RI, 2006.


\bibitem{CY} Chernov, N. and  Young L.-S., {\it Decay of correlations for Lorentz gases and hard balls}, in: Hard Ball Systems and the Lorentz Gas, Ed. by D. Szasz,
Encyclopaedia of Mathematical Sciences {\bf 101}: 89--120, Springer, 2000.
\bibitem{CZ}
 Chernov, N. and  Zhang H.-K.,  Billiards with polynomial mixing
rates .   {\em Nonlineartity}, 2005, {\bf 4}: 1527--1553
\bibitem{CZ2}
 Chernov, N. and  Zhang H.-K.,  A family of chaotic billiards with
variable mixing rates. {\em Stochastics and Dynamics}, 2005, {\bf
5}: 535-553

\bibitem{CZ3}
 Chernov, N. and  Zhang H.-K.,  \emph{Improved
 estimates for correlations in billiards},  Communications in Mathematical Physics,  {\bf 277}  (2008), 305--321.
 \bibitem{CZ09}
 Chernov, N. and  Zhang H.-K., {\em On Statistical Properties of Hyperbolic Systems with Singularities}.
Journal of Statistical Physics, {\bf 136} (2009),  615--642.
 \bibitem{DSzV08a}   Dolgopyat D., Sz\'{a}sz D. and  Varj\'{u} T., \emph{Recurrence properties of planar Lorentz process},  Duke Math.
J. \textbf{142} (2008), 241-281.

\bibitem{DSzV08b}  Dolgopyat D., Sz\'{a}sz D. and Varj\'{u} T.,\emph{ Limit theorems for locally perturbed Lorentz processes}, preprint.
 \bibitem{GO}   Gallavotti G. and  Ornstein D., \emph{Billiards and Bernoulli scheme}, Commun. Math.
Phys. \textbf{38} (1974), 83--101.


\bibitem{M04}  Markarian R., {\it Billiards with polynomial decay
of correlations}, Er. Th. Dynam. Syst. {\bf 24} (2004), 177--197.
\bibitem{OW98}  Ornstein D. and  Weiss B. \emph{On the Bernoulli nature
of systems with some hyperbolic structure}, Ergod. Th. Dynam. Sys.
\textbf{18} (1998), 441--456.
\bibitem{Sin70}
Sinai Ya. G.,  Dynamical systems with elastic reflections.
Ergodic properties of diepersing billiards [J]. {\em Russian Math.
Surveys,} {\bf 25} (1970) 137--189.
\bibitem{SC87}  Sinai Ya. G.  and  Chernov N., {\em Ergodic
properties of some systems of two-dimensional discs and
three-dimensional spheres }, Russian Math. Surveys {\bf 42}
(1987), 181--207.



\bibitem{Wo85} Wojtkowski M., Invariant families of cones and Lyapunov exponents, Ergod. Th. Dynam. Syst. {\bf{ 5}} (1985), 145--161.
	
\bibitem{Y98} Young L.-S.,  Statistical properties of systems
with some hyperbolicity including certain billiards [J]. {\em Ann.
Math.}, 1998, {\bf 147}, 585--650.
\bibitem{Z12}  Zhang H.-K. Free path for Lorentz gas with flat points. {\em Continuous and Discrete Dynamical Systems}, 2012, {\bf 32}, 4445-4466.

\end{thebibliography}
\end{document}